\documentclass[10pt]{amsart}

\usepackage[all]{xy}

\usepackage[normalem]{ulem}

\usepackage{enumitem}

\setlength{\textwidth}{6in}
\setlength{\hoffset}{-.5in}
\addtolength{\voffset}{-.3in}
\addtolength{\textheight}{.4in}

\usepackage{amsmath, amssymb, amsthm, latexsym, amscd}

\DeclareMathOperator{\HH}{H}
\DeclareMathOperator{\IHH}{IH}
\DeclareMathOperator{\IC}{IC}
\DeclareMathOperator{\II}{I}
\DeclareMathOperator{\Ext}{Ext}
\DeclareMathOperator{\HHom}{Hom}

\newtheorem{theorem}[subsection]{Theorem}
\newtheorem{prop}[subsection]{Proposition}
\newtheorem{lemma}[subsection]{Lemma}
\newtheorem{corollary}[subsection]{Corollary}

\newtheorem{conjecture}[subsection]{Conjecture}  

\theoremstyle{definition}
\newtheorem{define}[subsection]{Definition}
\newtheorem{notation}[subsection]{Notation}
\newtheorem{example}[subsection]{Example}

\newtheorem{remark}[subsection]{Remark}

\title{On the hyperplane conjecture for periods of Calabi-Yau hypersurfaces in $\mathbb P^n$}

\author{Bong H. Lian}
\address{Department of Mathematics, Brandeis University, Waltham MA 02454, U.S.A.}
\email{lian@brandeis.edu}

\author{Minxian Zhu}
\address{Yau Mathematical Sciences Center, Tsinghua University, Beijing 100084, P.R.China}
\email{mxzhu@math.tsinghua.edu.cn}

\begin{document}
\maketitle

\begin{abstract}
In [HLY1], Hosono, Lian, and Yau posed a conjecture characterizing the set of solutions to certain Gelfand-Kapranov-Zelevinsky hypergeometric equations which are realized as periods of Calabi-Yau hypersurfaces in a Gorenstein Fano toric variety $X$. We prove this conjecture in the case where $X$ is a complex projective space.
\end{abstract}

\section{Introduction}

Originating from physics, 
mirror symmetry drew the attention of mathematicians
when it was applied to derive astonishing predictions about the number of rational curves on the quintic threefold using the periods of another family of ``mirror" Calabi-Yau threefolds ([CdGP]).
This miracle led physicists and mathematicians to search for more such ``mirror pairs" of Calabi-Yau manifolds in order to better understand this duality phenomenon.

Soon after,
Batyrev constructed many such examples of mirror pairs in toric geometry
using the polar duality of lattice polytopes ([B2]).
More precisely, 
let $\triangle, \triangle^\vee$ be a pair of reflexive lattice polytopes, 
with $X = \mathbb P_\triangle$, 
$X^\vee = \mathbb P_{\triangle^\vee}$
being the corresponding projective toric varieties. 
For the purpose of this work, 
we assume that both $\triangle$ and $\triangle^\vee$ 
admit regular projective triangulations
(see Section 2), 
meaning that both $X^\vee$ and $X$ admit projective crepant toric resolutions.
Let $Y$ and $Y^\vee$ be $\triangle$-regular and $\triangle^\vee$-regular 
Calabi-Yau hypersurfaces in $X$ and $X^\vee$ respectively, 
with $\widetilde Y$ and $\widetilde{Y^\vee}$ being their resolutions
induced by the resolutions of the ambient spaces.
Batyrev conjectured that the two families of Calabi-Yau hypersurfaces 
$\widetilde Y$ and $\widetilde{Y^\vee}$ form a mirror pair.
The duality of Hodge numbers is one piece of supporting evidence
([BBo2]).

Batyrev observed that 
the period integrals of the $\widetilde Y$-family 
are solutions of the $\mathcal A$-hypergeometric system
introduced by Gelfand, Kapranov, and Zelevinsky, 
where $\mathcal A$ is the set of lattice points in $1 \times \triangle$, 
and with parameter $\beta=(1, 0)$
\footnote{We use the convention in [A].}
([B1], [GKZ]).
With applications to mirror symmetry in mind, 
Hosono, Lian, and Yau studied this semi-nonresonant 
\footnote{in the sense of [A]}
GKZ-system.
They proposed a certain compactification of the moduli space of $\widetilde Y$, 
that is, the toric variety associated to the secondary fan of $\triangle$.
On this compactification exist so-called maximal degeneracy points 
where all periods become singular except one 
([HLY2]).
These maximal degeneracy points correspond to the 
regular projective triangulations of $\triangle$.
Let $\mathcal T$ be such a triangulation, 
and let $\widetilde{X^\vee_{\mathcal T}}$ be the smooth projective toric variety associated to the refined fan $\Sigma_{\triangle, \mathcal T}$.
Hosono, Lian, and Yau constructed explicit solutions of the GKZ-system near this point, 
and they could be expressed as a function $B_{X, \mathcal T}(a)$ on the parameter space with values in the cohomology ring of $\widetilde{X^\vee_{\mathcal T}}$
([HLY1]).

However, not all solutions of the GKZ-system are periods of $\widetilde Y$. 
Even if we enlarge the GKZ-system by incorporating the infinitesimal action of the full automorphism group of $\widetilde X$ instead of just the torus action, 
the extended system could still have solutions which are not periods.
Hosono, Lian, and Yau computed many examples.
Based on empirical evidence, 
they conjectured that
the cup product of $B_{X, \mathcal T}(a)$ with the Calabi-Yau class 
$[\widetilde{Y^\vee_{\mathcal T}}]$ give precisely the set of periods for $\widetilde Y$
near the maximal degeneracy point determined by $\mathcal T$.

The main purpose of this paper is to prove the HLY conjecture for $X = \mathbb P^n$.
We will show in the appendix that the reflexive polytope $\triangle$ which defines $\mathbb P^n$ 
admits a regular projective triangulation.
It is obvious that the dual polytope $\triangle^\vee$ satisfies the same property.
Our proof relies on the work of [BHLSY], 
where it was proved that 
the period integrals of Calabi-Yau hypersurfaces in $\mathbb P^n$
are precisely the solutions of the extended GKZ-system, 
which in this case is obtained from the GKZ-system by adding differential operators corresponding to the root vectors of the simple Lie algebra $sl(n+1, \mathbb C)$.
First, 
we show that the coefficient functions in 
$B_{\mathbb P^n, \mathcal T}(a) \!\!\smile\!\! [\widetilde{Y^\vee_{\mathcal T}}]$
satisfy the extended GKZ-system, 
hence they are periods by [BHLSY].
Second, 
we show that 
the dimension of the space of coefficient functions in 
$B_{\mathbb P^n, \mathcal T}(a) \!\!\smile\!\! [\widetilde{Y^\vee_{\mathcal T}}]$, 
or equivalently the rank of the cupping map
$\!\smile\!\![\widetilde{Y^\vee_{\mathcal T}}]: \HH^\bullet(\widetilde{X^\vee_{\mathcal T}}, \mathbb Q)
\to \HH^\bullet(\widetilde{X^\vee_{\mathcal T}}, \mathbb Q)$, 
is equal to the rank of the period sheaf, 
which, in this case, 
is the dimension of the vanishing cohomology of a Calabi-Yau hypersurface $Y$ in $\mathbb P^n$.
Let $\pi^\vee: \widetilde{X^\vee_{\mathcal T}} \to X^\vee$ 
be the crepant resolution induced by $\mathcal T$.
Since $[\widetilde{Y^\vee_{\mathcal T}}] = (\pi^\vee)^* [Y^\vee]$, 
by a formal argument, 
the above rank can be computed on $X^\vee$; 
more precisely it is equal to the rank of 
$\!\smile\!\! [Y^\vee]: 
\mathbb H^\bullet(X^\vee, R \pi^\vee_* \underline{\mathbb Q}_{\widetilde{X^\vee_{\mathcal T}}}) \to 
\mathbb H^\bullet(X^\vee, R \pi^\vee_* \underline{\mathbb Q}_{\widetilde{X^\vee_{\mathcal T}}})$.
A toric version of the Beilinson-Bernstein-Deligne-Gabber decomposition theorem implies that
the derived pushforward 
$R \pi^\vee_* \underline{\mathbb Q}_{\widetilde{X^\vee_{\mathcal T}}}$
is isomorphic, 
in the constructible bounded derived category,
to a direct sum of intersection complexes of irreducible toric subvarieties of $X^\vee$ 
up to cohomological shifts. 
The multiplicities of the summands in the decomposition are combinatorial invariants
([dCMM]). 
Since $Y^\vee$ is ample, 
the hard Lefschetz theorem for intersection cohomology and the combinatorial tools in toric geometry allow us to compute the rank on $X^\vee$.

Though the conjecture is only proved for the complex projective spaces, 
some of the results we prove hold in general.
For example, 
we show that the coefficient functions in 
$B_{X, \mathcal T}(a) \!\!\smile\!\! [\widetilde{Y^\vee_{\mathcal T}}]$
are always solutions of the extended GKZ-system.
The decomposition theorem could be used to show that the rank of 
$\!\smile\!\! [\widetilde{Y^\vee_{\mathcal T}}]: \HH^\bullet(\widetilde{X^\vee_{\mathcal T}}, \mathbb Q)
\to \HH^\bullet(\widetilde{X^\vee_{\mathcal T}}, \mathbb Q)$
is, in general, 
equal to the dimension of the vanishing intersection cohomology of a $\triangle$-regular Calabi-Yau hypersurface $Y$ in $X$. 
Part of this calculation is implicit in [BoM], 
where the authors used different tools to study the cohomology ring of 
$\widetilde{Y^\vee_{\mathcal T}}$ and its mirror.
By a result of Mavlyutov, 
the rank above is, in fact, 
the dimension of the toric part of the cohomology of 
$\widetilde{Y^\vee_{\mathcal T}}$. 
We make a few remarks in Section 6 about how our work relates to [BoM].

The paper is organized as follows.
In Section 2, we briefly recall Batyrev's dual reflexive polytope construction and 
the semi-nonresonant GKZ-hypergeometric system relevant to our consideration. 
After we describe the explicit solutions of the GKZ-system at the large complex structure limit, 
we state the hyperplane conjecture and also our main result. 
Section 3 and 4 are devoted to the proof of the main theorem. 
In Section 3, 
we derive the extra differential operators that make up the extended GKZ-system
and show that the coefficient functions in 
$B_{X, \mathcal T}(a) \!\!\smile\!\! [\widetilde{Y^\vee_{\mathcal T}}]$
are their solutions.
In Section 4,
we recall some basic properties of the intersection cohomology 
of a complex algebraic variety and 
(a toric version of) the decomposition theorem. 
We show how to use the decomposition theorem 
to compute the number of linearly independent
period integrals predicted by the HLY conjecture
in the case of projective spaces.
In Section 5, 
we examine more closely the numbers of period integrals of Calabi-Yau hypersurfaces 
in a projective space 
with various leading terms
$1, \log(a_i), \log(a_i)\log(a_j), \ldots$
at the large complex structure limit, 
and match them with some combinatorially defined integers.
Section 6 is a generalization of Section 5
where we use the decomposition theorem to derive a general formula for the rank of 
$\smile\!\! [\widetilde{Y^\vee_{\mathcal T}}]: \HH^\bullet(\widetilde{X^\vee_{\mathcal T}}, \mathbb Q)
\to \HH^\bullet(\widetilde{X^\vee_{\mathcal T}}, \mathbb Q)$.
We also remark how our approach is related to the previous work of [BoM].
In the appendix, 
we show that the reflexive polytopes which define the complex projective spaces
admit regular projective triangulations.

{\bf Acknowledgement.} 
M. Zhu would like to thank Eduard Looijenga for valuable discussions and a course he gave at YMSC on topology of algebraic varieties, Shenghao Sun for his patience and generosity with answering questions on the decomposition theorem and perverse sheaves, and Lev Borisov for bringing the work of [BoM] to her attention. 
M. Zhu is partially supported by China NSF grant 11201255, China NSF grant 11531007, 
and a fellowship from the China Thousand Talents Program.
B.H. Lian would like to thank S. Hosono, A. Huang and S.-T. Yau for helpful discussions over the course of this work. 
He also thanks the Tsinghua YMSC for hospitality during his recent visits there.
His research is partially supported by an NSF FRG grant MS-1564405 and a Simons Foundation Collaboration Grant (2015-2019).

\section{Solutions of the GKZ-hypergeometric equations at LCSL}

Let $\triangle$ be a reflexive lattice polytope in $\mathbb R^n$, 
with dual polytope $\triangle^\vee$.
Let $\Sigma_\triangle$ be the fan consisting of cones over the faces of $\triangle$.
Equivalently, 
$\Sigma_\triangle$ is the normal fan of the dual polytope $\triangle^\vee$.
Define $\Sigma_{\triangle^\vee}$ similarly.
Let $X = \mathbb P_\triangle$ and $X^\vee = \mathbb P_{\triangle^\vee}$ 
be the projective toric varieties associated to the fans 
$\Sigma_{\triangle^\vee}$ and $\Sigma_\triangle$ respectively.
Both $X$ and $X^\vee$ are Gorenstein Fano toric varieties. 
The lattice points in $\triangle$ and $\triangle^\vee$ 
form bases for the global sections of the anticanonical line bundles 
on $X$ and $X^\vee$ respectively. 
That is, 
$$
\HH^0(X, -\omega_X) = \mathbb C[\triangle \cap \mathbb Z^n], \quad 
\HH^0(X^\vee, -\omega_{X^\vee}) = \mathbb C[\triangle^\vee \cap \mathbb Z^n].
$$
Let $Y$ be a Calabi-Yau hypersurface in $X$, 
defined by a section of $-\omega_X$, 
intersecting each toric orbit of $X$ transversely.
This is Batyrev's $\triangle$-regularity condition [B2]. 
It implies that the singularities of $Y$ are induced only by the singularities of $X$. 
Therefore, 
a resolution of singularities of $X$ yields a resolution of singularities 
of all $\triangle$-regular hypersurfaces.
Similarly, 
we have $\triangle^\vee$-regular Calabi-Yau hypersurfaces $Y^\vee$ in $X^\vee$. 
Let $\widetilde Y$ be a maximal projective crepant partial desingularization 
(MPCP-desingularization) of $Y$, 
similarly let $\widetilde{Y^\vee}$ be a MPCP-desingularization of $Y^\vee$. 
Batyrev conjectured that the Calabi-Yau families 
$\widetilde Y$ and $\widetilde{Y^\vee}$ 
are mirror to each other ([B2]).
This construction was later generalized by Batyrev and Borisov to Calabi-Yau complete intersections in $X$ and $X^\vee$ ([BBo1]).
They proved that the string-theoretic Hodge numbers ([BD]) of the conjectured mirror families satisfy the expected duality ([BBo2]).

Let $\mathcal T$ be a triangulation of $\triangle$ with lattice points as vertices such that the origin is a vertex of every maximal dimensional simplex in $\mathcal T$ and the resulting subdivision of the fan $\Sigma_\triangle$ gives rise to a smooth projective toric variety $\widetilde{X^\vee}$. 
We call $\mathcal T$ a \emph{regular projective triangulation} 
\footnote{Regular means unimodular, i.e. the simplices all have normalized volume 1; 
projective means regular in the sense of [GKZ], [HLY1], i.e.
there exists a strictly-convex $\mathcal T$-piecewise linear function on $\triangle$;
we require additionally that $0$ is the vertex of every maximal dimensional simplex in $\mathcal T$.}
of $\triangle$.
Such a triangulation may not always exist.
However, 
if it does exist, 
then $\widetilde{X^\vee}$ is a projective crepant resolution of $X^\vee$.
Let $\mathcal T^\vee$ be a regular projective triangulation of $\triangle^\vee$, 
and $\widetilde X$ be the corresponding projective crepant resolution of $X$.
Throughout the paper, 
we assume such triangulations exist for $\triangle$ and $\triangle^\vee$.
Note that we still have 
\begin{eqnarray}
\HH^0(\widetilde X, -\omega_{\widetilde X}) & = & \HH^0(X, -\omega_X) \quad = \quad \mathbb C[\triangle \cap \mathbb Z^n] 
\nonumber \\
\HH^0(\widetilde{X^\vee}, -\omega_{\widetilde{X^\vee}}) & = & \HH^0(X^\vee, -\omega_{X^\vee})  \quad = \quad \mathbb C[ \triangle^\vee \cap \mathbb Z^n].
\nonumber
\end{eqnarray}
Let $f(t) = \sum_{v_i \in \triangle \cap \mathbb Z^n} a_i t^{v_i}$ 
be a section of $-\omega_{\widetilde X}$
defining a smooth Calabi-Yau hypersurface $\widetilde Y_f$ in $\widetilde X$.
Then 
$$
\frac{\omega}{f} = \frac{ \frac{d t_1}{t_1} \wedge \frac{d t_2}{t_2} \wedge \cdots \wedge \frac{d t_n}{t_n}}{f(t_1, \ldots, t_n)}
$$
is a meromorphic $n$-form on $\widetilde X$ 
with a simple pole along $\widetilde Y_f$.
The Poincar\'{e} residue of $\frac{\omega}{f}$ then defines a 
nowhere-vanishing holomorphic $(n-1)$-form on $\widetilde Y_f$, 
which we denote by $\omega_f$.
Let $f$ vary and consider the family $\widetilde{\mathcal Y}: f \mapsto \widetilde Y_f$.
The cohomology groups $\HH^\bullet(\widetilde Y_f, \mathbb C)$ form a local system
with respect to the Gauss-Manin connection. 
The locally-defined functions 
$$
\Pi_\gamma(a): f \mapsto \int_{\gamma \in \HH_{n-1}(\widetilde Y_f, \mathbb C)} \omega_f, 
\quad \quad a = (a_i)_i
$$
are called \emph{period integrals}.
The subsheaf they generate over $\mathbb C$ is called the \emph{period sheaf}.

Period integrals satisfy the so-called Gelfand-Kapranov-Zelevinsky (GKZ) hypergeometric equations ([B1]).
Specifically, 
let $\mathcal A = \triangle \cap \mathbb Z^n = \{ v_0=0, v_1, \ldots, v_p \}$ 
be the set of integral points in $\triangle$, and 
let $L$ be the lattice of affine linear relations among them, i.e. 
\begin{eqnarray}
L & = & \{ (\ell_0, \ell_1, \ldots, \ell_p) \in \mathbb Z^{p+1}: \,\, \ell_0 v_0 + \ell_1 v_1 + \cdots + \ell_p v_p = 0 \text{    and    } \ell_0 + \ell_1 + \cdots + \ell_p = 0 \}
\nonumber
\end{eqnarray}
Then the period integrals satisfy the following GKZ-system of differential equations
\begin{eqnarray}
\left( \Pi_{\ell_i >0} \left( \frac{\partial}{\partial a_i} \right)^{\ell_i} - \Pi_{\ell_j <0} \left( \frac{\partial}{\partial a_j} \right)^{-\ell_j} \right) \,\, \Pi_\cdot (a) & = & 0 \quad ( \forall \ell \in L)
\label{D_l}
\\
\left( \sum_{i=1}^p v_i^j a_i \frac{\partial}{\partial a_i} \right) \,\, \Pi_\cdot (a) & = & 0 \quad (j=1, \ldots, n)
\label{t-invariance} 
\\
\left( \sum_{i=0}^p a_i \frac{\partial}{\partial a_i} + 1 \right) \,\, \Pi_\cdot (a) & = & 0
\label{Euler}
\end{eqnarray}
where $v_i = (v_i^1, \ldots, v_i^n) \in \mathbb Z^n$.

Not all solutions to the GKZ-equations are periods.
Note that the second equation (\ref{t-invariance}) 
is the $\mathfrak t$-invariance condition with respect to the action of the torus $T$ on 
$\widetilde X$ and $\HH^0(\widetilde X, -\omega_{\widetilde X})$.
One may add the invariance operators with respect to the infinitesimal action of the full automorphism group of $\widetilde X$; 
the enlarged system of differential equations is called the \emph{extended GKZ-system}
(see Section 3).
The period integrals are also solutions of the extended system.
However, the extended system is still not complete, in the sense that 
it could have solutions which are not periods.

After a close study of examples, 
Hosono, Lian, and Yau formulated a conjecture describing 
the period solutions of the GKZ-system
near the large complex structure limit (LCSL), 
where the Calabi-Yau hypersurface $\widetilde Y$ 
degenerates into the union of $T$-invariant divisors of $\widetilde X$, 
each with multiplicity one.
In fact, 
explicit solutions of the GKZ-system at the LCSL were constructed in [HLY1]. 
We will describe these solutions below and then state the conjecture.

Let $\mathcal T$ be a regular projective triangulation of $\triangle$, and 
$\Sigma_{\triangle, \mathcal T}$ be the corresponding subdivision of the fan $\Sigma_\triangle$.
Denote the set of $k$-dimensional cones in $\Sigma_{\triangle, \mathcal T}$ by 
$\Sigma_{\triangle, \mathcal T}(k)$.
Let $\widetilde{X^\vee} = \widetilde{X^\vee_{\mathcal T}}$
be the toric variety defined by the fan $\Sigma_{\triangle, \mathcal T}$.
Then $\pi^\vee: \widetilde{X^\vee} \to X^\vee$ 
is a projective crepant resolution of $X^\vee$.
Set
$\widetilde{Y^\vee} = \widetilde{Y^\vee_{\mathcal T}} = {\pi^\vee}^{-1} (Y^\vee)$. 
The Picard group of $\widetilde{X^\vee}$ fits into a short exact sequence
\begin{eqnarray}
& 0 \to \mathbb Z^n \to \mathbb Z^p \to \text{Pic} (\widetilde{X^\vee}) \to 0.& 
\nonumber
\end{eqnarray}
Indeed, 
$\mathbb Z^p$ is the free abelian group generated by the $T$-invariant Weil divisors 
$D_1, \ldots, D_p$ of $\widetilde{X^\vee}$.
Note that they correspond to the nonzero integral points $v_1, \ldots, v_p$ of $\triangle$.
The map $\mathbb Z^n \to \mathbb Z^p$ sends the $j$-th standard basis vector of $\mathbb Z^n$ to $\sum_{i=1}^p v_i^j D_i$.
It is convenient to enlarge the above short exact sequence to
\begin{eqnarray}
& 0 \to \mathbb Z^{n+1} \to \mathbb Z^{p+1} \to \text{Pic} (\widetilde{X^\vee}) \to 0. & 
\label{Picard}
\end{eqnarray}
This is understood as follows:
$\mathbb Z^{p+1}$ is the free abelian group generated by $D_0, D_1, \ldots, D_p$, where 
$D_0$ is an artificial divisor we add for $v_0=0 \in \triangle$, 
subject to the linear relation 
$D_0 + D_1 + \cdots + D_p = 0$. 
Thus, 
$D_0 = -\sum_{i=1}^p D_i$ is the canonical divisor of $\widetilde{X^\vee}$.

As a group, 
$\text{Pic} (\widetilde{X^\vee})$ is free of rank $p-n$, 
and isomorphic to $\HH^2(\widetilde{X^\vee}, \mathbb Z)$.
The lattice $L$ is naturally identified with 
$\text{Hom}_{\mathbb Z} (\text{Pic} (\widetilde{X^\vee}), \mathbb Z)$
via
$$\ell = (\ell_0, \ell_1, \ldots, \ell_p) \quad  \mapsto \quad (D_i \mapsto \ell_i, 0 \leq i \leq p).
$$
Let $\mathcal K \subset \HH^2(\widetilde{X^\vee}, \mathbb Z) \otimes_{\mathbb Z} \mathbb R$ 
be the K\"{a}hler cone of $\widetilde{X^\vee}$, 
and $\mathcal M = \mathcal K^\vee \subset L \otimes_{\mathbb Z} \mathbb R$ 
be the Mori cone.
It is known that $\mathcal M$ is generated by the primitive relations
of $\Sigma_{\triangle, \mathcal T}$
([CLS], Thereom 6.4.11).

\begin{define}
A subset $P = \{ v_{i_1}, v_{i_2}, \ldots, v_{i_k} \} \subset \mathcal A \backslash \{ 0 \} = \Sigma_{\triangle, \mathcal T} (1)$
is called a \emph{primitive} collection if $P$ is not contained in $\sigma(1)$ for any $\sigma \in \Sigma_{\triangle, \mathcal T}$, 
but any proper subset is.
\end{define}

\begin{lemma} 
[{[HLY2]}]
\label{primitive}
If $P = \{v_{i_1}, \ldots, v_{i_k} \}$ is a primitive collection, 
then there exists a unique simplex $\theta$ of $\mathcal T$ with vertex set
$\{ v_{j_1}, \ldots, v_{j_l}\} \subset \mathcal A\backslash \{ v_{i_1}, \ldots, v_{i_k} \}$ 
such that 
\begin{eqnarray}
v_{i_1} + \cdots + v_{i_k} & = &  c_1 v_{j_1} + \cdots + c_l v_{j_l}
\nonumber
\end{eqnarray}
for some $c_1, \ldots, c_l \in \mathbb Z_{>0}$ and $ c_1 + \cdots + c_l = k$.
\end{lemma}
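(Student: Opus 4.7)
My plan is to pass to an $(n{+}1)$-dimensional lift in which the degree condition $c_1 + \cdots + c_l = k$ is automatically encoded in a coordinate. I would embed $\mathbb Z^n \hookrightarrow \mathbb Z^{n+1}$ by $v \mapsto \widetilde v := (1, v)$, let $C(\triangle) := \mathbb R_{\geq 0}\cdot(\{1\}\times\triangle)$ be the cone over $\triangle$, and let $\widetilde \Sigma$ be the fan decomposition of $C(\triangle)$ whose cones are $\widetilde \theta := \mathrm{cone}\{\widetilde v : v \in V(\theta)\}$ as $\theta$ runs over the simplices of $\mathcal T$. The unimodularity hypothesis on $\mathcal T$ translates immediately to unimodularity of every cone $\widetilde\theta \in \widetilde\Sigma$, and $\widetilde\Sigma$ inherits the face-compatibility property because intersections of simplices of $\mathcal T$ lift to intersections of the corresponding cones.

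With $\widetilde w := \widetilde v_{i_1} + \cdots + \widetilde v_{i_k} = (k,\, v_{i_1}+\cdots+v_{i_k}) \in C(\triangle)$, I take $\widetilde\theta$ to be the unique cone of $\widetilde\Sigma$ whose relative interior contains $\widetilde w$, with rays $\widetilde v_{j_1},\ldots,\widetilde v_{j_l}$. Unimodularity then produces unique positive integers $c_1,\ldots,c_l$ with $\widetilde w = \sum_s c_s\widetilde v_{j_s}$; reading off the first and remaining coordinates simultaneously yields $\sum_s c_s = k$ and $v_{i_1}+\cdots+v_{i_k} = \sum_s c_s v_{j_s}$. The simplex $\theta \in \mathcal T$ corresponding to $\widetilde\theta$ is the candidate, and its uniqueness is immediate from the uniqueness of the cone of $\widetilde\Sigma$ containing $\widetilde w$ in its relative interior.

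The substantive step is to verify that $\{v_{j_1},\ldots,v_{j_l}\}\cap P = \emptyset$, which is where primitivity enters. Suppose for contradiction $v_{i_1} = v_{j_1}$. Since $c_1 \geq 1$, subtracting $\widetilde v_{i_1}$ from both expressions of $\widetilde w$ gives
\begin{equation*}
\widetilde v_{i_2} + \cdots + \widetilde v_{i_k} \;=\; (c_1 - 1)\widetilde v_{j_1} + c_2\widetilde v_{j_2} + \cdots + c_l\widetilde v_{j_l}.
\end{equation*}
The right-hand side exhibits this common point as an element of $\widetilde\theta$. On the other hand, by primitivity of $P$ the proper subset $\{v_{i_2},\ldots,v_{i_k}\}$ is contained in $\sigma(1)$ for some $\sigma \in \Sigma_{\triangle,\mathcal T}$, so the point also lies in the cone $\widetilde\mu \in \widetilde\Sigma$ whose rays are precisely $\widetilde v_{i_2},\ldots,\widetilde v_{i_k}$. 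Thus it lies in the common face $\widetilde\theta\cap\widetilde\mu$; since every $\widetilde v_{i_s}$ ($s\geq 2$) appears with positive coefficient, uniqueness of expression in the simplicial cone $\widetilde\mu$ forces $\widetilde\mu\subseteq\widetilde\theta$. Hence $v_{i_2},\ldots,v_{i_k} \in V(\theta)$ as well, so the whole of $P$ lies in $V(\theta)\setminus\{0\}$, which is the ray set of a single cone of $\Sigma_{\triangle,\mathcal T}$, contradicting primitivity of $P$.

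The main obstacle is really just spotting the correct framework: once one lifts to $\mathbb Z^{n+1}$, the degree-$k$ condition and the primitive relation are extracted simultaneously from a single unimodular expression, and the disjointness step becomes a mild variant of Batyrev's classical primitive-relation argument for smooth complete toric varieties.
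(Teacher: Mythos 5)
Your proof is correct and follows essentially the same strategy as the paper: locate the carrier simplex of (a rescaling of) the point $v_{i_1}+\cdots+v_{i_k}$, use unimodularity of $\mathcal T$ to force integrality of the barycentric coefficients, and derive a contradiction with primitivity via the proper subset $\{v_{i_2},\ldots,v_{i_k}\}$. The one presentational difference is that you pass to the Gorenstein cone $C(\triangle)\subset\mathbb R^{n+1}$, which encodes the degree constraint $\sum c_s = k$ in the zeroth coordinate and makes the integrality claim an immediate consequence of unimodularity of the lifted cone, whereas the paper works with the barycenter $\frac{1}{k}\sum v_{i_s}$ in $\triangle$ directly and establishes integrality by observing that $v_{j_2}-v_{j_1},\ldots,v_{j_l}-v_{j_1}$ form a $\mathbb Z$-basis of the lattice they rationally span; these two packagings are equivalent, but yours avoids the small amount of bookkeeping that the affine formulation requires.
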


This defines an element $\ell = (\ell_i)_i$ of $L$ with 
$\ell_{i_1} = \cdots = \ell_{i_k} = 1$ and 
$\ell_{j_m} = - c_m$ for $1 \leq m \leq l$.
We call such an $\ell$ 
a \emph{primitive relation} of the triangulation $\mathcal T$.
In particular, 
the coefficient of $v_0$ in any primitive relation is nonpositive.

\begin{proof}
Since $\triangle$ is convex, the point 
$\frac{v_{i_1} + \cdots + v_{i_k}}{k}$ lies in $\triangle$, and therefore lies in the relative interior of a unique simplex $\theta$ of $\mathcal T$ with vertices $\{v_{j_1}, \ldots, v_{j_l} \}$.
It follows that there exist $q_1, \ldots, q_l \in \mathbb Q_{>0}$ such that 
$\frac{v_{i_1} + \cdots + v_{i_k}}{k} = q_1 v_{j_1} + \cdots + q_l v_{j_l}$ 
and $q_1 + \cdots + q_l = 1$.
Multiplying with $k$, 
we get 
$v_{i_1} + \cdots + v_{i_k} = kq_1 v_{j_1} + \cdots + kq_l v_{j_l} =
k v_{j_1} + k q_2 (v_{j_2} - v_{j_1}) + \cdots + k q_l (v_{j_l} - v_{j_1})$.
Since $v_{i_1}, \ldots, v_{i_k}, v_{j_1}$ are integral, 
so is $k q_2 (v_{j_2} - v_{j_1}) + \cdots + k q_l (v_{j_l} - v_{j_1})$.
But the simplex $\theta$ is regular, 
which implies that the vectors $v_{j_2} - v_{j_1}, \ldots, v_{j_l} - v_{j_1}$ 
form a $\mathbb Z$-basis of the sublattice 
$\text{Span}_{\mathbb Q} (v_{j_2} - v_{j_1}, \ldots, v_{j_l} - v_{j_1}) \cap \mathbb Z^n$.
Hence,
$k q_2, \ldots, k q_l \in \mathbb Z$ and therefore $k q_1 = k - kq_2 - \cdots - k q_l \in \mathbb Z$.
Set $c_m = k q_m$, $1 \leq m \leq l$.

If $\{v_{i_1}, \ldots, v_{i_k} \} \cap \{ v_{j_1}, \ldots, v_{j_l} \} \neq \emptyset$,
say $v_{j_1} = v_{i_1}$, 
then, 
since $P$ is a primitive collection, 
$\{ v_{i_2}, \ldots, v_{i_k} \}$ is the vertex set of a simplex $\theta' \in \mathcal T$
with interior point
$\frac{ v_{i_2} + \cdots + v_{i_k} }{k-1}$.
On the other hand, 
$\frac{ v_{i_2} + \cdots + v_{i_k} }{k-1} = \frac{ (c_1-1) v_{j_1} + \cdots + c_l v_{j_l} }{k-1}$
lies in $\theta$, 
hence $\theta'$ is a face of $\theta$.
Since $\theta$ has $v_{i_1} = v_{j_1}$ as a vertex, 
it contradicts the assumption that $P$ is primitive.
\end{proof}

The K\"{a}hler cone $\mathcal K$ of $\widetilde{X^\vee}$
is a strongly convex maximal-dimensional cone in the \emph{secondary fan} 
$\mathcal{SF}(\triangle)$ of the polytope $\triangle$.
The secondary fan $\mathcal{SF}(\triangle)$ lies in 
$\HH^2(\widetilde{X^\vee}, \mathbb Z) \otimes_{\mathbb Z} \mathbb R 
\cong \mathbb R^{p+1} / \mathbb R^{n+1}$.
Its maximal-dimensional cones
correspond to the projective triangulations of $\triangle$. 
More precisely, 
given a weight vector $\omega = (\omega_0, \omega_1, \ldots, \omega_p) \in \mathbb R^{p+1}$, 
we may lift the integral point $v_i \in \mathcal A$ to the height $\omega_i$
and consider the convex hull $P_\omega$ of $(\omega_0, v_0), (\omega_1, v_1), \ldots, (\omega_p, v_p)$ 
in $\mathbb R^{n+1}$. 
For generic $\omega$, 
the lower envelope of $P_\omega$ induces a triangulation of $\triangle$ with vertices in $\mathcal A$.
Such a triangulation is called \emph{projective}.
The set of weight vectors inducing a fixed projective triangulation
are the interior points of a cone.
The cone contains the linear subspace $\mathbb R^{n+1}$, 
and its quotient in $\mathbb R^{p+1}/\mathbb R^{n+1}$ 
defines a cone in the secondary fan 
$\mathcal{SF} (\triangle)$.
It is under this correspondence that
the regular projective triangulation $\mathcal T$ of $\triangle$ yields the K\"{a}hler cone 
$\mathcal K_\mathcal T$ of 
$\widetilde{X^\vee_{\mathcal T}} = \mathbb P_{\Sigma_{\triangle, \mathcal T}}$. 

The secondary fan admits, as a refinement, the Gr\"{o}bner fan of the toric ideal
\begin{eqnarray}
\mathcal J_{\mathcal A} & = & \langle \prod_{i: \ell_i >0} y_i^{\ell_i} - \prod_{j: \ell_j <0} y_j^{-\ell_j} : \ell \in L \rangle 
\nonumber
\end{eqnarray}
in $\mathbb C[y_0, y_1, \ldots, y_p]$.
A generic weight vector $\omega \in \mathbb R^{p+1}$ 
defines a term order for $\mathcal J_{\mathcal A}$ 
such that the ideal generated by the leading terms 
$\text{LT}_\omega(\mathcal J_{\mathcal A}) := \langle \text{LT}_{\omega}(f) | f \in \mathcal J_{\mathcal A} \rangle$
is a monomial ideal. 
Here, the monomials $y_0^{\alpha_0} y_1^{\alpha_1} \cdots y_p^{\alpha_p}$ 
are ordered by their $\omega$-weights 
$\omega_0 \alpha_0 + \omega_1 \alpha_1 + \cdots + \omega_p \alpha_p$.
The equivalence class of weight vectors defining the same initial ideal 
yields a cone in the Gr\"{o}bner fan of $\mathcal J_{\mathcal A}$.
The K\"{a}hler cone
$\mathcal K_{\mathcal T}$ of $\mathbb P_{\Sigma_{\triangle, \mathcal T}}$ 
remains a cone in the Gr\"{o}bner fan of $\mathcal J_{\mathcal A}$. 
Indeed, 
a vector $\omega \in \mathcal K_{\mathcal T}^\circ$ 
defines a term order for $\mathcal J_{\mathcal A}$ 
with initial ideal $\text{LT}_{\omega} (\mathcal J_{\mathcal A})$ 
equal to the Stanley-Reisner ideal $\text{SR}_{\mathcal T}$ 
of the triangulation $\mathcal T$.
Recall that the Stanley-Reisner ideal of a triangulation $\mathcal T'$ 
is the ideal in $\mathbb C[y_0, y_1, \ldots, y_p]$ 
generated by the monomials $y_{i_1} y_{i_2} \cdots y_{i_m}$ 
for all $\{i_1, i_2, \ldots, i_m \} \notin \mathcal T'$.
Hence for $\omega \in \mathcal K_{\mathcal T}^\circ$, we have 
\begin{eqnarray}
\text{LT}_{\omega} (\mathcal J_{\mathcal A}) = \text{SR}_{\mathcal T} = 
\langle y_{i_1} \cdots y_{i_k} \,\, | \,\, P = \{ v_{i_1}, \ldots, v_{i_k} \} \,\, \text{a primitive collection of } \mathcal T \, \rangle.
\nonumber
\end{eqnarray}
In fact, the binomials 
\begin{eqnarray}
y_{i_1} \cdots y_{i_k} - y_{j_1}^{c_1} \cdots y_{j_l}^{c_l}, \quad \quad  \{v_{i_1}, \ldots, v_{i_k} \}  \text{  a primitive collection of $\mathcal T$}, 
\label{Grobner basis}
\end{eqnarray}
defined by the primitive relations in Lemma \ref{primitive}
form a Gr\"{o}bner basis of $\mathcal J_{\mathcal A}$ 
with respect to the term order $\omega \in \mathcal K_{\mathcal T}^\circ$.
In general,
if a weight vector $\omega$ defines a term order for the toric ideal $\mathcal J_{\mathcal A}$, 
it also induces a projective triangulation $\mathcal T_{\omega}$ of $\triangle$.
Furthermore,
the Stanley-Reisner ideal of $\mathcal T_\omega$ 
is equal to the radical of the initial ideal 
$\text{LT}_\omega(\mathcal J_{\mathcal A})$.
(See [St] for more details.)

The toric variety associated to the secondary fan $\mathcal{SF}(\triangle)$ is a compactification of
the $(p-n)$-dimensional torus 
$\HH^2(\widetilde{X^\vee}, \mathbb Z) \otimes_{\mathbb Z} \mathbb C^* 
\cong (\mathbb C^*)^{p+1}/ (\mathbb C^*)^{n+1}$.
The K\"{a}hler cone $\mathcal K_{\mathcal T}$, 
not necessarily regular or even simplicial, 
defines in it an affine toric subvariety. 
If $\mathcal K_{\mathcal T}$ is not regular, 
then we subdivide it into regular cones.
This yields a toric variety birational to the old one.
Any cone $\tau$ in the subdivision of $\mathcal K_{\mathcal T}$ now 
defines a smooth affine toric subvariety $U_\tau$.
Next, we construct a system of PDEs on $U_\tau$
which is equivalent to the GKZ-system
and discuss its solutions near the unique fixed point $x_\tau$ of $U_\tau$.

The dual cone $\tau^\vee$, containing the Mori cone $\mathcal M = \mathcal K^\vee$, 
is generated by a $\mathbb Z$-basis 
$l^{(1)}, \ldots, l^{(p-n)}$ of the lattice $L$.
The corresponding monomials $x_j = \prod_{i=0}^p a_i^{l^{(j)}_i}$, $1 \leq j \leq p-n$, 
give a set of smooth coordinates on $U_\tau$.
If $s(a)$ satisfies (2) and (3), 
then $a_0 s(a)$, invariant with respect to the $(\mathbb C^*)^{n+1}$-action (2) and (3) stipulate, 
is really a function of $x_1, \ldots, x_{p-n}$.
As for the differential operators in (1), 
we identify the toric ideal 
$\mathcal J_{\mathcal A} = \langle y^{\ell_+} - y^{\ell_-}: \ell \in L \rangle$ 
in $\mathbb C[y_0, y_1, \ldots, y_p]$
with the ideal generated by 
$\square_\ell = \prod_{i: \ell_i >0} \left( \frac{\partial}{\partial a_i} \right)^{\ell_i} - \prod_{j: \ell_j <0} \left( \frac{\partial}{\partial a_j} \right)^{-\ell_j}$
in $\mathbb C[\frac{\partial}{\partial a_0}, \frac{\partial}{\partial a_1}, \ldots, \frac{\partial}{\partial a_p}]$
for all $\ell \in L$.
Since (\ref{Grobner basis}) is a Gr\"{o}bner basis of $\mathcal J_{\mathcal A}$, 
in particular they generate $\mathcal J_{\mathcal A}$ as an ideal, 
so it is sufficient to consider the $\square_\ell$ for primitive relations. 
From here, 
it is not difficult to write down the corresponding differential operators in $x_1, \ldots, x_{p-n}$
with polynomial coefficients that annihilate $a_0 s(a)$. 
See [HLY1, 3.3] for more details. 

The point $x_\tau \in U_\tau$ where $x_1 = \cdots = x_{p-n} = 0$ 
is the large complex structure limit.
It was discovered that the solutions of the GKZ-system near $x_\tau$ 
are always supported on the Mori cone $\mathcal M$
regardless of which $\tau$ we pick in the subdivision of $\mathcal K_{\mathcal T}$.
In fact, there is an explicit formula. 
In order to describe the formula, 
we consider the cohomology ring of the smooth projective toric variety 
$\widetilde{X^\vee_{\mathcal T}} = \mathbb P_{\Sigma_{\triangle, \mathcal T}}$.
Let $D_1, \ldots, D_p$ be the $T$-invariant divisors of 
$\mathbb P_{\Sigma_{\triangle, \mathcal T}}$ 
corresponding to the integral points 
$v_1, \ldots, v_p$ of $\triangle$, 
and let $D_0$ be the artificial divisor. 
Then the cohomology ring of $\mathbb P_{\Sigma_{\triangle, \mathcal T}}$
is equal to 
\begin{eqnarray}
\HH^\bullet(\mathbb P_{\Sigma_{\triangle, \mathcal T}}, \mathbb Z) & = & 
\mathbb Z[ D_0, D_1, \ldots, D_p] / \mathcal I
\nonumber
\end{eqnarray}
where the ideal $\mathcal I$ is generated by

(a) the linear equivalence relations $D_0 + D_1 + \cdots + D_p =0$ and  
$\sum_{i=1}^p v_{i, j} D_i = 0$ for $1 \leq j \leq n$, 

\noindent
and

(b) the Stanley-Reisner ideal 
$$\langle D_{i_1} D_{i_2} \cdots D_{i_k} \,\, | \,\, 
\{ i_1, \ldots, i_k \} \subset \{1, 2, \ldots, p \} \text{  and   } v_{i_1}, \ldots, v_{i_k} \text{ do not generate a cone in   } \Sigma_{\triangle, \mathcal T} \rangle.
$$
([CLS, Theorem 12.4.4]).
Let $M = \mathcal M \cap \mathbb Z^{p+1}$ be the set of integral points 
in the Mori cone $\mathcal M$. 
Recall that the Mori cone is generated by primitive relations, 
and each primitive relation 
$\ell = (\ell_0, \ell_1, \ldots, \ell_p)$ has $\ell_0 \leq 0$, 
therefore all $\ell \in M$ satisfy this condition. 

\begin{theorem} 
[{[HLY1]}]
\label{solutions of GKZ}
The solutions to the GKZ-system on $U_\tau$ near the LCSL $x_\tau$ 
can be expressed as a vector-valued function 
\begin{eqnarray}
B_{X, \mathcal T}(a) 
& = & \frac{1}{a_0} \left(
\sum_{\ell = (\ell_0, \ell_1, \ldots, \ell_p) \in M}
O_\ell \,\,\,\, 
\prod_{i=0}^p a_i^{\ell_i} \right) \,\,\,\, 
\text{exp} \, \left( \sum_{i=0}^p (\log a_i) D_i \right)
\nonumber
\end{eqnarray}
with values in the cohomology ring
$\HH^\bullet(\widetilde{X^\vee_{\mathcal T}}, \mathbb C)$
where 
\begin{eqnarray}
O_\ell & = & \frac{ \prod_{i=1, \ell_i <0}^p D_i (D_i-1) \cdots (D_i +\ell_i +1)}
{\prod_{i=1, \ell_i \geq 0}^p (D_i +1) (D_i+2) \cdots (D_i +\ell_i)} 
(D_0-1)(D_0-2)\cdots (D_0 + \ell_0).
\nonumber
\end{eqnarray}
This is understood as follows:
for any $\alpha$ in the dual space of 
$\HH^\bullet(\widetilde{X^\vee_{\mathcal T}}, \mathbb C)$, 
the natural pairing 
$\langle B_{X, \mathcal T}(a), \alpha \rangle$ is a solution of the GKZ-system, 
moreover all solutions of the GKZ-system arise this way.
The $(D_i + k)$-term on the denominator of $O_\ell$ 
is understood as the geometric expansion in $D_i$.
\end{theorem}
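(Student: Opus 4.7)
The plan is to verify directly that the pairing $\langle B_{X,\mathcal{T}}(a), \alpha\rangle$ satisfies each of the three types of GKZ equations (\ref{D_l})--(\ref{Euler}) for every functional $\alpha$ on $\HH^\bullet(\widetilde{X^\vee_{\mathcal{T}}}, \mathbb{C})$, and then to match dimensions. Write $B_{X,\mathcal{T}}(a) = a_0^{-1}\bigl(\sum_{\ell \in M} O_\ell\, a^\ell\bigr)\, a^D$ where $a^D := \prod_i a_i^{D_i}$ is understood via the commuting action of the nilpotent classes $D_i$, and treat all $D_i$-manipulations modulo the ideal $\mathcal{I}$ of linear-equivalence relations (a) and Stanley-Reisner relations (b).

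The easy cases are (\ref{t-invariance}) and (\ref{Euler}). Applying $\sum_{i=1}^p v_i^j a_i\partial_{a_i}$ to $a^D$ produces the cohomology class $\sum_i v_i^j D_i$, which vanishes by relation (a); applied to $a^\ell$ it produces $\sum_i v_i^j \ell_i$, which vanishes because $\ell \in L$. The $a_0^{-1}$ prefactor is untouched since $v_0 = 0$. Similarly, the Euler operator $\sum_i a_i \partial_{a_i}+1$ produces $-1 + \sum_i \ell_i + \sum_i D_i + 1$; the two middle sums vanish by $\ell \in L$ and by (a) respectively, and the constants cancel.

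The substantive step is checking the box equations (\ref{D_l}). By the Gr\"obner-basis description (\ref{Grobner basis}), it suffices to verify $\square_\ell B_{X,\mathcal{T}} = 0$ when $\ell$ is a primitive relation of $\mathcal{T}$. Using the identity
\[
\left(\tfrac{\partial}{\partial a_i}\right)^{k}\!\bigl(a_i^{m+D_i}\bigr) = (m+D_i)(m+D_i-1)\cdots(m+D_i-k+1)\,a_i^{m+D_i-k},
\]
both $\prod_{\ell_i>0}\partial_{a_i}^{\ell_i}$ and $\prod_{\ell_j<0}\partial_{a_j}^{-\ell_j}$ act monomial-by-monomial on the series $\sum_\mu O_\mu a^\mu a^D$ as a shift in the index $\mu$, and after a re-indexing they produce formally the same monomials. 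The Pochhammer/falling-factorial structure of $O_\ell$ is engineered so that the two resulting coefficients agree term-by-term as elements of the polynomial ring in the $D_i$. What makes the cancellation consistent with restricting the sum to $M \subset L$ rather than all of $L$ is that any ``missing" term (one whose re-indexed exponent leaves the Mori cone) carries a coefficient divisible by a product $D_{i_1}\cdots D_{i_k}$ over a primitive collection, hence vanishes in $\HH^\bullet(\widetilde{X^\vee_{\mathcal{T}}}, \mathbb{C})$ by the Stanley-Reisner ideal (b). This is where relation (b) enters essentially.

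For surjectivity onto the full solution space, invoke that the holonomic rank of the semi-nonresonant GKZ system with parameter $(1,0)$ equals the normalized volume of $\triangle$; since $\mathcal{T}$ is unimodular, this volume equals the number of maximal simplices of $\mathcal{T}$, which equals the number of top-dimensional cones of $\Sigma_{\triangle,\mathcal{T}}$, i.e.\ $\chi(\widetilde{X^\vee_{\mathcal{T}}}) = \dim_{\mathbb{C}}\HH^\bullet(\widetilde{X^\vee_{\mathcal{T}}}, \mathbb{C})$. The map $\alpha \mapsto \langle B_{X,\mathcal{T}}(a), \alpha\rangle$ is injective by inspecting the leading logarithmic terms coming from the $a^D$ factor (distinct elements of a basis dual to a homogeneous basis of cohomology produce distinct $\log$-polynomial leading behavior at $x_\tau$), so dimensions force surjectivity. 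The main obstacle is the bookkeeping in the box-operator step: reconciling the two shifted series against the precise poles and zeros of $O_\ell$, and recognizing the residual discrepancy as a sum over primitive collections that is killed by Stanley-Reisner.
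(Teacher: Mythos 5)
Your proposal follows essentially the same route as the paper: verify (\ref{t-invariance}) and (\ref{Euler}) from the linear-equivalence relations, use the falling-factorial identity $O_\ell(D_i+\ell_i)=O_{\ell-e_i}$ to turn $\square_\ell$ into a re-indexing of the series so that the difference is supported on residual terms whose $O_d$-coefficients carry a Stanley--Reisner monomial, and then close by matching the holonomic rank $n!\,\mathrm{vol}(\triangle)$ of the GKZ system against $\chi(\widetilde{X^\vee_{\mathcal T}})=\dim\HH^\bullet(\widetilde{X^\vee_{\mathcal T}},\mathbb C)$. The one step you assert rather than prove---that each residual term's coefficient is divisible by a primitive-collection monomial---is filled in the paper by picking $\omega$ in the interior of the K\"ahler cone with $\langle\gamma,\omega\rangle<0$ for $\gamma\notin M$ and using the Gr\"obner term order to locate a primitive collection inside the negative support of $\gamma$, which is routine once set up, so there is no essential gap.
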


\begin{proof}
First, we prove that $B_{X, \mathcal T}(a)$ satisfies the GKZ-equations 
(\ref{D_l})(\ref{t-invariance})(\ref{Euler}).
Equations (\ref{t-invariance})(\ref{Euler}) are easy to verify 
using the linear equivalence relations 
that $D_i, 0 \leq i \leq p$, satisfy
because
$\ell \in M \subset L$. 
For (\ref{D_l}), note that for $1 \leq i \leq p$, one has 
\begin{eqnarray}
\frac{\partial}{\partial a_i} B_{X, \mathcal T}(a)
& = & 
\frac{1}{a_0}
\left( \sum_{\ell \in M} O_\ell \frac{\ell_i}{a_i} a^\ell \right) 
\text{exp} \left(\sum_{i=0}^p (\log a_i) D_i \right)
+ \frac{1}{a_0} 
\left( \sum_{\ell \in M} O_\ell a^\ell \right)
\text{exp}  \left( \sum_{i=0}^p (\log a_i) D_i \right)
\frac{D_i}{a_i}
\nonumber \\
& = & 
\frac{1}{a_0} 
\left( \sum_{\ell \in M} O_\ell \frac{D_i + \ell_i}{a_i} a^\ell \right)
\text{exp} \left( \sum_{i=0}^p (\log a_i) D_i \right)
\nonumber
\end{eqnarray}
where 
$a^\ell = \prod_{i=0}^p a_i^{\ell_i}$.
For convenience, 
we extend the definition of $O_\ell$ to $O_d$
for $d \in \mathbb Z^{p+1}$ with $d_0 \leq 0$, 
and also the definition of $a^\ell$ to $a^d$.
A case-by-case analysis, depending on whether $\ell_i$ is positive, negative, or zero, 
reveals that 
$O_\ell (D_i+ \ell_i) = O_{\ell -e_i}$, 
where $e_i$ is the $i$-th standard basis vector of $\mathbb Z^{p+1}$.
Since $\frac{1}{a_i} a^\ell = a^{\ell - e_i}$, 
we have
\begin{eqnarray}
\frac{\partial}{\partial a_i} B_{X, \mathcal T}(a)
& = &
\frac{1}{a_0} \left( \sum_{d \in M-e_i} O_d a^d \right)
\text{exp} \left( \sum_{i=0}^p (\log a_i) D_i \right).
\label{derivative}
\end{eqnarray}
A similar computation shows that 
(\ref{derivative}) also holds for $i =0$.
Now taking $\ell \in L$, 
we have 
\begin{eqnarray}
\square_\ell  B_{X, \mathcal T}(a) 
& = & 
\left( \Pi_{i: \ell_i >0} \left( \frac{\partial}{\partial a_i} \right)^{\ell_i} - \Pi_{j: \ell_j <0} \left( \frac{\partial}{\partial a_j} \right)^{-\ell_j} \right) B_{X, \mathcal T}(a) 
\nonumber \\
& = & 
\frac{1}{a_0} \left( \sum_{d \in M - \sum_{i: \ell_i>0} \ell_i e_i} O_d a^d - 
\sum_{d \in M - \sum_{j: \ell_j <0} (- \ell_j) e_j} O_d a^d \right)
\text{exp} \left( \sum_{i=0}^p (\log a_i) D_i \right).
\nonumber
\end{eqnarray}
Denote 
$\ell^+ = \sum_{i: \ell_i >0} \ell_i e_i$ and 
$\ell^- = \sum_{j: \ell_j <0} -\ell_j e_j$, 
then $\ell = \ell^+ - \ell^-$ and 
$M - \ell^- = M - \ell^+ + \ell$.
Comparing the ranges of the two summations, 
we have
\begin{eqnarray}
\square_\ell  B_{X, \mathcal T}(a) & = &
\frac{1}{a_0} \left( \sum_{d \in (M \backslash M + \ell) - \ell^+} O_d a^d 
- \sum_{d \in (M+\ell \backslash M) - \ell^+}  O_d a^d \right)
\text{exp} \left( \sum_{i=0}^p (\log a_i) D_i \right).
\nonumber
\end{eqnarray}
For any $d \in (M \backslash M+ \ell) - \ell^+$, 
let $\gamma = d + \ell^+ - \ell = d + \ell^-$; 
then $\gamma \in L \backslash M$.
Since $\gamma \notin M$, 
there exists $\omega$ in the interior of the K\"{a}hler cone $\mathcal K$ such that 
$\langle \gamma, \omega \rangle <0$.
Write $\gamma = \gamma^+ - \gamma^-$. 
Then, with respect to the term order defined by $\omega$, 
the binomial $y^{\gamma^+} - y^{\gamma^-}$ in the toric ideal $\mathcal J_{\mathcal A}$
has leading term $y^{\gamma^-}$.
Hence, 
there exists a primitive collection 
$\{ v_{i_1}, \ldots, v_{i_k} \}$ of $\mathcal T$ such that 
$y^{\gamma^-}$ is divisible by $y_{i_1} \cdots y_{i_k}$.
In particular 
$\{ i_1, \ldots, i_k \} \subset \{ i: \gamma_i <0, 1\leq i \leq p \}$.
Since $\ell^-$ has nonnegative coordinates and $d = \gamma - \ell^-$, 
it follows that 
$\{ i_1, \ldots, i_k \} \subset \{i: d_i <0, 1 \leq i \leq p \}$.
For each $i$ such that $d_i<0$ and $1 \leq i \leq p$, 
one has a corresponding $D_i$ in the numerator of $O_d$.
Hence, 
$O_d$ contains $D_{i_1} \cdots D_{i_k}$ in its numerator, 
but $D_{i_1} \cdots D_{i_k}$ is equal to $0$ in 
$\HH^\bullet(\widetilde{X^\vee_{\mathcal T}}, \mathbb Z)$.
For $d \in (M+\ell \backslash M) - \ell^+$, 
the same argument applies replacing $\gamma$ by $d + \ell^+$.

The above shows that if we expand $B_{X, \mathcal T}(a)$ 
with respect to a $\mathbb C$-basis of 
$\HH^\bullet(\widetilde{X^\vee_{\mathcal T}}, \mathbb C)$, 
then the coefficients are solutions of the GKZ-equations.
It is not hard to see that 
these coefficient functions are also linearly independent, 
so we obtain $(\dim \HH^\bullet(\widetilde{X^\vee_{\mathcal T}}, \mathbb C))$-dimension of 
linearly independent solutions.
Since $\widetilde{X^\vee_{\mathcal T}}$ is complete and smooth, 
we have $\dim \HH^\bullet(\widetilde{X^\vee_{\mathcal T}}, \mathbb C) 
= \chi(\widetilde{X^\vee_{\mathcal T}})
= | \Sigma_{\triangle, \mathcal T} (n) |$
([CLS, Theorem 12.3.9, Theorem 12.3.11]).
Since $\triangle$ is reflexive and the triangulation $\mathcal T$ is regular, 
the number of $n$-dimensional cones in $\Sigma_{\triangle, \mathcal T}$ is equal to the normalized volume of $\triangle$.
To complete the proof of the theorem, 
it suffices to show that the holonomic D-module 
associated to the GKZ-hypergeometric system 
has rank $(n!) \text{vol}(\triangle)$.
Since we assumed that $\mathcal T$ is a regular triangulation,  
the lattice polytope $\triangle$ is normal ([CLS, Definition 2.2.9]), 
hence $1 \times (\triangle \cap \mathbb Z^n)$ generates the semigroup
$C(\triangle) \cap \mathbb Z^{n+1}$
where $C(\triangle)$ is the cone in $\mathbb R^{n+1}$ supported by $1 \times \triangle$. 
It remains to use [A, Corollary 5.11].
\end{proof}

We can now state the \emph{hyperplane conjecture} due to Hosono, Lian, and Yau in 1996.

\begin{conjecture} 
[{[HLY1]}]
\label{hyperplane}
Let $[\widetilde{Y^\vee_{\mathcal T}}] = -D_0$ be the Calabi-Yau class in 
$\HH^2(\widetilde{X^\vee_{\mathcal T}}, \mathbb C)$.
Then $B_{X, \mathcal T} (a) \cdot (-D_0)$
gives the complete set of period integrals of Calabi-Yau hypersurfaces $\widetilde{Y}_f$ in $\widetilde{X}$ near the LCSL.
\end{conjecture}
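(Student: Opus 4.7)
The plan is to establish the conjecture for $X = \mathbb{P}^n$ by proving two complementary facts: first, that every coefficient function appearing in $B_{\mathbb{P}^n,\mathcal{T}}(a) \smile (-D_0)$ is a period integral of some $\widetilde{Y}_f$; and second, that these coefficient functions span a space whose dimension equals the rank of the period sheaf. Combined with the obvious fact that any element of $B_{\mathbb{P}^n,\mathcal{T}}(a) \smile (-D_0)$ still solves the GKZ-system (since by Theorem \ref{solutions of GKZ} its components are $\mathbb{C}$-linear combinations of GKZ-solutions), this will identify the image of the cupping map with precisely the period sheaf near the LCSL.

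For the first step, I would invoke the main result of [BHLSY]: for $X = \mathbb{P}^n$ the periods of smooth $\triangle$-regular Calabi-Yau hypersurfaces are exactly the solutions of the \emph{extended} GKZ-system, obtained by adjoining differential operators that correspond to the root vectors of $sl(n+1,\mathbb{C})$ acting on $\HH^0(\widetilde X, -\omega_{\widetilde X})$. It therefore suffices to verify that $B_{\mathbb{P}^n, \mathcal{T}}(a) \smile (-D_0)$ is annihilated by these extra operators. I would carry this out in Section 3 by writing the root-vector operators down explicitly, applying them term-by-term to the series of Theorem \ref{solutions of GKZ}, and exhibiting cancellations in the cohomology ring $\HH^\bullet(\widetilde{X^\vee_{\mathcal{T}}}, \mathbb{C})$ coming from the extra factor $D_0$. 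The mechanism should parallel the Stanley-Reisner cancellations used in the proof of Theorem \ref{solutions of GKZ}, but now the additional factor $D_0$ supplies the extra monomial vanishing needed to kill the terms indexed by the root vectors.

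For the second step I would compute the rank of $\smile (-D_0): \HH^\bullet(\widetilde{X^\vee_{\mathcal{T}}},\mathbb{Q}) \to \HH^\bullet(\widetilde{X^\vee_{\mathcal{T}}},\mathbb{Q})$ and show it equals the dimension of the vanishing cohomology of a smooth Calabi-Yau hypersurface in $\mathbb{P}^n$, i.e.\ the rank of the period sheaf. Since $-D_0 = [\widetilde{Y^\vee_{\mathcal{T}}}] = (\pi^\vee)^* [Y^\vee]$ and $\pi^\vee$ is proper and birational, a formal argument using the projection formula reduces the computation to the rank of $\smile [Y^\vee]$ acting on $\mathbb{H}^\bullet(X^\vee, R\pi^\vee_* \underline{\mathbb{Q}}_{\widetilde{X^\vee_{\mathcal{T}}}})$. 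A toric version of the Beilinson-Bernstein-Deligne-Gabber decomposition theorem (as treated in [dCMM]) then splits $R\pi^\vee_*\underline{\mathbb{Q}}_{\widetilde{X^\vee_{\mathcal{T}}}}$ into a direct sum of shifted intersection complexes $\IC_V$ for irreducible toric subvarieties $V \subseteq X^\vee$, with combinatorially computable multiplicities. Because $Y^\vee$ is ample on $X^\vee$, hard Lefschetz for intersection cohomology applies to each summand, and restricting $Y^\vee$ to each closed torus orbit $V$ reduces the rank calculation to a finite sum indexed by faces of $\triangle^\vee$, with entries read off from $g$-polynomials of the faces.

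The main obstacle I anticipate is the final combinatorial matching in step two: identifying this face-indexed sum with the dimension of the vanishing cohomology of a smooth Calabi-Yau hypersurface in $\mathbb{P}^n$. On one side, the vanishing cohomology is computed by the classical Griffiths-Dolgachev Jacobian-ring formula; on the other, one has a sum over faces of $\triangle^\vee$ mixing BBDG multiplicities with intersection Betti numbers of toric varieties, controlled by Stanley's $g$-theorem and Danilov-type combinatorics. The explicit form of the pair $(\triangle, \triangle^\vee)$ defining $\mathbb{P}^n$ will be essential to force agreement of the two expressions, which is exactly where the restriction to $\mathbb{P}^n$ stops being cosmetic: for a general Gorenstein Fano toric $X$ the two numbers need not match, and in that setting (as foreshadowed by the remarks pointing toward [BoM]) the rank computes the toric part of the cohomology of $\widetilde{Y^\vee_{\mathcal{T}}}$ rather than the full vanishing cohomology.
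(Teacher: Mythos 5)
Your two-step plan is exactly the paper's: (i) use [BHLSY] to reduce to checking that $B_{\mathbb{P}^n,\mathcal{T}}(a)\smile(-D_0)$ satisfies the extended GKZ-system, proved by applying the root-vector operators $\pounds_v$ term-by-term and exploiting Stanley–Reisner cancellations together with the extra $D_0$ factor; (ii) reduce the rank of the cupping map to $X^\vee$ via adjunction, decompose $R\pi^\vee_*\underline{\mathbb{Q}}$ using the toric decomposition theorem of [dCMM], and apply hard Lefschetz for intersection cohomology since $Y^\vee$ is ample. One conceptual slip worth flagging: your final paragraph misattributes where the restriction to $\mathbb{P}^n$ is essential --- the rank computation in step (ii) actually works in full generality (Theorem~\ref{general formula} shows the rank always equals $\dim\IHH^{n-1}_{\mathrm{van}}(Y,\mathbb{Q})$, not merely a toric proxy that happens to agree for $\mathbb{P}^n$); what genuinely requires $X=\mathbb{P}^n$ is the [BHLSY] completeness result for the extended GKZ-system used in step (i), which is not known in general.
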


\begin{remark}
For any linear function $\alpha$ on $\HH^\bullet(\widetilde{X^\vee_{\mathcal T}}, \mathbb C)$, 
the pairing 
$\langle B_{X, \mathcal T} (a) \cdot (-D_0),  \alpha \rangle$ is equal to 
$\langle B_{X, \mathcal T} (a), (-D_0)^\vee \alpha \rangle$, 
where $(-D_0)^\vee \alpha$ is the pullback of $\alpha$ along the map
\begin{eqnarray}
(-D_0) \cdot & : & 
\HH^\bullet(\widetilde{X^\vee_{\mathcal T}}, \mathbb C) \to \HH^\bullet(\widetilde{X^\vee_{\mathcal T}}, \mathbb C).
\label{cupping}
\end{eqnarray}
Hence, the coefficient functions in $B_{X, \mathcal T} (a) \cdot (-D_0)$ 
comprise a subspace of the coefficient functions in $B_{X, \mathcal T} (a)$.
They are obtained by pairing $B_{X, \mathcal T} (a)$ with linear functions $\alpha$ 
which vanish on the kernel of the cupping map (\ref{cupping}). 
Therefore, the dimension of the space of coefficient functions in 
$B_{X, \mathcal T} (a) \cdot (-D_0)$
is equal to the rank of the map (\ref{cupping}).
\end{remark}

The main result of this paper is the following:

\begin{theorem} \label{main}
Conjecture \ref{hyperplane} holds for $X = \mathbb P^n$.
\end{theorem}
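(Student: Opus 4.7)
The plan is to prove Theorem \ref{main} by establishing two separate claims and combining them: first, that every coefficient function appearing in the expansion of $B_{\mathbb P^n, \mathcal T}(a) \smile (-D_0)$ is a period of a Calabi-Yau hypersurface $\widetilde Y_f$ in $\mathbb P^n$; second, that the number of linearly independent such coefficient functions equals the rank of the full period sheaf of the family. Together these force equality of the two spaces of functions and thus yield Conjecture \ref{hyperplane} for $X = \mathbb P^n$.

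For the inclusion, I would appeal to the main theorem of [BHLSY], which identifies the period integrals of Calabi-Yau hypersurfaces in $\mathbb P^n$ exactly with the solutions of the extended GKZ-system, where the extra operators come from the root vectors of $sl(n+1,\mathbb C)$ acting infinitesimally on sections of $-\omega_{\widetilde X}$. From Theorem \ref{solutions of GKZ} the coefficients of $B_{\mathbb P^n, \mathcal T}(a)$ already solve the standard GKZ-system, so what remains is to write down the root-vector operators explicitly and to check that, after cupping with $-D_0$, they annihilate the coefficients. The key observation is that while the extra operators need not kill $B_{\mathbb P^n, \mathcal T}(a)$ itself, the ``error term'' they produce lies in the kernel of $\smile (-D_0)$ in $\HH^\bullet(\widetilde{X^\vee_{\mathcal T}},\mathbb C)$. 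Carrying out this verification is the content of Section~3, and it generalizes to arbitrary Gorenstein Fano $X$.

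For the dimension match, by the remark following Conjecture \ref{hyperplane}, the number of linearly independent coefficient functions of $B_{\mathbb P^n, \mathcal T}(a) \smile (-D_0)$ equals the rank of the cupping map $\smile (-D_0):\HH^\bullet(\widetilde{X^\vee_{\mathcal T}},\mathbb Q)\to\HH^\bullet(\widetilde{X^\vee_{\mathcal T}},\mathbb Q)$, while the rank of the period sheaf equals the dimension of the vanishing cohomology of a smooth Calabi-Yau hypersurface $Y\subset\mathbb P^n$, a classical quantity. Since $[\widetilde{Y^\vee_{\mathcal T}}] = (\pi^\vee)^*[Y^\vee]$, a formal projection-formula argument recasts the rank as that of $\smile [Y^\vee]$ acting on the hypercohomology $\mathbb H^\bullet(X^\vee, R\pi^\vee_*\underline{\mathbb Q}_{\widetilde{X^\vee_{\mathcal T}}})$. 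The toric decomposition theorem of [dCMM] then splits $R\pi^\vee_*\underline{\mathbb Q}$ into a direct sum of shifted intersection complexes $\IC_V$ of closed toric subvarieties $V\subset X^\vee$ with combinatorially computable multiplicities, and because $[Y^\vee]$ is ample, hard Lefschetz for intersection cohomology decomposes each summand into primitive pieces on which the cup product is injective in the appropriate range.

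The main obstacle will be the concrete combinatorial count that identifies this decomposition-theoretic rank with the dimension of the vanishing cohomology of a smooth hypersurface of degree $n+1$ in $\mathbb P^n$. This is the only step that truly exploits the special geometry of projective space: for a general Gorenstein Fano toric $X$ the decomposition-theoretic rank instead matches the vanishing \emph{intersection} cohomology of a $\triangle$-regular hypersurface, which need not coincide with ordinary vanishing cohomology, and this is precisely why the conjecture is open in general. For $X=\mathbb P^n$, however, the toric stratification of $X^\vee$ and the faces of $\triangle^\vee$ are transparent enough that the multiplicities in the decomposition and the primitive intersection cohomology dimensions can be pinned down explicitly and summed to the classical vanishing cohomology dimension; working out this combinatorics is what will occupy Section~4.
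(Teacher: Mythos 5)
Your proposal mirrors the paper's proof exactly: both reduce the theorem to showing (i) that the coefficient functions of $B_{\mathbb P^n,\mathcal T}(a)\smile(-D_0)$ solve the extended GKZ-system, carried out precisely by verifying that the root-vector operators $\pounds_v$ produce a remainder lying in the kernel of $\smile(-D_0)$, and (ii) that the rank of $\smile(-D_0)$ on $\HH^\bullet(\widetilde{X^\vee_{\mathcal T}},\mathbb Q)$ equals $\nu_n$, computed by pushing down to $X^\vee$ via the adjunction/projection-formula lemma, invoking the toric decomposition theorem of [dCMM], and applying hard Lefschetz for intersection cohomology to the ample class $[Y^\vee]$. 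One small calibration on your closing paragraph: the rank computation in (ii) is \emph{not} what is special to $\mathbb P^n$ — Section 6 shows it generalizes, always yielding $\dim\IHH^{n-1}_{\text{van}}(Y,\mathbb Q)$ — rather, what is special to $\mathbb P^n$ is the [BHLSY] completeness theorem identifying the period sheaf with the solution sheaf of the extended GKZ-system, without which the inclusion step of the argument has no analogue.
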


\begin{proof}
It was recently proved in [BHLSY, Corollary 1.5] that 
the extended GKZ-system is complete when $X$ is $\mathbb P^n$, 
that is, the period sheaf of Calabi-Yau hypersurfaces $Y_f$ in $\mathbb P^n$ coincides with the solution sheaf of the extended GKZ-system.
When $X$ is $\mathbb P^n$, the rank of the period sheaf 
is equal to 
\begin{eqnarray}
\nu_n & = & \frac{n}{n+1}(n^n - (-1)^n), 
\nonumber
\end{eqnarray}
which is the dimension of the middle vanishing cohomology
\begin{eqnarray}
H^{n-1}_{\text{van}}(Y_f, \mathbb C) & = &
\text{ker} (i_!: H^{n-1}(Y_f, \mathbb C) \to H^{n+1}(\mathbb P^n, \mathbb C) )
\nonumber
\end{eqnarray}
(see [BHLSY, Proposition 6.3, Corollary 6.4]).
Using these results,
it suffices to show that 
\begin{enumerate}[label=(\roman*)] 
\item
$B_{\mathbb P^n, \mathcal T} (a) \cdot (-D_0)$ satisfies the extended GKZ-system, 

\item
The rank of the map (\ref{cupping}) is equal to $\nu_n$.
\end{enumerate}
The proofs of (i) and (ii) are the content of the next two sections.
\end{proof}

\section{$\mathfrak g$-invariance of $B_{X,\mathcal T}(a) \cdot [\widetilde{Y^\vee_{\mathcal T}}]$}

Let $\mathbb P_\Sigma$ be a complete smooth toric variety defined by a fan $\Sigma$ in the scalar extension $N_{\mathbb R}$ of a lattice $N \cong \mathbb Z^n$.
We fix an identification $N = \mathbb Z^n$,
and let $t_1, \ldots, t_n$ be the standard coordinates of the torus
$T = N \otimes_{\mathbb Z} \mathbb C^* = (\mathbb C^*)^n$.
The Lie algebra of the full automorphism group of $\mathbb P_\Sigma$
is determined by the Lie algebra of $T$ and the root system 
$R(\Sigma)$, 
defined as
\begin{eqnarray}
R(\Sigma) & = &
\{ v \in \mathbb Z^n: \exists u \in \Sigma(1) \text{  with }  \langle v, u \rangle =-1 
\text{ and } \langle v, u' \rangle \geq 0 \text{  for all } u' \in \Sigma(1), u' \neq u \}
\nonumber
\end{eqnarray}
([O, Proposition 3.13], [HLY1, (2.10)]).
Here, 
$\Sigma(1)$ is the set of one-dimensional cones in the fan $\Sigma$; 
abusing notation, we identify the one-dimensional cones 
with their unique primitive generators. 
Fix $v \in R(\Sigma)$, and 
let $u$ be the corresponding element in $\Sigma(1)$.
Write $v = (v^1, \ldots, v^n)$, 
and $u = (u^1, \ldots, u^n)$.
Define
$t^v = t_1^{v^1} \cdots t_n^{v^n}$ and 
$\delta_u = \sum_{i=1}^n u^i t_i \frac{\partial}{\partial t_i}$.
Then the Lie algebra of the automorphism group of $\mathbb P_\Sigma$
can be described in terms of vector fields 
([O, Proposition 3.13], [HLY1, (2.11)])
\begin{eqnarray}
\text{Lie}(\text{Aut} \mathbb P_\Sigma) & = &
\left( \bigoplus_{i=1}^n \mathbb C t_i \frac{\partial}{\partial t_i} \right)  \oplus
\left(\bigoplus_{v \in R(\Sigma)} \mathbb C t^v \delta_u \right).
\nonumber
\end{eqnarray}
Recall that $\mathcal T^\vee$ is a regular projective triangulation of $\triangle^\vee$, 
and let $\Sigma_{\triangle^\vee, \mathcal T^\vee}$ be the refinement of the fan
$\Sigma_{\triangle^\vee}$ induced by $\mathcal T^\vee$.
Applying the above discussion 
to the complete regular fan $\Sigma_{\triangle^\vee, \mathcal T^\vee}$
and the toric variety $\widetilde X = \mathbb P_{\Sigma_{\triangle^\vee, \mathcal T^\vee}}$, 
we see that the one-dimensional cones in 
$\Sigma_{\triangle^\vee, \mathcal T^\vee}(1)$ 
which could appear in the definition of the root system 
$R(\Sigma_{\triangle^\vee, \mathcal T^\vee})$ 
can only be one of those in $\Sigma_{\triangle^\vee}(1)$.
By the duality of $\triangle$ and $\triangle^\vee$, 
we have the following description of $R(\Sigma_{\triangle^\vee, \mathcal T^\vee})$.

\begin{lemma}
Let $u_1, \ldots, u_r$ be the vertices of $\triangle^\vee$, 
$F_1, \ldots, F_r$ be the corresponding facets of $\triangle$.
Then $R(\Sigma_{\triangle^\vee, \mathcal T^\vee}) = \cup_{i=1}^r (\mathring{F_i} \cap \mathbb Z^n)$
where $\mathring{F_i}$ is the relative interior of $F_i$.
For $v \in \mathring{F_i} \cap \mathbb Z^n$, 
the corresponding vector field is then $t^v \delta_{u_i}$.
\end{lemma}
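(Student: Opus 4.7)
The plan is to unpack the definition of $R(\Sigma_{\triangle^\vee, \mathcal T^\vee})$ and match it against the face-duality between $\triangle$ and $\triangle^\vee$. First I would identify $\Sigma_{\triangle^\vee, \mathcal T^\vee}(1)$ with $(\triangle^\vee \cap \mathbb Z^n) \setminus \{0\}$: unimodularity of $\mathcal T^\vee$ forces every lattice point of $\triangle^\vee$ to appear as a vertex of the triangulation, and every nonzero boundary lattice point of $\triangle^\vee$ is automatically primitive, since $u = ku'$ with $k \geq 2$ would make $u' = u/k$ a nonzero interior lattice point of the reflexive polytope $\triangle^\vee$.

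For the inclusion $\bigcup_i (\mathring{F_i} \cap \mathbb Z^n) \subseteq R(\Sigma_{\triangle^\vee, \mathcal T^\vee})$, I would take $v \in \mathring{F_i} \cap \mathbb Z^n$ and verify the root conditions with distinguished ray $u_i$. The equation $\langle v, u_i\rangle = -1$ is the defining equation of the facet $F_i$. For any other $u' \in \Sigma_{\triangle^\vee, \mathcal T^\vee}(1)$, reflexivity of $\triangle$ gives $\langle v, u'\rangle \geq -1$; equality is excluded because $v \in \mathring{F_i}$ forces $\{u \in \triangle^\vee : \langle v, u\rangle = -1\} = F_i^\ast = \{u_i\}$. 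Since $\langle v, u'\rangle$ is an integer, it must be $\geq 0$.

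Conversely, suppose $v \in R(\Sigma_{\triangle^\vee, \mathcal T^\vee})$ with distinguished ray $u$. The key step is to show that $u$ is a vertex of $\triangle^\vee$. If $u$ were in the relative interior of a face $G$ of $\triangle^\vee$ of positive dimension, I would write $u = \sum_j t_j u_{i_j}$ as a strict convex combination of the vertices of $G$; each $u_{i_j}$ is a nonzero lattice point of $\triangle^\vee$ different from $u$, so the root condition yields $\langle v, u_{i_j}\rangle \geq 0$ and hence $\langle v, u\rangle \geq 0$, contradicting $\langle v, u\rangle = -1$. With $u = u_i$, the equation $\langle v, u_i\rangle = -1$ places $v$ on the facet $F_i$, while $\langle v, u_j\rangle \geq 0 > -1$ for $j \neq i$ prevents $v$ from lying on any other facet; thus $v \in \mathring{F_i}$.

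Finally, the vector-field formula $t^v \delta_{u_i}$ is a direct reading of the Oda formula displayed just above the lemma, applied to the root $v$ with distinguished generator $u_i$. The only subtle step is the convex-combination argument forcing the distinguished ray to be a vertex of $\triangle^\vee$; everything else is a routine translation through the reflexive face correspondence.
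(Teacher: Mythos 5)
Your proof is correct and supplies the details that the paper leaves implicit. The paper's stated justification for this lemma is just ``This is clear,'' with the substantive claim — that the distinguished ray in any root of $R(\Sigma_{\triangle^\vee, \mathcal T^\vee})$ must lie in the coarser fan $\Sigma_{\triangle^\vee}(1)$, i.e.\ be a vertex of $\triangle^\vee$ — asserted without argument in the sentence preceding the lemma. You prove exactly that claim via the convex-combination argument (if $u$ is a non-vertex lattice point in the relative interior of a positive-dimensional face, it is a convex combination of vertices distinct from $u$, each of which must pair nonnegatively with $v$, contradicting $\langle v, u\rangle = -1$), then complete the identification of $R(\Sigma_{\triangle^\vee, \mathcal T^\vee})$ with the interior facet lattice points using the reflexive face correspondence. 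Your preliminary identification of $\Sigma_{\triangle^\vee, \mathcal T^\vee}(1)$ with $(\triangle^\vee \cap \mathbb Z^n)\setminus\{0\}$ (via unimodularity and the reflexivity-based primitivity argument) is also correct and worth spelling out, since the definition of the root set is taken over the refined fan rather than over $\Sigma_{\triangle^\vee}$. This is the same route the paper intends, just written out.
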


\begin{proof}
This is clear.
\end{proof}

Note that 
$\triangle = \{ v \in \mathbb R^n: \langle v, u_i \rangle \geq-1 \,\, \forall 1 \leq i \leq r \}$.
If $v \in \mathring{F_i} \cap \mathbb Z^n$, 
then $\langle v, u_i \rangle =-1$,
$\langle v, u_j \rangle \geq 0$ for all $j \neq i$, $1\leq j \leq r$.
Let $v' \in \triangle \cap \mathbb Z^n$.
Then 
\begin{itemize}
\item
$v' + v \in \triangle$ $\Leftrightarrow$ 
$\langle v', u_i \rangle \geq 0$ $\Leftrightarrow$  $v' \notin F_i$, 

\item
$v' + v \notin \triangle$ $\Leftrightarrow$  $\langle v', u_i \rangle =-1$
$\Leftrightarrow$  $v' \in F_i$.
\end{itemize}
Based on this observation, 
the following differential operator is well-defined
\begin{eqnarray}
\pounds_v & = & \sum_{v' \in \triangle \cap \mathbb Z^n, v' \notin F_i} 
(\langle v', u_i \rangle + 1) a_{v'} \frac{\partial}{\partial a_{v' + v}}, 
\qquad v \in \mathring{F_i} \cap \mathbb Z^n.
\label{D_v}
\end{eqnarray}

\begin{lemma}
The period integrals $\Pi_\cdot(a)$ of the Calabi-Yau hypersurfaces $\widetilde Y_f$ in $\widetilde X$ 
satisfy $\pounds_v \Pi_\cdot(a) = 0$ for all $v \in R(\Sigma_{\triangle^\vee, \mathcal T^\vee})$.
\end{lemma}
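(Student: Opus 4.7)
The plan is to realize $\pounds_v$ as the induced vector field on the parameter space $\mathbb{C}[\triangle\cap\mathbb{Z}^n]$ of the one-parameter subgroup $\phi_\epsilon = \exp(\epsilon\, t^v\delta_{u_i}) \subset \mathrm{Aut}(\widetilde{X})$, and then to invoke the invariance of period integrals along its orbits. Since $Y_{\phi_\epsilon^*f} = \phi_\epsilon^{-1}(Y_f)$, the cycle $\gamma$ on $Y_f$ corresponds to $\phi_\epsilon^{-1}(\gamma)$ on the translated fibre via parallel transport in the Gauss--Manin local system, and pulling back the Poincar\'e residue of $\omega/f$ by $\phi_\epsilon$ will yield
\[
\Pi_\gamma(\phi_\epsilon^*f) = \Pi_\gamma(f).
\]
Differentiating at $\epsilon = 0$ and applying the chain rule reduces the claim to identifying the first-order variation $\dot{a}(0)$ of the coefficient vector with the coefficients appearing in $\pounds_v$.

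To compute $\dot{a}(0)$, I work in the torus trivialization, identifying sections $s \in H^0(\widetilde{X}, -\omega_{\widetilde{X}})$ with their Laurent polynomial representatives via $s \mapsto s\cdot\omega$. Under pullback, the Laurent representative of $\phi_\epsilon^*s$ becomes $(\tilde{s}\circ\phi_\epsilon)\cdot(\omega/\phi_\epsilon^*\omega)$. A direct application of Cartan's formula gives the key Lie derivative
\[
\mathcal{L}_{t^v\delta_{u_i}}\,\omega = \langle v, u_i\rangle\, t^v\,\omega = -t^v\,\omega,
\]
exploiting the defining property $\langle v, u_i\rangle = -1$ of $v \in \mathring{F}_i$. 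Differentiating the trivialization formula at $\epsilon = 0$ then yields
\[
\dot{f}(0) = (t^v\delta_{u_i})(f) + t^v f = \sum_{v' \in \triangle\cap\mathbb{Z}^n} a_{v'}\,\bigl(\langle v', u_i\rangle + 1\bigr)\, t^{v+v'}.
\]

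Two observations complete the identification. First, terms with $v' \in F_i$ drop out automatically since $\langle v', u_i\rangle + 1 = 0$; for the remaining $v' \notin F_i$, the observation preceding \eqref{D_v} guarantees $v + v' \in \triangle\cap\mathbb{Z}^n$. We thus read off $\dot{a}_{v+v'}(0) = (\langle v', u_i\rangle + 1)\, a_{v'}$, and substituting into the differentiated invariance produces exactly $\pounds_v\, \Pi_\cdot(a) = 0$ after re-indexing the sum by $v'' = v + v'$.

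The main subtlety is that the crucial $+1$ shift is invisible to a naive treatment of $\phi_\epsilon^*f$ as the mere composition $f\circ\phi_\epsilon$, which would yield only $\sum a_{v'}\langle v', u_i\rangle\, t^{v+v'}$ and fail both to annihilate the boundary terms with $v'\in F_i$ (which would otherwise push the polynomial off the support $\triangle$) and to match \eqref{D_v}. The shift emerges only after one pulls back $f$ carefully as a section of the line bundle $-\omega_{\widetilde{X}}$, at which point the nontrivial Lie derivative $\mathcal{L}_X\omega = -t^v\omega$ enters as the correction forced by the trivialization; once this bookkeeping is in place, the fact that $\phi_\epsilon^*f$ remains supported on $\triangle$ is a built-in consistency check rather than a separate claim.
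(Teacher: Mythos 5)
Your proposal is correct and takes essentially the same approach as the paper: both hinge on the computation $\mathcal{L}_{t^v\delta_{u_i}}\omega = \langle v,u_i\rangle t^v\omega = -t^v\omega$ producing the crucial $+1$ shift, and both interpret $\pounds_v$ as the vector field on $H^0(\widetilde X, -\omega_{\widetilde X})$ generated by the one-parameter subgroup of $\text{Aut}(\widetilde X)$. The paper packages this slightly more directly by observing $0 = \int_{\tau(\gamma)}\text{Lie}_{t^v\delta_{u_i}}(\omega/f)$ (Cartan's formula plus Stokes for the closed form $\omega/f$), which avoids the parallel-transport discussion $\gamma \mapsto \phi_\epsilon^{-1}(\gamma)$ you invoke before differentiating, but the underlying calculation is identical.
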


\begin{proof}
Recall $\omega = \frac{dt_1}{t_1} \wedge \cdots \wedge \frac{dt_n}{t_n}$, 
$f(t) = \sum_{v' \in \triangle \cap \mathbb Z^n} a_{v'} t^{v'}$, 
and $\omega_f$ is the Poincar\'{e} residue of $\frac{\omega}{f}$.
Fix $\gamma \in H_{n-1}(\widetilde Y_f, \mathbb C)$, 
and let $\tau(\gamma) \in H_n(\widetilde X \backslash \widetilde Y_f, \mathbb C)$
be the tube over $\gamma$.
Then $\Pi_\gamma(a) = \int_\gamma \omega_f = \int_{\tau(\gamma)} \frac{\omega}{f}$.
For any $v \in \mathring{F_i} \cap \mathbb Z^n$
with corresponding vector field $t^v \delta_{u_i}$, 
we have 
\begin{eqnarray}
0 & = & \int_{\tau(\gamma)} \text{Lie}_{t^v \delta_{u_i}} \left(\frac{\omega}{f} \right)
\nonumber
\end{eqnarray}
where $\text{Lie}_{t^v \delta_{u_i}}$ means the Lie derivative.
Straightforward computation shows that
\begin{eqnarray}
\text{Lie}_{t^v \delta_{u_i}} \frac{d t_k}{t_k} 
\quad = \quad d( \langle e_k, u_i \rangle t^v)
\quad = \quad \langle e_k, u_i \rangle t^v \sum_{l=1}^n \langle v, e_l \rangle \frac{d t_l}{t_l}
\nonumber
\end{eqnarray}
where $e_k$ is the $k$-th standard basis vector of $\mathbb R^n$.
Hence,
\begin{eqnarray}
\text{Lie}_{t^v \delta_{u_i}} \omega 
\quad = \quad ( \sum_{k=1}^n \langle e_k, u_i \rangle t^v \langle v, e_k \rangle ) \omega
\quad = \quad  \langle v, u_i \rangle t^v \omega
\quad = \quad - t^v \omega.
\nonumber
\end{eqnarray}
Similarly, 
$\text{Lie}_{t^v \delta_{u_i}} f = 
\sum_{v' \in \triangle \cap \mathbb Z^n} a_{v'} \langle v', u_i \rangle t^{v+v'}$.
Hence,
\begin{eqnarray}
\text{Lie}_{t^v \delta_{u_i}} \frac{\omega}{f} & = & 
- \frac{ \sum_{v' \in \triangle \cap \mathbb Z^n} a_{v'} \langle v', u_i \rangle t^{v + v'} }
{f^2} \omega + \frac{- t^v}{f} \omega
\quad = \quad 
- \frac{\sum_{v' \in \triangle \cap \mathbb Z^n} a_{v'} ( \langle v', u_i \rangle +1) t^{v+v'} }{ f^2 } \omega 
\nonumber \\
& = & \left( \sum_{v' \in  \triangle \cap \mathbb Z^n} a_{v'}  ( \langle v', u_i \rangle +1) \frac{\partial}{\partial a_{v+v'}} \right) \frac{\omega}{f}.
\nonumber
\end{eqnarray}
The lemma follows.
\end{proof}

\begin{define}
The PDE system consisting of equations
(\ref{D_l})(\ref{t-invariance})(\ref{Euler})(\ref{D_v})
is called the \emph{extended GKZ-system}.
\end{define}

It turns out that $B_{X, \mathcal T} (a) \cdot (-D_0)$ 
is always annihilated by the differential operators in (\ref{D_v}).

\begin{theorem}
$\pounds_v ( B_{X, \mathcal T} (a) \cdot (-D_0) ) = 0$ for all $v \in R(\Sigma_{\triangle^\vee, \mathcal T^\vee})$.
\end{theorem}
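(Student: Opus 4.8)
Since $-D_0$ is a fixed class in $\HH^\bullet(\widetilde{X^\vee_{\mathcal T}},\mathbb C)$ and $\pounds_v$ only differentiates in the $a$-variables, $\pounds_v\big(B_{X,\mathcal T}(a)\cdot(-D_0)\big)=\big(\pounds_v B_{X,\mathcal T}(a)\big)\cdot(-D_0)$, so the plan is to show that this cohomology-valued function vanishes identically. I would start from two structural facts. First, since $u_i\in\mathbb Z^n$ is a vertex of $\triangle^\vee$, the linear equivalence relations give $\sum_{j=1}^p\langle v_j,u_i\rangle D_j=0$, and combined with $\sum_{j=1}^p D_j=-D_0$ this yields
\[
-D_0=\sum_{j=1}^p(\langle v_j,u_i\rangle+1)D_j=\sum_{v_j\notin F_i}(\langle v_j,u_i\rangle+1)D_j,
\]
the last step because $\langle v_j,u_i\rangle=-1$ exactly when $v_j\in F_i$. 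Second, if $\{v,v_j\}$ spans a cone of $\Sigma_{\triangle,\mathcal T}$ then $v_j\in F_i$ or $v_j=0$: every maximal simplex of $\mathcal T$ is the join of $\{0\}$ with a simplex of the induced triangulation of $\partial\triangle$, and that simplex lies inside the single facet $F_i$ as soon as it contains the relative interior point $v\in\mathring F_i$. Hence $D_vD_j=0$ in $\HH^\bullet(\widetilde{X^\vee_{\mathcal T}},\mathbb C)$ for every $j\ge1$ with $v_j\notin F_i$.

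Next I would compute $\pounds_v B_{X,\mathcal T}(a)$ explicitly. Writing $\pounds_v=\sum_{v'\in\triangle\cap\mathbb Z^n}(\langle v',u_i\rangle+1)a_{v'}\partial_{a_{v'+v}}$ (the terms with $v'\in F_i$ dropping out) and applying \eqref{derivative} — which holds for $\partial_{a_0}$ too, the extra contribution of the $1/a_0$-prefactor being absorbed by the recursion $O_\ell(D_0+\ell_0-1)=O_{\ell-e_0}$ at the index $0$, while $O_\ell(D_k+\ell_k)=O_{\ell-e_k}$ for $1\le k\le p$ — one collects, for each Laurent monomial $a^mE$ with $E=\exp(\sum_i(\log a_i)D_i)$, a coefficient class $C_m=\sum_{v'\notin F_i,\ m-e_{v'}+e_{v'+v}\in M}(\langle v',u_i\rangle+1)\,O_{m-e_{v'}}$, so that $\pounds_v B_{X,\mathcal T}(a)\cdot(-D_0)=\tfrac1{a_0}\big(\sum_m C_m\cdot(-D_0)\,a^m\big)E$. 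It therefore suffices to prove $C_m\cdot(-D_0)=0$ for every $m$, and I would do so by the two mechanisms already used in the proof of Theorem~\ref{solutions of GKZ}. First, when a shift $m-e_{v'}+e_{v'+v}$ lies in $L$ but outside the Mori cone $\mathcal M$, the corresponding term $O_{m-e_{v'}}$ vanishes, because a primitive collection divides the leading monomial of the associated binomial in $\mathcal J_{\mathcal A}$ and hence a product $D_{i_1}\cdots D_{i_k}$ occurs in the numerator of $O_{m-e_{v'}}$. Second, the surviving terms are reorganised using the recursions for $O$ together with the identity $\langle v'+v,u_i\rangle+1=\langle v',u_i\rangle$ (valid since $v\in\mathring F_i$, so $\langle v,u_i\rangle=-1$): they telescope, leaving a class of the form $D_v\cdot\eta$. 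For such a class the first paragraph finishes the job, since $D_v\cdot\eta\cdot(-D_0)=\sum_{v_j\notin F_i}(\langle v_j,u_i\rangle+1)\,D_vD_j\,\eta=0$ because $D_vD_j=0$ for $v_j\notin F_i$. (In the smallest case $X=\mathbb P^1$ the cancellations are total: $C_m=0$ for all $m$ except $m=e_0-e_{v}$, where $C_{e_0-e_v}=D_v$, so $\pounds_v B_{\mathbb P^1,\mathcal T}(a)=\tfrac{D_v}{a_v}E$ and $\pounds_v B_{\mathbb P^1,\mathcal T}(a)\cdot(-D_0)=0$ outright.)

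I expect the reorganisation in the second mechanism — showing that the part of $C_m$ not already annihilated by the first mechanism is divisible by $D_v$ — to be the main obstacle. It is precisely the step where one must control at the same time which shifts of $m$ stay in $\mathcal M$, the weights $\langle v',u_i\rangle+1$, and the boundary monomials on which an index vector leaves the region $d_0\le0$ where $O$ is given by the stated formula (e.g.\ the monomials coming from $\ell=0$, for which $C_m$ must be read off directly as a multiple of $D_v$ rather than through the recursion). Note that nothing in this argument uses $X=\mathbb P^n$; it proves $\pounds_v\big(B_{X,\mathcal T}(a)\cdot(-D_0)\big)=0$ for any Gorenstein Fano $X$ admitting a regular projective triangulation.
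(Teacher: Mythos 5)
You have the right skeleton --- expand $\pounds_v B_{X,\mathcal T}(a)$ via (\ref{derivative}) and the recursions for $O_d$, discard shifts that leave the Mori cone by the Stanley--Reisner mechanism from the proof of Theorem \ref{solutions of GKZ}, and finish with the linear-equivalence identity $-D_0=\sum_{v'\notin F_i}(\langle v',u_i\rangle+1)D_{v'}$ --- but the heart of the argument is missing, at exactly the point you flag as ``the main obstacle.'' The claim that, after removing non-Mori terms, the coefficient of each monomial telescopes to a class of the form $D_v\cdot\eta$ is precisely what would have to be proved, and it is not the statement the argument actually runs on. The paper proceeds differently: (i) for $v'\neq 0$ the summation range $M-e_{v'+v}+e_{v'}=(M+\ell_{v'})-e_v+e_0$ (with $\ell_{v'}$ the primitive relation of the primitive collection $\{v,v'\}$) is enlarged to the uniform range $M_- -e_v+e_0$ using the vanishing assertion (\ref{vanishing}), $(D_{v'}+d_{v'})O_d=0$ on the difference set; once the range is uniform, the total coefficient $III=\sum_{v'\neq 0}(\langle v',u_i\rangle+1)(D_{v'}+d_{v'})+(D_0+d_0-1)$ vanishes identically from the linear relations together with $d+e_v-e_0\in L$, so these terms die outright, with no $D_v$ extracted and no cupping with $-D_0$ needed; (ii) the genuine leftover is $II=\bigl(\sum_{d\in M_0-e_v}O_d\,a^d\bigr)\exp\bigl(\sum_{v'}(\log a_{v'})D_{v'}\bigr)$, coming from the $v'=0$ term where $O_{d+e_0}$ is undefined, and its coefficients $O_d$ need not contain a $D_v$ factor (if some $\ell\in M_0$ has $\ell_v\ge 1$, then $d=\ell-e_v$ has $d_v\ge 0$); it is killed only after cupping with $-D_0$, by rewriting $-D_0=\sum_{v'\neq 0}(\langle v',u_i\rangle+1)(D_{v'}+d_{v'})$ against each $O_d$ (the scalar parts cancel because $d+e_v\in L$ and $\langle v,u_i\rangle=-1$) and invoking a second vanishing assertion (\ref{vanishing2}), $(D_{v'}+d_{v'})O_d=0$ for all $v'\notin F_i$, $v'\neq 0$, proved by the same primitive-collection analysis. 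Note that the scalar shifts $d_{v'}$ matter here, so even your correct geometric observation $D_vD_j=0$ for $v_j\notin F_i$ is not the identity that ultimately does the work.

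Your $\mathbb P^1$ computation is correct, and your closing remark that nothing uses $X=\mathbb P^n$ agrees with the paper, which states and proves the theorem for general $X$. But as written the proposal asserts, rather than proves, the one cancellation that carries the theorem, and the asserted form of that cancellation (divisibility of the surviving coefficients by $D_v$) is not how those terms are controlled in general; so the proof is incomplete.
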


\begin{proof}
Suppose $v \in \mathring{F_i} \cap \mathbb Z^n$. 
For any $v' \in \triangle \cap \mathbb Z^n$, $v' \notin F_i$, from (\ref{derivative}) we have
\begin{eqnarray}
a_{v'} \frac{\partial}{\partial a_{v' + v}} B_{X, \mathcal T} (a)  & = &
\frac{1}{a_0} \left( \sum_{d \in M-e_{v'+v}} a_{v'} \, O_d \, a^d \right) 
\text{exp} \left( \sum_{v' \in \triangle \cap \mathbb Z^n} (\log a_{v'}) D_{v'} \right)
\nonumber
\end{eqnarray}
Clearly $a_{v'} a^d = a^{d + e_{v'}}$.
\footnote{In Section 2, 
we used $v_0, v_1, \ldots, v_p$ to denote the lattice points in $\triangle$
which yields a one-to-one correspondence between 
$\{0, 1, \ldots, p\}$ and  $\triangle \cap \mathbb Z^n$.
In the proof of this theorem, 
we use the letters $v', v''$ to enumerate the points in $\triangle \cap \mathbb Z^n$, 
so $e_{v'}$ denotes the standard basis vector in $\mathbb Z^{p+1}$
with $1$ in the $v'$-coordinate and $0$ everywhere else.
Similarly, $D_{v'}$ and $a_{v'}$ denote the divisor and the $a$-variable corresponding to $v'$.
For any $\gamma \in \mathbb Z^{p+1}$, 
we use $\gamma_{v'}$ to denote the $v'$-coordinate of $\gamma$.}

{\bf (a)}
If $v' \neq 0$, 
a case-by-case analysis, 
depending on the value of $d_{v'}$, 
shows  that
$$O_d = (D_{v'} + d_{v'} +1) O_{d+e_{v'}}.
$$
Hence
\begin{eqnarray}
a_{v'} \frac{\partial}{\partial a_{v' + v}} B_{X, \mathcal T}(a)  & = &
\frac{1}{a_0} \left( \sum_{d \in M-e_{v'+v} +e_{v'}} (D_{v'} + d_{v'} ) O_d a^d \right)
\text{exp} \left( \sum_{v' \in \triangle \cap \mathbb Z^n} (\log a_{v'}) D_{v'} \right).
\label{sum}
\end{eqnarray}
From $(v'+v)-v'=v-0$, 
we have $\ell_{v'}:=e_v+e_{v'}-e_{v'+v}-e_0 \in L$.
Then $$M - e_{v'+v} + e_{v'} = (M + \ell_{v'}) -e_v + e_0.$$
Since $v \in \mathring{F_i}$ and $v' \notin F_i$, 
$v$ and $v'$ do not span a cone in $\Sigma_{\triangle, \mathcal T}$.
In fact, 
$\{v, v' \}$ is a primitive collection with primitive relation $\ell_{v'}$.
Let $M_- = \{ \ell \in M: \ell_0 \leq-1 \}$, 
$M_0 = M \backslash M_- = \{ \ell \in M: \ell_0 = 0 \}$.
Clearly $\ell_{v'} \in M_-$, $M + \ell_{v'} \subset M_-$.
For reasons that will become clear later, 
we want to replace the sum 
$\sum_{d \in (M+\ell_{v'})-e_v + e_0}$ in (\ref{sum}) by 
a seemingly larger sum 
$\sum_{d \in M_- -e_v + e_0}$.
Note that for $d \in M_- -e_v + e_0$, 
one has $d_0 \leq 0$, 
hence $O_d$ is well-defined
for the additional values of $d$ we allow in the larger sum.
The following assertion
\begin{eqnarray}
(D_{v'} + d_{v'}) O_d = 0 & & \text{ for all } d \in (M_- \backslash M+ \ell_{v'})-e_v+e_0
\label{vanishing}
\end{eqnarray}
immediately implies that the two sums are equal.

\emph{Proof of the assertion:}
let $\gamma = d+ e_v - e_0 - \ell_{v'} = d - e_{v'} + e_{v' + v}$, 
then $\gamma \in L \backslash M$ and
$d= \gamma + e_{v'} - e_{v'+v}$.
As argued in the proof of Theorem \ref{solutions of GKZ}, 
there exists a primitive collection $P$ such that the set 
$\{v'': \gamma_{v''} <0 \}$ contains $P$.
As $d_{v'} = \gamma_{v'} +1$, we have two cases
\begin{itemize}
\item
$\gamma_{v'} \neq-1$, in which case 
$$P \subset \{v'': \gamma_{v''} <0 \} = \{v'': (\gamma+e_{v'})_{v''} <0 \}
\subset \{v'': d_{v''} <0 \}, 
$$
and hence $O_d = 0$,

\item
$\gamma_{v'} =-1$, in which case 
$$P \subset \{v'': \gamma_{v''} <0 \} = \{v'': (\gamma+e_{v'})_{v''} <0 \} \cup \{ v' \}
\subset  \{v'': d_{v''} <0 \} \cup \{ v' \}, 
$$
and hence $D_{v'} O_d =0$
(note that $d_{v'}=0$).
\end{itemize}
This concludes the proof of the assertion.

It follows from (\ref{vanishing}) that 
\begin{eqnarray}
a_{v'} \frac{\partial}{\partial a_{v' + v}} B_{X, \mathcal T}(a)  & = &
\frac{1}{a_0} \left( \sum_{d \in M_- -e_v +e_0} (D_{v'} + d_{v'} ) O_d a^d \right)
\text{exp} \left( \sum_{v' \in \triangle \cap \mathbb Z^n} (\log a_{v'}) D_{v'} \right).
\label{nonzero v'}
\end{eqnarray}

{\bf (b)} 
If $v' = 0$, 
then 
\begin{eqnarray}
a_0 \frac{\partial}{\partial a_v} B_{X, \mathcal T}(a) & = &
\frac{1}{a_0} \left( \sum_{d \in M - e_v} O_d a^d a_0 \right) 
\text{exp} \left( \sum_{v' \in \triangle \cap \mathbb Z^n} (\log a_{v'}) D_{v'} \right).
\nonumber
\end{eqnarray}
With $a^d a_0 = a^{d+e_0}$, 
we also want to write $O_d$ in terms of $O_{d+e_0}$.
However, 
for $d \in M-e_v$,
$O_{d+e_0}$ is only well-defined when $d_0 \leq-1$, 
or equivalently when $d \in M_- - e_v$.
Hence, 
we split the sum $\sum_{d \in M-e_v}$ into 
$\sum_{d \in M_- -e_v}$ and  $\sum_{d \in M_0 - e_v}$.
For $d \in M_- - e_v$, 
one has 
$O_d = (D_0 + d_0) O_{d+e_0}$ and therefore the first sum is  
\begin{eqnarray}
\sum_{d \in M_- -e_v} O_d a^d a_0 & = & 
\sum_{d \in M_- - e_v} (D_0 + d_0) O_{d+e_0} a^{d+e_0} 
\quad = \quad
\sum_{d \in M_- -e_v +e_0} (D_0 + d_0-1) O_d a^d.
\nonumber
\end{eqnarray}
Now, we have 
$a_0 \frac{\partial}{\partial a_v} B_{X, \mathcal T}(a) = I + II$, 
where 
\begin{eqnarray}
I & = & \frac{1}{a_0} 
\left( \sum_{d \in M_- -e_v +e_0} (D_0 + d_0-1) O_d a^d \right)
\text{exp} \left( \sum_{v' \in \triangle \cap \mathbb Z^n} (\log a_{v'}) D_{v'} \right), 
\nonumber \\
II & = & 
\left( \sum_{d \in M_0 - e_v} O_d a^d \right) 
\text{exp} \left( \sum_{v' \in \triangle \cap \mathbb Z^n} (\log a_{v'}) D_{v'} \right).
\nonumber
\end{eqnarray}

Combining {\bf (a)} and {\bf (b)}, we have 
\begin{eqnarray}
& &  \left( \sum_{v' \in \triangle \cap \mathbb Z^n, v' \notin F_i}
(\langle v', u_i \rangle +1) a_{v'} \frac{\partial}{\partial a_{v'+v}} \right)
B_{X, \mathcal T}(a) 
\nonumber \\
& = & 
\frac{1}{a_0} 
\left( \sum_{d\in M_- -e_v + e_0} 
\left( \underbrace{ \left(
\sum_{v' \in \triangle \cap \mathbb Z^n, v' \neq 0}
( \langle v', u_i \rangle + 1) (D_{v'} + d_{v'}) \right)
+ (D_0 + d_0-1) 
}_{III} \right)
O_d a^d \right)
\nonumber \\
& & 
\cdot \text{exp} \left( \sum_{v' \in \triangle \cap \mathbb Z^n} (\log a_{v'}) D_{v'} \right)
+ II.
\nonumber
\end{eqnarray}
We claim $III = 0$.
Indeed, we have the following linear equivalence relations
\begin{eqnarray}
\sum_{v' \in \triangle \cap \mathbb Z^n, v'  \neq 0} \langle v', u_i \rangle D_{v'} 
& = & 
\left( \sum_{v' \in \triangle \cap \mathbb Z^n, v' \neq 0} D_{v'} \right) + D_0 
\quad = \quad 0, 
\nonumber
\end{eqnarray}
hence
\begin{eqnarray}
III & = & \left( \sum_{v' \in \triangle \cap \mathbb Z^n, v' \neq 0} \langle v', u_i \rangle d_{v'} \right) + 
\left( \sum_{v' \in \triangle \cap \mathbb Z^n} d_{v'} \right)-1.
\nonumber
\end{eqnarray}
From $d \in M_- -e_v+e_0$, $M_- \subset L$, 
we deduce $\gamma:= d+e_v -e_0 \in L$, 
which implies 
$$
\sum_{v' \in \triangle \cap \mathbb Z^n} \gamma_{v'}  =0 
\quad \quad \quad \quad \text{and} \quad \quad
\sum_{v' \in \triangle \cap \mathbb Z^n, v' \neq 0} \gamma_{v'} v' =0.
$$
It follows that 
\begin{eqnarray}
\sum_{v' \in \triangle \cap \mathbb Z^n, v' \neq 0} \gamma_{v'} \langle v', u_i \rangle & = & 0
\nonumber
\end{eqnarray}
and 
\begin{eqnarray}
\sum_{v' \in \triangle \cap \mathbb Z^n} d_{v'} & = & 
\left(\sum_{v' \in \triangle \cap \mathbb Z^n} \gamma_{v'} \right)-1+1 = 0.
\nonumber
\end{eqnarray}
Finally, we have
$III = - \langle v, u_i \rangle - 1 = -(-1)-1=0$.

To summarize, 
we now have
$\pounds_v B_{X, \mathcal T}(a) = II$.
It remains to show $II \cdot (-D_0) =0$.
We make the following assertion:
\begin{eqnarray}
\forall v' \in \triangle \cap \mathbb Z^n, \, v' \notin F_i, \, v' \neq 0, \, \text{and } d \in M_0-e_v, 
\text{ one has   }
(D_{v'}+d_{v'}) O_d = 0.
\label{vanishing2}
\end{eqnarray}
The proof of (\ref{vanishing2}) is analogous to the proof of (\ref{vanishing}).
Let $\gamma= d-e_{v'}+e_{v'+v}+e_0 = d+e_v-\ell_{v'}$; 
then $\gamma \in M_0-\ell_{v'}$.
By definition, the $0$-th coordinate of $\ell_{v'}$ is either $-1$ or $-2$
depending on whether $v'+v \neq 0$ or $v'+v=0$,
whence $\gamma_0 \geq 1$.
Since $\ell_0 \leq 0$ for all $\ell \in M$, 
we deduce $\gamma \in L \backslash M$.
It implies that there exists a primitive collection $P$ such that 
$P \subset \{v'' \in \triangle \cap \mathbb Z^n: \gamma_{v''} <0 \}$.
If $\gamma_{v'} \neq-1$, 
then $P \subset \{v'': \gamma_{v''} <0\} = \{v'': (\gamma+e_{v'})_{v''} <0 \} \subset 
\{v'': d_{v''} <0 \}$, 
whence $O_d =0$.
If $\gamma_{v'} =-1$, 
then $d_{v'}=0$ and
$P \subset \{v'': \gamma_{v''} <0 \} = \{v'': (\gamma+e_{v'})_{v''}<0\} \cup \{v' \}
\subset \{v'': d_{v''} <0 \} \cup \{ v' \}$, 
whence $D_{v'} O_d =0$.
From (\ref{vanishing2}), we have
\begin{eqnarray}
0 & = & \sum_{v' \in \triangle \cap \mathbb Z^n, v' \notin F_i, v' \neq 0} (\langle v', u_i \rangle +1)
\left(\sum_{d \in M_0 - e_v} (D_{v'} + d_{v'}) O_d a^d \right)
\text{exp} \left( \sum_{v' \in \triangle \cap \mathbb Z^n} (\log a_{v'}) D_{v'} \right)
\nonumber \\
& = & 
\left( \sum_{d \in M_0 -e_v} \left( \underbrace{\sum_{v' \in \triangle \cap \mathbb Z^n, v' \neq 0}
(\langle v', u_i \rangle +1) (D_{v'}+d_{v'})}_{IV} \right)
O_d a^d \right)
\text{exp} \left( \sum_{v' \in \triangle \cap \mathbb Z^n} (\log a_{v'}) D_{v'} \right).
\nonumber
\end{eqnarray}
As before, one can easily show
\begin{eqnarray}
IV & = & 
\sum_{v' \in \triangle \cap \mathbb Z^n, v' \neq 0} \langle v', u_i \rangle D_{v'} +
\sum_{v' \in \triangle \cap \mathbb Z^n, v' \neq 0} D_{v'} +
\sum_{v' \in \triangle \cap \mathbb Z^n, v' \neq 0} \langle v', u_i \rangle d_{v'} +
\sum_{v' \in \triangle \cap \mathbb Z^n, v' \neq 0}  d_{v'} 
\nonumber \\
& = & 0 + (-D_0) - \langle v, u_i \rangle +(-1) = -D_0.
\nonumber
\end{eqnarray}
Hence,
\begin{eqnarray}
0 & = & 
\left( \sum_{d \in M_0 -e_v} (-D_0)
O_d a^d \right)
\text{exp} \left( \sum_{v' \in \triangle \cap \mathbb Z^n} (\log a_{v'}) D_{v'} \right) 
\nonumber \\
& = & (-D_0) \cdot II
\nonumber
\end{eqnarray}
\end{proof}

The full automorphism group of $\mathbb P^n$ is $PGL(n+1, \mathbb C)$
with Lie algebra $sl(n+1, \mathbb C)$.
In this case, 
the differential operators $\pounds_v$ in (\ref{D_v}) 
correspond to the infinitesimal actions of the classical root vectors in 
$sl(n+1, \mathbb C)$.

\begin{corollary}
When $X = \mathbb P^n$, the coefficient functions of 
$B_{\mathbb P^n, \mathcal T} (a) \cdot (-D_0)$
are solutions of the extended GKZ-system.
\end{corollary}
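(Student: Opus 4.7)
The plan is to deduce the corollary directly from the two results already in hand, combined with a brief verification that for $X = \mathbb P^n$ the combinatorial root system $R(\Sigma_{\triangle^\vee, \mathcal T^\vee})$ reproduces the classical $A_n$ root system of $\mathrm{Aut}(\mathbb P^n) = PGL(n+1, \mathbb C)$.

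First I would observe that the coefficient functions of $B_{\mathbb P^n, \mathcal T}(a) \cdot (-D_0)$, taken with respect to any fixed $\mathbb C$-basis of $\HH^\bullet(\widetilde{X^\vee_{\mathcal T}}, \mathbb C)$, are $\mathbb C$-linear combinations of the coefficient functions of $B_{\mathbb P^n, \mathcal T}(a)$: this is because $-D_0$ is a fixed cohomology class independent of $a$, and cup product by it recombines the components linearly over $\mathbb C$. Theorem~\ref{solutions of GKZ} then forces these new coefficient functions to satisfy the GKZ equations (\ref{D_l}), (\ref{t-invariance}), (\ref{Euler}). In parallel, the theorem immediately preceding the corollary asserts $\pounds_v(B_{X, \mathcal T}(a) \cdot (-D_0)) = 0$ for every $v \in R(\Sigma_{\triangle^\vee, \mathcal T^\vee})$ and every Gorenstein Fano toric $X$ under our standing hypotheses. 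Since each $\pounds_v$ in (\ref{D_v}) is a purely $a$-differential operator with scalar coefficients, reading this identity componentwise against any basis of $\HH^\bullet(\widetilde{X^\vee_{\mathcal T}}, \mathbb C)$ yields $\pounds_v \phi = 0$ for every coefficient function $\phi$ of $B_{\mathbb P^n, \mathcal T}(a) \cdot (-D_0)$.

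The remaining point is to confirm that, when $X = \mathbb P^n$, the family of operators $\pounds_v$ so obtained exhausts the ``extra'' operators of the extended GKZ system. For this I would unpack the standard reflexive data: $\triangle^\vee$ is the simplex $\mathrm{conv}(e_1, \ldots, e_n, -(e_1 + \cdots + e_n))$, so its vertices $u_1 = e_1, \ldots, u_n = e_n, u_{n+1} = -(e_1 + \cdots + e_n)$ are precisely the primitive generators of the rays of $\Sigma_{\triangle^\vee}$, and a direct lattice-point count in the relative interior of each facet $F_i$ of the dual simplex $\triangle$ produces exactly the lattice points $v$ for which $t^v \delta_{u_i}$ ranges over a root space of $sl(n+1, \mathbb C) = \mathrm{Lie}(\mathrm{Aut}(\mathbb P^n))$. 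This gives $n$ roots per facet for a total of $n(n+1)$ classical root vectors, so the $\pounds_v$ cover all non-toric infinitesimal automorphisms. Combining the three steps completes the proof.

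There is no substantive obstacle here: the essential content has already been absorbed into the preceding theorem, and what remains is essentially bookkeeping together with the identification of the combinatorial root data $R(\Sigma_{\triangle^\vee, \mathcal T^\vee})$ with the classical $A_n$ root system of $\mathrm{Aut}(\mathbb P^n)$.
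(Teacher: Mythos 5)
Your proposal is correct, and it reconstructs exactly the argument the paper leaves implicit: by the definition of the extended GKZ-system as equations (\ref{D_l})–(\ref{Euler}) together with (\ref{D_v}), the corollary is an immediate consequence of Theorem~\ref{solutions of GKZ} (which gives (\ref{D_l})–(\ref{Euler}) for the coefficients of $B_{X,\mathcal T}(a)$, hence for any $\mathbb C$-linear combination of them) and the preceding theorem $\pounds_v(B_{X,\mathcal T}(a)\cdot(-D_0))=0$ (which gives (\ref{D_v})), neither of which requires the $\mathbb P^n$ hypothesis. Your additional check that for $X=\mathbb P^n$ the combinatorial roots $R(\Sigma_{\triangle^\vee,\mathcal T^\vee})$ — here $\mathcal T^\vee$ is trivial since $\triangle^\vee$ is already a unimodular simplex — count to $n(n+1)$ and match the $A_n$ root system is a sensible elaboration of the remark the paper makes just before the corollary; it is what ties the paper's notion of ``extended GKZ-system'' to the $sl(n+1,\mathbb C)$-formulation used in [BHLSY], and so is the right thing to record even though the corollary as literally stated does not need it.
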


This completes the first half of the proof of Theorem \ref{main}.

\section{The number of solutions in $B_{\mathbb P^n, \mathcal T}(a) \cdot (-D_0)$}

In this section, we describe a method to efficiently 
compute the rank of the cupping map
$$ (-D_0) \cdot: \HH^\bullet(\widetilde{X^\vee_{\mathcal T}}, \mathbb C) \to 
\HH^\bullet(\widetilde{X^\vee_{\mathcal T}}, \mathbb C)
$$
and apply it in the case $X = \mathbb P^n$.
If $\widetilde{Y^\vee_{\mathcal T}}$ is ample in $\widetilde{X^\vee_{\mathcal T}}$, 
then by the Lefschetz decomposition, 
the rank of cupping with $[\widetilde{Y^\vee_{\mathcal T}}]= -D_0$ 
is equal to the codimension of the primitive cohomology in 
$\HH^\bullet(\widetilde{X^\vee_{\mathcal T}}, \mathbb C)$.
However, 
$[\widetilde{Y^\vee_{\mathcal T}}] = (\pi^\vee)^* [Y^\vee]$, 
being the pullback of the ample class 
$[Y^\vee]$ on $X^\vee$ 
along the crepant resolution 
$\pi^\vee: \widetilde{X^\vee_{\mathcal T}} \to X^\vee$, 
is in general only semi-ample, 
and so the hard Lefschetz theorem fails.
Nevertheless, 
if we can relate the cohomology of $\widetilde{X^\vee_{\mathcal T}}$ 
to the cohomology of $X^\vee$ and its subvarieties, 
then we can apply the ampleness of $[Y^\vee]$ and the hard Lefschetz theorem to compute this rank on $X^\vee$.

\begin{lemma}
Let $f: X \to Y$ be a continuous map of topological spaces, with 
$D^+(X), D^+(Y)$ the bounded-below derived categories of $\mathbb Q$-sheaves on $X$ and $Y$, 
$\omega \in \HH^2(Y, \mathbb Q)$, and 
$F^\cdot \in D^+(X)$.
Then under the canonical isomorphism
$\mathbb H^\cdot(X, F^\cdot) \cong \mathbb H^\cdot(Y, Rf_* F^\cdot)$, 
cupping with $f^* \omega \in \HH^2(X, \mathbb Q)$ on the left-hand side 
is the same as cupping with $\omega$ on the right-hand side.
\end{lemma}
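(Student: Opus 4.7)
The plan is to realize both cup-product operations as morphisms in the derived category, and then reduce the statement to the naturality of the projection formula (equivalently, to the fact that $Rf_*$ is compatible with tensoring by a constant sheaf).

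First, I would interpret $\omega \in H^2(Y,\mathbb{Q}) = \mathrm{Hom}_{D^+(Y)}(\underline{\mathbb{Q}}_Y, \underline{\mathbb{Q}}_Y[2])$ as a morphism in $D^+(Y)$, so that $f^*\omega \in H^2(X,\mathbb{Q})$ is the morphism $\underline{\mathbb{Q}}_X \to \underline{\mathbb{Q}}_X[2]$ in $D^+(X)$ obtained by applying $f^{-1}$ (using $f^{-1}\underline{\mathbb{Q}}_Y = \underline{\mathbb{Q}}_X$). By the general description of cup product on hypercohomology, cupping with $f^*\omega$ on $\mathbb{H}^\bullet(X, F^\bullet)$ is induced by the morphism
\[
F^\bullet \;\cong\; \underline{\mathbb{Q}}_X \otimes F^\bullet \xrightarrow{\;(f^*\omega) \,\otimes\, \mathrm{id}\;} \underline{\mathbb{Q}}_X[2] \otimes F^\bullet \;=\; F^\bullet[2],
\]
and cupping with $\omega$ on $\mathbb{H}^\bullet(Y, Rf_* F^\bullet)$ is induced analogously by $\omega \otimes \mathrm{id} : Rf_*F^\bullet \to Rf_*F^\bullet[2]$.

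Next, I would recall that the canonical isomorphism $\mathbb{H}^\bullet(X, F^\bullet) \cong \mathbb{H}^\bullet(Y, Rf_* F^\bullet)$ comes from applying $R\Gamma(Y, -)$ to the equality of functors $R\Gamma(X, -) = R\Gamma(Y, -) \circ Rf_*$. Consequently, the claim reduces to checking that, under the canonical identifications, $Rf_*$ sends the morphism $(f^*\omega) \otimes \mathrm{id}_{F^\bullet}: F^\bullet \to F^\bullet[2]$ to the morphism $\omega \otimes \mathrm{id}_{Rf_*F^\bullet}: Rf_*F^\bullet \to Rf_*F^\bullet[2]$.

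This is an instance of the projection formula $Rf_*(F^\bullet \otimes f^{-1}G) \cong Rf_* F^\bullet \otimes G$, applied naturally in the $G$-variable to the morphism $\omega: \underline{\mathbb{Q}}_Y \to \underline{\mathbb{Q}}_Y[2]$. The naturality square produced by this functoriality is exactly the equality we need. The main point of care — indeed essentially the only nontrivial step — is to pin down that the projection-formula isomorphism is natural in the constant-sheaf variable; one may either invoke the standard projection-formula statement for $Rf_*$ in the derived category of sheaves on topological spaces, or verify it directly by taking a Godement (or any flasque) resolution of $F^\bullet$ and using that $f_*$ commutes with tensoring by constant sheaves on the nose. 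Once this naturality is in hand, the lemma is formal.
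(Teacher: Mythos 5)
Your proof is correct, but it takes a genuinely different route from the paper's. Both begin the same way: interpret $\omega$ as a morphism $\underline{\mathbb Q}_Y \to \underline{\mathbb Q}_Y[2]$ in $D^+(Y)$ and $f^*\omega$ as its pullback. From there the paper realizes cupping with $f^*\omega$ as \emph{pre}-composition in $\mathrm{Hom}_{D^+(X)}(\underline{\mathbb Q}_X, F^\cdot[\bullet])$ and appeals only to the $(f^*, Rf_*)$-adjunction, observing that the canonical isomorphism $\mathbb H^\bullet(X,F^\cdot) \cong \mathbb H^\bullet(Y,Rf_*F^\cdot)$ is precisely the adjunction isomorphism with $G^\cdot = \underline{\mathbb Q}_Y$, and then invoking functoriality in the $G^\cdot$-variable. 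You instead realize cupping as \emph{post}-composition by the endomorphism $(f^*\omega)\otimes\mathrm{id}_{F^\cdot}$ of $F^\cdot$ (these agree up to the sign in graded-commutativity, which is irrelevant here) and then reduce to naturality of the projection formula $Rf_*(F^\cdot \otimes f^{-1}G) \cong Rf_*F^\cdot \otimes G$ in $G$. This is correct — you only need the projection formula for a flat (indeed constant, finite-rank) $G$, and you rightly flag that this is the one point demanding care, sketching a Godement-resolution check. The trade-off: the paper's route is slightly more economical, needing nothing beyond the bare adjunction and its bifunctoriality, whereas your route imports the projection formula (which, while standard, requires its own naturality verification in this generality of continuous maps). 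Conversely, your description via $(f^*\omega)\otimes\mathrm{id}$ makes it immediately visible that the cup-product action is a module-structure morphism on $F^\cdot$, which is a useful perspective the adjunction argument keeps implicit.
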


\begin{proof}
As $\HH^2(Y, \mathbb Q) =\Ext^2_{\mathbb Q}( \underline{\mathbb Q}_Y, \underline{\mathbb Q}_Y) = \HHom_{D^+(Y)} (\underline{\mathbb Q}_Y, \underline{\mathbb Q}_Y[2])$, 
we may interpret $\omega$ as an arrow in $D^+(Y)$, 
i.e. $\omega: \underline{\mathbb Q}_Y \to \underline{\mathbb Q}_Y[2]$.
Similarly, 
$f^* \omega$ is regarded as an arrow in $D^+(X)$. 
Note that $f^* \omega: \underline{\mathbb Q}_X \to \underline{\mathbb Q}_X[2]$
is precisely the image of $\omega:  \underline{\mathbb Q}_Y \to \underline{\mathbb Q}_Y[2]$
under the functor $f^*: D^+(Y) \to D^+(X)$.
Under the isomorphisms
$\mathbb H^i(X, F^\cdot) = \HHom_{D^+(X)} (\underline{\mathbb Q}_X, F^\cdot[i])$, 
the cupping map 
\begin{eqnarray}
\smile\!\! f^* \omega & : & \mathbb H^i(X, F^\cdot) \to \mathbb H^{i+2}(X, F^\cdot)
\nonumber
\end{eqnarray}
is the composition product
\begin{eqnarray}
\HHom_{D^+(X)} (\underline{\mathbb Q}_X, F^\cdot[i]) & \to &
\HHom_{D^+(X)} (\underline{\mathbb Q}_X, F^\cdot[i+2]), 
\nonumber \\
\alpha & \mapsto & \alpha[2] \circ f^* \omega.
\nonumber
\end{eqnarray}
The same is true for cupping with $\omega$ on $Y$.
The functor $f^*$ is left adjoint to $Rf_*$, 
i.e. one has a family of isomorphisms
\begin{eqnarray}
\HHom_{D^+(X)}( f^* G^\cdot, F^\cdot) & \cong &  \HHom_{D^+(Y)} (G^\cdot, Rf_* F^\cdot)
\nonumber
\end{eqnarray}
for all $F^\cdot \in D^+(X)$, $G^\cdot \in D^+(Y)$, 
and it is functorial in both $F^\cdot$ and $G^\cdot$.
The canonical isomorphism
$\mathbb H^i(X, F^\cdot) \cong \mathbb H^i(Y, Rf_* F^\cdot)$
can be recast as a special case of the above isomorphism:
\begin{eqnarray}
\HHom_{D^+(X)}(f^* \underline{\mathbb Q}_Y, F^\cdot[i]) & \cong &
\HHom_{D^+(Y)}( \underline{\mathbb Q}_Y, Rf_* F^\cdot[i]).
\nonumber
\end{eqnarray}
The lemma is now a consequence of the functoriality in the factor $\underline{\mathbb Q}_Y$.
\end{proof}

\begin{corollary} \label{cupping ds}
The rank of (\ref{cupping}) is equal to the rank of 
\begin{eqnarray}
\smile\!\! [Y^\vee] & : & \mathbb H^\cdot(X^\vee, R\pi^\vee_* \underline{\mathbb Q}_{\widetilde{X^\vee_{\mathcal T}}}) \to 
\mathbb H^\cdot(X^\vee, R\pi^\vee_* \underline{\mathbb Q}_{\widetilde{X^\vee_{\mathcal T}}}).
\label{cupping downstairs}
\end{eqnarray}
\end{corollary}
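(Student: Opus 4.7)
The plan is to deduce this corollary as an essentially immediate specialization of the preceding lemma. First I would apply that lemma with the map $f = \pi^\vee : \widetilde{X^\vee_{\mathcal T}} \to X^\vee$, with sheaf $F^\cdot = \underline{\mathbb Q}_{\widetilde{X^\vee_{\mathcal T}}}$, and with cohomology class $\omega = [Y^\vee] \in \HH^2(X^\vee, \mathbb Q)$. The lemma then gives a commutative square in which the canonical isomorphism
$$\mathbb H^\cdot(\widetilde{X^\vee_{\mathcal T}}, \underline{\mathbb Q}_{\widetilde{X^\vee_{\mathcal T}}}) \;\cong\; \mathbb H^\cdot(X^\vee, R\pi^\vee_* \underline{\mathbb Q}_{\widetilde{X^\vee_{\mathcal T}}})$$
intertwines cupping with $(\pi^\vee)^*[Y^\vee]$ on the left with cupping with $[Y^\vee]$ on the right.

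Next I would identify $(\pi^\vee)^*[Y^\vee]$ with $[\widetilde{Y^\vee_{\mathcal T}}] = -D_0$. This is exactly the identification already invoked in the introduction: since $\pi^\vee$ is the crepant resolution induced by $\mathcal T$ and $\widetilde{Y^\vee_{\mathcal T}} = (\pi^\vee)^{-1}(Y^\vee)$ with $Y^\vee$ transverse to all toric orbits, pullback of the anticanonical class of $X^\vee$ yields the anticanonical class of $\widetilde{X^\vee_{\mathcal T}}$, namely $-D_0$. Hence the cupping map (\ref{cupping}), viewed on the left-hand cohomology, is identified via the canonical isomorphism with the map (\ref{cupping downstairs}). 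Since the two maps are conjugate under an isomorphism, they have the same rank. Finally, extending scalars from $\mathbb Q$ to $\mathbb C$ does not affect the rank, so the claim follows.

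In short, there is no new content beyond the preceding lemma; the only verification is the identification $(\pi^\vee)^*[Y^\vee] = -D_0$, which is built into the setup, and the bookkeeping that the map (\ref{cupping}) really is cupping with this class. The main conceptual point to highlight is that although cupping with $-D_0$ on $\HH^\bullet(\widetilde{X^\vee_{\mathcal T}}, \mathbb C)$ fails hard Lefschetz because $-D_0$ is only semi-ample upstairs, the identification with (\ref{cupping downstairs}) transports the computation to $X^\vee$, where $[Y^\vee]$ is ample, enabling the decomposition-theorem and hard-Lefschetz arguments indicated in the introduction to take over in subsequent sections.
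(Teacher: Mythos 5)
Your proof is correct and is exactly the intended specialization of the preceding lemma (the paper itself states the corollary without a separate proof). You correctly apply the lemma with $f=\pi^\vee$, $F^\cdot = \underline{\mathbb Q}_{\widetilde{X^\vee_{\mathcal T}}}$, and $\omega=[Y^\vee]$, identify $(\pi^\vee)^*[Y^\vee]$ with $-D_0$, and note that extending scalars to $\mathbb C$ does not change rank.
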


To understand $R \pi^\vee_* \underline{\mathbb Q}_{\widetilde{X^\vee_{\mathcal T}}}$, we use 
Beilinson-Berstein-Deligne-Gabber's decomposition theorem ([BBD]).
Let $Z$ be an irreducible complex algebraic variety, 
and $D^b_c(Z)$ be the \emph{constructible bounded derived category} of $Z$.
An object in $D^b_c(Z)$ is a complex of sheaves of $\mathbb Q$-vector spaces on $Z$ 
such that its cohomology sheaves are constructible and have finite-dimensional stalks.
Let $U \subset Z$ be a nonsingular Zariski open subset and 
let $L$ be a local system of finite rank on $U$.
The \emph{intersection complex} $\IC_Z(L)$ is a complex of sheaves on $Z$, 
which extends the complex $L[\dim Z]$ on $U$ and is determined, 
up to unique isomorphism in $D^b_c(Z)$, 
by some support and co-support conditions.
Up to a dimensional shift, 
the hypercohomology of $\IC_Z(L)$ computes the intersection cohomology of $Z$ 
with coefficients in the local system $L$: 
$$\mathbb H^i(Z, \IC_Z(L)) = \IHH^{\dim Z + i} (Z, L).
$$
When $L = \underline{\mathbb Q}_U$, 
one obtains the intersection complex $\IC_Z$,
which computes the intersection cohomology groups of $Z$ up to a shift:
$$\mathbb H^i(Z, \IC_Z) = \IHH^{\dim Z +i} (Z, \mathbb Q).
$$
If $Z$ is smooth or rationally smooth, 
then there is a simple description of the intersection complex
$\IC_Z = \underline{\mathbb Q}_Z[\dim Z]$.
In this case, 
the rational intersection cohomology coincides with the rational singular cohomology:
$\IHH^\bullet(Z, \mathbb Q) = \HH^\bullet(Z, \mathbb Q)$.
The intersection cohomology groups of $Z$ is not a ring, 
however cup product with an ordinary cohomology class is well-defined
$$\smile: \HH^\bullet(Z, \mathbb Q) \times \IHH^\bullet (Z, \mathbb Q) \to \IHH^\bullet(Z, \mathbb Q),
$$
and this makes the intersection cohomology a module over ordinary cohomology.
If $\omega \in \HH^2(Z, \mathbb Q)$ is the class of an ample divisor on a projective variety $Z$, 
then for all $i, 0\leq i \leq \dim Z$, 
the $i$-th iterated cup product
\begin{eqnarray}
\omega^i: \IHH^{\dim Z-i} (Z, \mathbb Q) \to \IHH^{\dim Z+ i} (Z, \mathbb Q)
\nonumber
\end{eqnarray}
is an isomorphism. 
This is known as the hard Lefschetz theorem for intersection cohomology
([BBD]).

\begin{theorem}[Decomposition Theorem]
\label{BBD}
Let $f: X \to Y$ be a proper map of complex algebraic varieties.
Then there exists a finite collection of triples $(Y_a, L_a, d_a)$ made of 
locally closed smooth irreducible algebraic subvarieties $Y_a \subset Y$, 
semisimple local systems $L_a$ on $Y_a$, 
and integers $d_a$, 
such that one has a direct sum decomposition in the constructible bounded derived category $D^b_c(Y)$
\begin{eqnarray}
Rf_* \IC_X & \cong & \bigoplus_{(Y_a, L_a, d_a)} \IC_{\bar Y_a}(L_a) [-d_a].
\label{sheaf version}
\end{eqnarray}
Taking cohomology on both sides of (\ref{sheaf version}) yields the cohomological decomposition theorem
\begin{eqnarray}
\IHH^i(X, \mathbb Q) & \cong & \bigoplus_{(Y_a, L_a, d_a)} \IHH^{i-\dim X +\dim Y_a - d_a} (Y_a, L_a).
\nonumber
\end{eqnarray}
\end{theorem}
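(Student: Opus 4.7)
The plan is to follow the strategy of Beilinson-Bernstein-Deligne-Gabber in [BBD]. The key background fact is that the category of perverse sheaves on $Y$ (for the middle perversity) is an abelian, artinian, noetherian full subcategory of $D^b_c(Y)$ whose simple objects are exactly the complexes of the form $\IC_{\bar Z}(L)$ for a locally closed smooth irreducible subvariety $Z \subset Y$ and an irreducible local system $L$ on $Z$. Thus it suffices to establish two things: (i) that $Rf_* \IC_X$ is \emph{formal}, i.e. isomorphic in $D^b_c(Y)$ to the direct sum $\bigoplus_i {}^pH^i(Rf_* \IC_X)[-i]$ of its shifted perverse cohomology sheaves; and (ii) that each perverse cohomology sheaf ${}^pH^i(Rf_* \IC_X)$ is semisimple as a perverse sheaf.

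First, I would exploit the properness of $f$, which ensures $Rf_* = Rf_!$ and that the perverse cohomology sheaves ${}^pH^i(Rf_* \IC_X)$ are concentrated in a finite range of degrees. The formality statement (i) would then be extracted from the \emph{relative hard Lefschetz theorem}: choosing an $f$-ample class $\eta \in \HH^2(X, \mathbb{Q})$, one shows that the iterated cup product $\eta^i: {}^pH^{-i}(Rf_* \IC_X) \to {}^pH^{i}(Rf_* \IC_X)$ is an isomorphism of perverse sheaves. A Deligne-style argument, applying the Jacobson-Morozov lemma to the $\mathfrak{sl}_2$-action generated by $\eta$, then produces a canonical splitting of the perverse truncation filtration on $Rf_* \IC_X$, yielding (i).

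The semisimplicity (ii) is the main obstacle and the deepest input. The BBD strategy spreads $f$ to a morphism of schemes of finite type over a subring of $\mathbb{C}$, reduces modulo a prime, and invokes Deligne's Weil II: after this reduction, $Rf_* \IC_X$ is pure of weight zero for the Frobenius action on $\ell$-adic cohomology, and a theorem of Gabber shows that pure perverse sheaves are geometrically semisimple; an Artin-type comparison transports the decomposition back to the analytic category over $\mathbb{C}$. Alternative routes are M. Saito's theory of mixed Hodge modules, where semisimplicity is built into the formalism, or the purely Hodge-theoretic argument of de Cataldo-Migliorini, which runs a double induction on $\dim X$ and on the defect of semismallness of $f$. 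Once (\ref{sheaf version}) is established, the cohomological form follows at once by taking hypercohomology and applying the shift conventions $\IHH^i(X, \mathbb{Q}) = \mathbb{H}^{i - \dim X}(X, \IC_X)$ together with $\mathbb{H}^j(Y, \IC_{\bar Y_a}(L_a)) = \IHH^{j + \dim \bar Y_a}(\bar Y_a, L_a)$.
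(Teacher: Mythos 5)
The paper does not actually prove this theorem: it is Theorem~\ref{BBD} of Beilinson--Bernstein--Deligne--Gabber, cited directly from [BBD], with [dCM] offered as a survey reference. So there is no ``paper's own proof'' to compare against; the statement is invoked as a black box, exactly as one would normally do in an application of the decomposition theorem.

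That said, your sketch is a competent and essentially accurate summary of the known routes to the theorem. The reduction to two claims --- formality of $Rf_*\IC_X$ (obtained from relative hard Lefschetz via Deligne's $\mathfrak{sl}_2$/Jacobson--Morozov splitting argument) and semisimplicity of its perverse cohomology sheaves (obtained either by spreading out, reduction mod $p$, Weil II purity and Gabber's theorem that pure perverse sheaves are geometrically semisimple; or by Saito's mixed Hodge module formalism; or by the de Cataldo--Migliorini Hodge-theoretic induction on $\dim X$ and the defect of semismallness) --- is the standard architecture, and the derivation of the cohomological form by taking hypercohomology and tracking the $[\dim]$-shifts is correct. A couple of small caveats worth flagging: with the normalization $\IC_X$ extending $\underline{\mathbb{Q}}_U[\dim X]$, the relevant $\ell$-adic object after reduction mod $p$ is pure of weight $\dim X$, not of weight zero; and the relative hard Lefschetz isomorphism and the semisimplicity of pure perverse sheaves are themselves deep theorems, so what you have written is an outline of the logical structure of [BBD] rather than a proof. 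None of this detracts from the correctness of the outline --- it is just worth being clear that you are summarizing [BBD] (and its descendants) rather than supplying an independent argument, which is also precisely what the paper does by citing the result.
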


The readers can refer to [dCM] for an excellent survey on the decomposition theorem and perverse sheaves.
For our purposes, we need a version of the decomposition theorem for a toric fibration.
We recall some basic facts in toric geometry.
Let $X$ and $Y$ be complex toric varieties corresponding to the lattices $N_X$ and $N_Y$ and to the fans $\Sigma_X$ and $\Sigma_Y$.
Let $T_X$ and $T_Y$ be the maximal dimensional torus in $X$ and $Y$ respectively.
A \emph{toric map} is a morphism $f: X \to Y$
that induces a morphism of algebraic groups $g: T_X \to T_Y$ such that 
$f$ is $T_X$-equivariant with respect to the $T_X$-action on $Y$ via $g$.
Such an $f$ is determined by a unique linear map
$f_{N_{\mathbb R}}: (N_X)_{\mathbb R} \to (N_Y)_{\mathbb R}$ 
inducing $f_N: N_X \to N_Y$ such that for every cone $\sigma$ in $\Sigma_X$, 
there is a cone $\tau$ in $\Sigma_Y$ with $f_{N_{\mathbb R}} (\sigma) \subset \tau$.
Let $f_*: \Sigma_X \to \Sigma_Y$ be the map such that
$f_*(\sigma)$ is the smallest cone in $\Sigma_Y$ that contains $f_{N_{\mathbb R}}(\sigma)$.
Let $|\Sigma_X| = \cup_{\sigma \in \Sigma_X} \sigma$ 
be the support of the fan $\Sigma_X$, 
and likewise $|\Sigma_Y| = \cup_{\tau \in \Sigma_Y} \tau$ be the support of $\Sigma_Y$.
A toric map $f$ is proper if and only if 
$f_{N_{\mathbb R}}^{-1} (|\Sigma_Y|) = | \Sigma_X|$.
A proper toric map $f: X \to Y$ is called a \emph{fibration} 
if $f$ is surjective and has connected fibers, 
or equivalently if $f_N$ is surjective.
See Section 2 of [dCMM] for more details.

\begin{theorem}[{[dCMM, Theorem 5.1]}]
\label{toric fibration}
If $X$ and $Y$ are complex toric varieties and $f: X \to Y$ is a toric fibration,
then we have a decomposition
\begin{eqnarray}
Rf_* \IC_X & \cong & \bigoplus_{\tau \in \Sigma_Y} \bigoplus_{b \in \mathbb Z}
\IC_{V(\tau)}^{\oplus s_{\tau, b}} [-b]
\label{BBD for toric fibration}
\end{eqnarray}
where 
$V(\tau)$ is the closure of the orbit $O(\tau)$ corresponding to the cone $\tau \in \Sigma_Y$.
\end{theorem}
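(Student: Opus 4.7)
The plan is to deduce the stated toric refinement of the decomposition theorem from the general statement (Theorem \ref{BBD}) by exploiting the torus-equivariance that the toric structure provides. First I would apply Theorem \ref{BBD} to the proper morphism $f: X \to Y$ to obtain a (non-canonical) decomposition
$$Rf_* \IC_X \cong \bigoplus_{a} \IC_{\bar Y_a}(L_a)[-d_a]$$
with smooth locally closed strata $Y_a \subset Y$, semisimple local systems $L_a$, and shifts $d_a \in \mathbb Z$. The isomorphism classes of the summands are encoded canonically in the perverse cohomology sheaves $^p\mathcal H^b(Rf_* \IC_X)$, so the task reduces to showing that each of these perverse cohomology sheaves is a direct sum of copies of $\IC_{V(\tau)}$ for $\tau \in \Sigma_Y$, with no nontrivial local system coefficients.

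The bridge to the toric setting is the following equivariance argument. Since $f$ is a toric map, it is $T_X$-equivariant with respect to the action of $T_X$ on $Y$ through the surjection $g: T_X \to T_Y$; consequently $Rf_* \IC_X$ carries a canonical $T_X$-equivariant, and hence $T_Y$-equivariant, structure. Each $^p\mathcal H^b(Rf_* \IC_X)$ therefore sits in the bounded equivariant derived category as a $T_Y$-equivariant semisimple perverse sheaf on $Y$. Next I would classify $T_Y$-equivariant simple perverse sheaves on $Y$: any such object has the form $\IC_{V(\tau)}(L)$ for some $\tau \in \Sigma_Y$ and some $T_Y$-equivariant irreducible local system $L$ on the orbit $O(\tau)$. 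Because $T_Y$ acts transitively on $O(\tau)$ with stabilizer a subtorus (hence connected), $T_Y$-equivariant local systems on $O(\tau)$ are classified by representations of the component group of the stabilizer, which is trivial, forcing $L \cong \underline{\mathbb Q}_{O(\tau)}$. Thus the only $T_Y$-equivariant simple perverse sheaves are the $\IC_{V(\tau)}$. Finally, since $T_Y$ is connected it fixes the isomorphism class of every simple constituent of $^p\mathcal H^b(Rf_* \IC_X)$, so an equivariant splitting into equivariant simple summands exists; setting $s_{\tau,b}$ to be the corresponding multiplicity and assembling the perverse cohomology sheaves with their shifts yields the desired decomposition.

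The main technical obstacle is the last step: promoting the BBD semisimplicity to an equivariant semisimplicity, which requires working rigorously in the $T_Y$-equivariant constructible derived category and checking that the perverse $t$-structure is compatible with the equivariant structure, so that the (abstract) decomposition of $^p\mathcal H^b(Rf_* \IC_X)$ as a sum of simple perverse sheaves can actually be realized by $T_Y$-equivariant subsheaves. The classification of equivariant local systems on a single orbit is clean, but weaving it together with the non-canonical nature of the BBD splitting is where one has to be most careful; once that formalism is in place, the toric combinatorics contributes only the identification of the strata with the orbits $O(\tau)$ and the labeling of the multiplicities $s_{\tau,b}$.
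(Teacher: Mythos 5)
The paper does not prove this statement; it is quoted directly from [dCMM, Theorem 5.1] and used as a black box throughout Sections 4--6, so there is no in-paper argument to compare your attempt against. Evaluated on its own terms, your equivariance-based reduction to the general decomposition theorem (Theorem \ref{BBD}) is sound and is the natural route to the toric refinement. The substantive verifications all hold: the support locus of $Rf_*\IC_X$ is canonically attached to the object and therefore $T_Y$-invariant, forcing each closed support to be an orbit closure $V(\tau)$; the stabilizer of a point of $O(\tau)$ under $T_Y$ is $\mathrm{Hom}\bigl(M_Y/(\tau^\perp\cap M_Y),\mathbb C^*\bigr)$, which is a connected torus because $\tau^\perp\cap M_Y$ is a saturated sublattice, so $T_Y$-equivariant local systems on $O(\tau)$ are constant; and the isotypic decomposition of a semisimple perverse sheaf is canonical and hence inherits the equivariant structure, which gives the equivariant splitting into copies of $\IC_{V(\tau)}$.

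The one thing you correctly flag as the remaining work -- setting up the $T_Y$-equivariant constructible derived category carefully enough that $Rf_*$ and the perverse $t$-structure interact with it as you need -- is genuine but standard (Bernstein--Lunts); it is not a gap, just formalism to cite rather than redo. A lighter-weight alternative, closer to how [dCMM] phrases things, avoids the equivariant category entirely: on the open stratum $O(\tau)$ of each support, the local system of a BBD summand is a subquotient of $R^kf_*\underline{\mathbb Q}_X|_{O(\tau)}$; since $T_Y$ acts transitively on $O(\tau)$ through $g:T_X\to T_Y$ and $f$ is equivariant, all fibers of $f$ over $O(\tau)$ are identified and $R^kf_*\underline{\mathbb Q}_X|_{O(\tau)}$ has trivial monodromy, hence is constant. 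This is the same connectivity-of-stabilizer fact expressed geometrically, and it lets you conclude the local systems are trivial directly from the non-equivariant decomposition theorem plus $T_Y$-invariance of supports.
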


The point is: (a) the subvarieties appearing in the decomposition 
(\ref{BBD for toric fibration}) are torus-invariant, and (b) the intersection complexes that appear have constant coefficients.
The subvarieties $V(\tau)$ which appear in the decomposition are called the \emph{supports} of the toric fibration. 
For every $\tau \in \Sigma_Y$, 
set $\delta_\tau: = \sum_{b \in \mathbb Z} s_{\tau, b}$.
Then $V(\tau)$ is a support if and only if $\delta_\tau >0$.
If both $X$ and $Y$ are simplicial, 
then there is an easy formula for $\delta_\tau$.

If $f: X \to Y$ is a toric fibration, 
then for every $\tau \in \Sigma_Y$, 
we put 
\begin{eqnarray}
d_{\ell}(X/\tau) & = & \# \{ \sigma \in \Sigma_X \,\, | \,\, f_*(\sigma) = \tau, \,\, \text{codim}(\sigma)-\text{codim}(\tau) = \ell \}.
\nonumber
\end{eqnarray}

\begin{theorem}[{[dCMM, Theorem 6.1 (i)]}]
\label{total multiplicity}
Let $f: X \to Y$ be a toric fibration, and both $X$ and $Y$ are simplicial toric varieties, 
then for every $\tau \in \Sigma_Y$, one has
\begin{eqnarray}
\delta_\tau & = & \sum_{\sigma \subseteq \tau} (-1)^{\dim(\tau) - \dim(\sigma)} d_0(X/ \sigma).
\label{delta_tau}
\end{eqnarray}
\end{theorem}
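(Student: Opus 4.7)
The plan is to take stalks of both sides of the decomposition (\ref{BBD for toric fibration}) at a point $y_\tau \in O(\tau)$, identify total stalk dimensions using the parity of toric cohomology, and then M\"obius-invert the resulting identity over the face poset of $\tau$.

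For the left-hand side, I would apply proper base change. Since $X$ is simplicial, $\IC_X \cong \underline{\mathbb Q}_X[n_X]$, and the stalk of $Rf_* \IC_X$ at $y_\tau$ is $\HH^\bullet(f^{-1}(y_\tau), \mathbb Q)$ up to a cohomological shift. The fiber is stratified by the intersections $O(\sigma) \cap f^{-1}(y_\tau)$ for $\sigma \in \Sigma_X$ with $f_*(\sigma) = \tau$; each such intersection is a torus whose dimension equals $\dim O(\sigma) - \dim O(\tau)$, and this vanishes precisely for the $\sigma$'s counted by $d_0(X/\tau)$. Since higher-dimensional tori have zero Euler characteristic, additivity gives $\chi(f^{-1}(y_\tau)) = d_0(X/\tau)$. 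Moreover, the fiber is itself a complete simplicial toric variety (its fan being built from those $\sigma \in \Sigma_X$ with $f_*(\sigma) = \tau$), hence its rational cohomology is concentrated in even degrees, so in fact $\dim \HH^\bullet(f^{-1}(y_\tau), \mathbb Q) = d_0(X/\tau)$.

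For the right-hand side, simpliciality of $Y$ gives $\IC_{V(\sigma)} \cong \underline{\mathbb Q}_{V(\sigma)}[\dim V(\sigma)]$ for every $\sigma \in \Sigma_Y$. The stalk of $\IC_{V(\sigma)}[-b]$ at $y_\tau$ is nonzero if and only if $y_\tau \in V(\sigma)$, i.e., iff $\sigma \subseteq \tau$, in which case it is one-dimensional, placed in a single degree determined by $\dim \sigma$ and $b$. Because the stalks on both sides concentrate in a single parity of degrees, the total stalk dimensions match the sum of multiplicities, yielding
$$d_0(X/\tau) = \sum_{\sigma \subseteq \tau} \sum_b s_{\sigma,b} = \sum_{\sigma \subseteq \tau} \delta_\sigma.$$

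Finally, since $\tau$ is a simplicial cone, its face poset is the Boolean lattice of rank $\dim \tau$, whose M\"obius function is $\mu(\sigma, \tau) = (-1)^{\dim \tau - \dim \sigma}$. Inverting the preceding identity produces the desired formula
$$\delta_\tau = \sum_{\sigma \subseteq \tau} (-1)^{\dim \tau - \dim \sigma} d_0(X/\sigma).$$
The main obstacle is the parity step: without verifying that the stalks on both sides sit in a single parity of degrees, the stalk comparison would only deliver a signed combination of the $s_{\sigma, b}$, insufficient to read off $\delta_\tau$. This rests on the fact that complete simplicial toric varieties---both the fibers of $f$ and the orbit closures $V(\sigma)$---have rational cohomology supported in even degrees.
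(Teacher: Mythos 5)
The paper itself gives no proof of this statement; it is quoted verbatim from [dCMM, Theorem 6.1(i)]. So there is no internal argument of the paper's to compare against, but the paper does carry out essentially the same stalk-and-inversion computation (in its graded form, tracking powers of $t$) in the proof of Theorem~\ref{5.2}, where equations (\ref{form2}) and the surrounding lines are precisely your identity $d_0(X/\tau) = \sum_{\sigma\subseteq\tau}\delta_\sigma$ run with Poincar\'e polynomials in place of total dimensions. Your proof is the specialization at $t=1$ of that argument, and it is correct. Taking stalks of (\ref{BBD for toric fibration}) at $y_\tau$, applying proper base change on the left (using $\IC_X = \underline{\mathbb Q}_X[\dim X]$ since $X$ is simplicial), and observing that simpliciality of $Y$ forces each $\IC_{V(\sigma)}[-b]$ to have one-dimensional stalk in a single degree when $\sigma\subseteq\tau$ and zero stalk otherwise, you get $\dim \HH^\bullet(f^{-1}(y_\tau),\mathbb Q) = \sum_{\sigma\subseteq\tau}\delta_\sigma$; combining the Euler characteristic count over torus strata with even-degree concentration identifies the left side with $d_0(X/\tau)$; and M\"obius inversion over the Boolean face poset of the simplicial cone $\tau$ finishes.

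Two small remarks on the exposition. First, your closing paragraph slightly misplaces where the parity argument is needed: the total stalk dimension of the right-hand side is $\sum_{\sigma\subseteq\tau}\delta_\sigma$ unconditionally (there are no signs in a sum of nonnegative dimensions), so parity plays no role there. Parity is only used on the left side, to promote the Euler characteristic identity $\chi(f^{-1}(y_\tau)) = d_0(X/\tau)$ to the total-dimension identity $\dim\HH^\bullet(f^{-1}(y_\tau),\mathbb Q) = d_0(X/\tau)$. Second, the assertion that the fiber of a toric fibration over a point in $O(\tau)$ is a complete \emph{simplicial} toric variety (when $X$ is simplicial) is the one nontrivial external input and deserves a citation; alternatively you can bypass it entirely by invoking [dCMM, Corollary~4.7], as the paper does, which gives $\sum_i \dim\HH^i(f^{-1}(y_\tau),\mathbb Q)\,t^i = \sum_\ell d_\ell(X/\tau)(t^2-1)^\ell$, a polynomial in $t^2$ whose value at $t=1$ is $d_0(X/\tau)$ — delivering both the even-degree concentration and the Euler characteristic count in one stroke.
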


For the rest of this section, 
$\triangle$ is the reflexive polytope in $\mathbb R^n$ with vertices
$$v_1 = (n,-1, \ldots,-1), \,\, v_2 = (-1, n, \ldots,-1), \,\, \ldots, \,\, v_n = (-1, \ldots,-1, n), \,\, v_{n+1} = (-1,-1, \ldots,-1).
$$
The dual polytope $\triangle^\vee$ has vertices 
$$(1, 0, \ldots, 0),\hspace{.1in} (0, 1, \ldots, 0), \hspace{.1in} \ldots \hspace{.1in} (0, 0, \ldots, 1), \hspace{.1in} (-1,-1,\cdots,-1).$$
The corresponding toric varieties are 
$X = \mathbb P_{\triangle} = \mathbb P^n$ and 
$X^\vee = \mathbb P_{\triangle^\vee} = \mathbb P^n/ (\mathbb Z_{n+1}^{n-1})$.
Let $\mathcal T$ be a regular projective triangulation of $\triangle$, 
\footnote{We will show in the appendix that such a triangulation exists.}
and let 
\begin{eqnarray}
\pi^\vee: \quad \widetilde{X^\vee} = \mathbb P_{\Sigma_{\triangle, \mathcal T}} 
& \to & X^\vee = \mathbb P_{\Sigma_\triangle}
\nonumber
\end{eqnarray}
be the resulting projective crepant resolution. 
We apply Theorem \ref{toric fibration} to $\pi^\vee$, 
which is clearly a fibration.
Since $\widetilde{X^\vee}$ is smooth and $X^\vee$ is simplicial, 
we have
$\IC_{\widetilde{X^\vee}} = \underline{\mathbb Q}_{\widetilde{X^\vee}}[n]$ and 
$\IC_{V(\tau)} = \underline{\mathbb Q}_{V(\tau)}[\text{codim}(\tau)]$ 
for all $\tau \in \Sigma_\triangle$.
After a shift, 
the decomposition (\ref{BBD for toric fibration}) in this case becomes
\begin{eqnarray}
R\pi^\vee_* \underline{\mathbb Q}_{\widetilde{X^\vee}} & \cong & 
\bigoplus_{\tau \in \Sigma_\triangle} \bigoplus_{b \in \mathbb Z}
\underline{\mathbb Q}_{V(\tau)}^{\oplus s_{\tau, b}} [\text{codim}(\tau)-n-b].
\label{sheaf}
\end{eqnarray}
It follows that
\begin{eqnarray}
\mathbb H^\bullet(X^\vee, R\pi^\vee_* \underline{\mathbb Q}_{\widetilde{X^\vee}})
& \cong & \bigoplus_{\tau \in \Sigma_\triangle} \bigoplus_{b \in \mathbb Z}
\HH^{\bullet - \dim(\tau) -b} (V(\tau), \mathbb Q)^{\oplus s_{\tau, b}}
\label{cohomology}
\end{eqnarray}
and this decomposition is compatible with the cup product $\smile\!\! [Y^\vee]$.
Therefore, the rank of (\ref{cupping downstairs}) is equal to 
\begin{eqnarray}
\sum_{\tau \in \Sigma_\triangle} \delta_\tau \,\, \text{rank} 
(\smile\!\! [Y^\vee]: \HH^\bullet(V(\tau), \mathbb Q) \to \HH^\bullet(V(\tau), \mathbb Q)).
\label{rank3}
\end{eqnarray}

The cones in $\Sigma_\triangle$ are in one-to-one correspondence with the proper subsets of 
$\{v_1, \ldots, v_n, v_{n+1} \}$.
For each $0 \leq i \leq n$, 
there are a total ${n+1 \choose i}$-number of $i$-dimensional cones. 
If $\tau_i$ is an $i$-dimensional cone in $\Sigma_\triangle$, 
it is easy to deduce from [CLS, Theorem 12.4.1] that
\begin{eqnarray}
(\II) \! \HH^\bullet(V(\tau_i), \mathbb Q) & = & \mathbb Q[t]/(t^{n-i+1})
\label{icoc}
\end{eqnarray}
where $t$ has degree 2.
Since $[Y^\vee]$ is ample, 
from the hard Lefschetz theorem for intersection cohomology, 
we know
\begin{eqnarray}
\text{rank} (\smile\!\! [Y^\vee] : \HH^\bullet(V(\tau_i), \mathbb Q) \to \HH^\bullet(V(\tau_i), \mathbb Q) )
& = & n-i.
\nonumber
\end{eqnarray}
On the other hand, 
the number $d_0(X^\vee/ \tau_i)$ is equal to the normalized volume of the face 
which supports $\tau_i$, 
hence 
\begin{eqnarray}
d_0(X^\vee/\tau_i) & = & \left \{ \begin{array}{ll}
(n+1)^{i-1} \quad & 1\leq i \leq n \\
1 \quad & i =0.
\end{array} \right.
\nonumber
\end{eqnarray}
Using formula (\ref{delta_tau}) as both $\widetilde{X^\vee}$ and $X^\vee$ are simplicial, 
we get
\begin{eqnarray}
\delta_{\tau_i} & = & 
\left( \sum_{k=1}^i {i \choose k} (-1)^{i-k} (n+1)^{k-1} \right)  + (-1)^i
\nonumber \\
& = & \frac{ (n+1-1)^i - (-1)^i }{n+1}  + (-1)^i 
\quad = \quad 
\frac{n^i + n (-1)^i}{n+1}.
\nonumber
\end{eqnarray}
Plugging the above formulas into (\ref{rank3}), 
we obtain
\begin{eqnarray}
& & \text{rank of (\ref{cupping})} \quad = \quad 
\text{rank of (\ref{cupping downstairs})} \quad = \quad 
\sum_{i=0}^n {n+1 \choose i} \frac{n^i + n (-1)^i}{n+1} (n-i) 
\nonumber \\
& = & 
\frac{1}{n+1} \left( 
n \sum_{i=0}^n {n+1 \choose i} n^i + n^2 \sum_{i=0}^n {n+1 \choose i} (-1)^i 
- (n+1) \sum_{i=1}^n {n \choose i-1} n^i - (n+1)n \sum_{i=1}^n {n \choose i-1} (-1)^i \right)
\nonumber  \\
& = & 
\frac{1}{n+1} \left(
n [ (n+1)^{n+1} - n^{n+1}] + n^2 [0-(-1)^{n+1}] -(n+1)n [ (n+1)^n -n^n] + (n+1)n[0-(-1)^n] \right)
\nonumber \\
& = & 
\frac{1}{n+1} \left(
-n^{n+2} + (-1)^n n^2 + (n+1) n^{n+1} - (-1)^n n(n+1) \right)
\nonumber \\
& = & \frac{1}{n+1} \left( n^{n+1} - (-1)^n n \right)
\quad = \quad \frac{n}{n+1} (n^n - (-1)^n) 
\quad = \quad \nu_n.
\nonumber
\end{eqnarray}
This finishes the proof of Theorem \ref{main}.

\section{The numbers $a(i(n+1))$}

In this section, we still focus on the case $X = \mathbb P^n$.
Let $\mathcal T$ be a regular projective triangulation of $\triangle$, 
and $\widetilde{X^\vee_{\mathcal T}} = \mathbb P_{\Sigma_{\triangle, \mathcal T}}$
be the corresponding projective crepant resolution of $X^\vee$.

\begin{define}
\label{a(s)}
For an integer $s$, $0 \leq s \leq (n-1)(n+1)$, 
we denote by $a(s)$ the number of integer solutions to the equation 
\begin{eqnarray}
k_0 + k_1 + \cdots + k_n & = & s
\nonumber
\end{eqnarray}
where each $k_i$ is between $0$ and $n-1$.
\end{define}

It was observed in [BHLSY, (2.4)] that 
\begin{eqnarray}
\nu_n & = & \sum_{i=0}^{n-1} a(i(n+1)).
\label{partition 1}
\end{eqnarray}
On the other hand, since
$\nu_n$ is equal to the rank of (\ref{cupping}), we also have
\begin{eqnarray}
\nu_n & = & 
\dim_{\mathbb Q} \HH^\bullet(\widetilde{X^\vee_{\mathcal T}}, \mathbb Q)/ \, \text{ker} \,  [\widetilde{Y^\vee_{\mathcal T}}].
\nonumber
\end{eqnarray}
Note that the cohomology ring
$\HH^\bullet(\widetilde{X^\vee_{\mathcal T}}, \mathbb Q) 
= \bigoplus_{i=0}^n \HH^{2i} (\widetilde{X^\vee_{\mathcal T}}, \mathbb Q)$ 
has only even-degree components
and cupping with $[\widetilde{Y^\vee_{\mathcal T}}]$ 
increases the degree by $2$.
This implies an obvious grading on 
$\HH^\bullet(\widetilde{X^\vee_{\mathcal T}}, \mathbb Q)/ \, \text{ker} \,  [\widetilde{Y^\vee_{\mathcal T}}]$
in degrees $0, 2, \ldots, 2n-2$, 
and, 
as a consequence, 
the following partition
\begin{eqnarray}
\nu_n & = & \sum_{i=0}^{n-1} \dim_{\mathbb Q} 
(\HH^\bullet(\widetilde{X^\vee_{\mathcal T}}, \mathbb Q)/ \, \text{ker} \,  [\widetilde{Y^\vee_{\mathcal T}}])_{2i}.
\label{partition 2}
\end{eqnarray}
Naturally one may ask if the partition in (\ref{partition 2}) is independent of $\mathcal T$, 
and if it is, whether the partitions in (\ref{partition 1}) and (\ref{partition 2}) coincide.
The next theorem answers both questions affirmatively. 

\begin{theorem}
\label{5.2}
For each $i$, $0 \leq i \leq n-1$, one has
\begin{eqnarray}
a(i(n+1)) & = & 
\dim_{\mathbb Q} (\HH^\bullet(\widetilde{X^\vee_{\mathcal T}}, \mathbb Q)/ \text{ker} \, [\widetilde{Y^\vee_{\mathcal T}}])_{2i}.
\nonumber
\end{eqnarray}
In particular, 
the right-hand side is independent of $\mathcal T$.
\end{theorem}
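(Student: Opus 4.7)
The plan is to refine the total-rank argument of Section 4 to each cohomological degree, then match the resulting combinatorial sum to $a(i(n+1))$ via the Hodge theory of $Y_f \subset \mathbb P^n$. By Corollary \ref{cupping ds} together with the decomposition \eqref{cohomology}, the map $[\widetilde{Y^\vee_{\mathcal T}}] \cdot$ on $\HH^\bullet(\widetilde{X^\vee_{\mathcal T}}, \mathbb Q)$ splits as a direct sum of maps $[Y^\vee|_{V(\tau)}]\cdot$ acting on the shifted summands $\HH^{\bullet - \dim\tau - b}(V(\tau), \mathbb Q)^{\oplus s_{\tau, b}}$. Since $Y^\vee$ is ample on $X^\vee$, its restriction to each torus-orbit closure $V(\tau_j)$ is again ample; combined with \eqref{icoc}, Hard Lefschetz yields that cupping with $[Y^\vee]$ is an isomorphism $\HH^{2k}(V(\tau_j)) \to \HH^{2k+2}(V(\tau_j))$ for $0 \leq k \leq n-j-1$ and zero for $k = n-j$. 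Writing $\sigma_{\tau,b} := \dim\tau + b$, one obtains
$$
\dim\bigl(\HH^\bullet(\widetilde{X^\vee_{\mathcal T}}, \mathbb Q)/\ker[\widetilde{Y^\vee_{\mathcal T}}]\bigr)_{2i}
\;=\; \sum_{\tau \in \Sigma_\triangle}\sum_{b \in \mathbb Z} s_{\tau,b}\cdot \mathbf{1}\!\left[\tfrac{\sigma_{\tau,b}}{2} \leq i \leq \tfrac{\sigma_{\tau,b}}{2} + n - \dim\tau - 1\right],
$$
and summing over $i$ recovers the total rank $\nu_n$ of Section 4.

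Next I would identify the right-hand side with $a(i(n+1))$ via the Griffiths-Dwork residue calculus. For a smooth degree-$(n+1)$ hypersurface $Y_f \subset \mathbb P^n$ it yields $h^{n-1-i, i}_{\mathrm{van}}(Y_f, \mathbb C) = \dim R(f)_{i(n+1)}$, where $R(f) = \mathbb C[x_0, \ldots, x_n]/(\partial_0 f, \ldots, \partial_n f)$ is the Jacobian ring of $f$. For the Fermat polynomial $f_0 = \sum_j x_j^{n+1}$ one has $R(f_0) = \mathbb C[x_0, \ldots, x_n]/(x_0^n, \ldots, x_n^n)$, whose Hilbert function in degree $s$ counts tuples $(k_0, \ldots, k_n)$ with $0 \leq k_j \leq n-1$ and $\sum_j k_j = s$, i.e.\ $a(s)$. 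Deformation-invariance of the Hodge numbers of $H^{n-1}_{\mathrm{van}}$ across the smooth $\triangle$-regular family then yields $h^{n-1-i,i}_{\mathrm{van}}(Y_f) = a(i(n+1))$ for every $\triangle$-regular $Y_f$, which in particular recovers \eqref{partition 1}.

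What remains is to match the cohomological-degree grading on $\HH^\bullet(\widetilde{X^\vee_{\mathcal T}})/\ker[\widetilde{Y^\vee_{\mathcal T}}]$ with the Hodge index on $H^{n-1}_{\mathrm{van}}(Y_f)$ under the period identification of Theorem \ref{main}. Using the explicit formula in Theorem \ref{solutions of GKZ}, pairing $B_{\mathbb P^n, \mathcal T}(a)\cdot(-D_0)$ against a class of cohomological degree $2i$ produces a period whose leading asymptotic at the LCSL is a degree-$i$ polynomial in the $\log a_j$, coming from the exponential factor $\exp(\sum_j (\log a_j) D_j)$. By Schmid's nilpotent orbit theorem at the MUM boundary, such ``degree-$i$ log'' periods span the $i$-th graded piece of the monodromy weight filtration on the limit mixed Hodge structure, which is of Hodge-Tate type with $\operatorname{gr}_{n-1+i}^W \cong H^{n-1-i, i}_{\mathrm{van}}(Y_f, \mathbb C)$. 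Combining the three steps yields the claimed equality, and independence of $\mathcal T$ is then automatic.

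The hard part is precisely this third step: rigorously aligning the $2i$-grading with the Hodge-theoretic index. An alternative, purely combinatorial route is to compute the graded multiplicities $s_{\tau,b}$ explicitly for $\mathbb P^n$ --- where $\triangle$ is a simplex, so its faces and the induced subdivisions are particularly tractable --- using a graded refinement of Theorem \ref{total multiplicity} along the lines of [dCMM, Theorem 6.1(ii)], and then verify the identity by a generating-function manipulation against $\sum_s a(s) q^s = (1 + q + \cdots + q^{n-1})^{n+1}$. Either approach should close the argument, though the combinatorial one is appealing in that it avoids invoking the full limit-MHS machinery.
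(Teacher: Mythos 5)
Your steps (1) and (2) are sound: (1) correctly refines the decomposition-theorem count to each cohomological degree, and (2) identifies $a(i(n+1))$ with $h^{n-1-i,i}$ of $\HH^{n-1}_{\text{van}}(Y_f)$ via the Jacobian ring, which is exactly the content of Remark \ref{6.27}. The genuine gap is step (3), which you rightly flag as the hard part. Aligning the $2i$-grading on $\HH^\bullet(\widetilde{X^\vee_{\mathcal T}},\mathbb Q)/\ker[\widetilde{Y^\vee_{\mathcal T}}]$ with the Hodge grading of the limit mixed Hodge structure at the LCSL would require showing that the leading $\log$-order of the coefficient functions in each graded piece reproduces the monodromy weight filtration on the limit of $\HH^{n-1}_{\text{van}}(Y_f)$. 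This ``cohomology degree $\leftrightarrow$ weight grading'' dictionary is itself a piece of mirror symmetry, not a free input; making it rigorous would require controlling the full monodromy of the period local system (not just the GKZ solution sheaf) around the boundary of the secondary toric variety and verifying maximal unipotence, none of which is established either in the paper or in your proposal, and it is arguably harder than the theorem being proved.

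The combinatorial alternative you mention at the end is the route the paper actually takes, and it is not a formality. The paper forms the generating series for the graded dimensions, computes $\sum_{b}s_{\tau,b}\,t^{\dim\tau+b}$ via M\"obius inversion combined with the local stalk computation of [dCMM, Corollary 4.7], identifies the result with Ehrhart-series data of the dilated simplices bounding the $\mathbb P^n$-polytope, and then carries out a substantial generating-function manipulation whose upshot is the identity $F(t)=(1+t+\cdots+t^{n-1})^{n+1}$, from which the statement follows by extracting the coefficients whose exponent is divisible by $n+1$. You correctly name this route and point to the relevant tool (the graded multiplicity formula of [dCMM]), but do not carry out the calculation, so the proposal is incomplete on both branches.
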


\begin{remark}
In fact, $a(i(n+1))$ are certain Hodge numbers of 
a smooth Calabi-Yau hypersurface in $\mathbb P^n$
(see Remark \ref{6.27}).
Once we derive a general formula for the graded dimension of 
$\HH^\bullet(\widetilde{X^\vee_{\mathcal T}}, \mathbb Q)/ 
\text{ker} [\widetilde{Y^\vee_{\mathcal T}}]$ 
in Theorem \ref{general formula}, 
the above identity follows as a corollary. 
\end{remark}

\begin{proof}
The proof again uses the decomposition theorem, 
but this time we keep track of the cohomological grading.
Let $t$ be an indeterminate, 
and form the generating series
\begin{eqnarray}
G(t) & = & \sum_{i \geq 0} \dim (\HH^\bullet(\widetilde{X^\vee_{\mathcal T}}, \mathbb Q)/ \text{ker} [\widetilde{Y^\vee_{\mathcal T}}])_i \,\, t^i.
\nonumber
\end{eqnarray}
It follows from Corollary \ref{cupping ds} and the decomposition (\ref{cohomology}) that
\begin{eqnarray}
G(t) & = & \sum_{i} \dim (\mathbb H^\bullet(X^\vee, R \pi^\vee_* \underline{\mathbb Q}_{\widetilde{X^\vee_{\mathcal T}}})/ \text{ker} [Y^\vee])_i \,\, t^i
\nonumber \\
& = & \sum_{\tau \in \Sigma_\triangle} \sum_{b \in \mathbb Z}
s_{\tau, b} \sum_i \dim (\HH^{\bullet-\dim(\tau)-b}(V(\tau), \mathbb Q)/\text{ker} [Y^\vee])_i \,\, t^i.
\nonumber
\end{eqnarray}
We have seen in (\ref{icoc}) that, 
for each $\tau \in \Sigma_\triangle$, 
the orbit closure $V(\tau)$ has the same rational (intersection) cohomology as the projective space of equal dimension.
The hard Lefschetz theorem implies that 
cupping with the ample class $[Y^\vee]$ 
kills the top-graded component of $(\II) \! \HH^\bullet(V(\tau), \mathbb Q)$ 
and nothing more.
Hence
\begin{eqnarray}
\sum_i \dim (\HH^{\bullet-\dim(\tau)-b}(V(\tau), \mathbb Q)/\text{ker} [Y^\vee])_i \,\, t^i
& = & t^{\dim(\tau)+b} (1+t^2 + \cdots +t^{2(n-\dim(\tau)-1)})
\nonumber
\end{eqnarray}
when $\dim(\tau)\leq n-1$, and $0$ when $\dim(\tau)=n$.
It follows that
\begin{eqnarray}
G(t) & = & 
\sum_{\tau \in \Sigma_\triangle, \dim(\tau) \leq n-1} 
\left( \sum_{b \in \mathbb Z} s_{\tau, b} \,\, t^{\dim(\tau)+b} \right)
\sum_{j=0}^{n-\dim(\tau)-1} t^{2j}.
\label{form1}
\end{eqnarray}
To compute $\sum_{b \in \mathbb Z} s_{\tau, b} \,\, t^{\dim(\tau)+b}$, 
we take the stalks of (\ref{cohomology}) at any point $x_\tau$ in the orbit $O(\tau)$:
\begin{eqnarray}
(R\pi^\vee_* \underline{\mathbb Q}_{\widetilde{X^\vee_{\mathcal T}}})_{x_\tau}
& \cong & 
\bigoplus_{\sigma \subseteq \tau} \bigoplus_{b \in \mathbb Z} 
\mathbb Q^{\oplus s_{\sigma, b}} [-\dim(\sigma)-b].
\nonumber
\end{eqnarray}
By proper base change, 
the Poincar\'{e} polynomial of the left-hand side is
\begin{eqnarray}
\sum_i \dim ( R^i\pi^\vee_* \underline{\mathbb Q}_{\widetilde{X^\vee_{\mathcal T}}} )_{x_\tau}  
\,\, t^i & = &
\sum_i \dim \HH^i({\pi^\vee}^{-1} (x_\tau), \mathbb Q) \,\, t^i,
\nonumber
\end{eqnarray}
which is equal to the Poincar\'{e} polynomial of the right-hand side
$\sum_{\sigma \subseteq \tau} \left( \sum_{b\in \mathbb Z} s_{\sigma, b} \,\, t^{\dim(\sigma)+b} \right)$.
By the Mobius inversion formula, we have
\begin{eqnarray}
\sum_{b \in \mathbb Z} s_{\tau, b} \,\, t^{\dim(\tau)+b} & = &
\sum_{\sigma \subseteq \tau} (-1)^{\dim(\tau)-\dim(\sigma)} 
\sum_i \dim \HH^i({\pi^\vee}^{-1}(x_\sigma), \mathbb Q) \,\, t^i
\label{form2}
\end{eqnarray}
(see [dCMM, Section 6]).
It follows from [dCMM, Corollary 4.7] that
\begin{eqnarray}
\sum_i \dim \HH^i({\pi^\vee}^{-1}(x_\sigma), \mathbb Q) \,\, t^i 
& = & \sum_{\ell \geq 0} d_\ell(\widetilde{X^\vee_{\mathcal T}}/\sigma) 
(t^2-1)^\ell.
\nonumber
\end{eqnarray}
Suppose $\dim \sigma \geq 1$, and let 
$\theta$ be the face of $\triangle$ supporting $\sigma$.
The regular triangulation $\mathcal T$ 
induces a regular triangulation 
$\mathcal T_\theta$ of $\theta$.
Here, regular means each simplex in $\mathcal T_\theta$ has normalized volume $1$.
Then $d_\ell(\widetilde{X^\vee_{\mathcal T}}/ \sigma)$ 
is the number of $(\dim(\sigma)-\ell-1)$-dimensional simplices 
in $\mathcal T_\theta$ that are not contained in the boundary of $\theta$.
It turns out that these numbers are independent of the regular triangulation $\mathcal T$.
More precisely, let
\begin{eqnarray}
P_\theta(t) & := & \sum_{k \geq 0} l(k \theta) t^k
\nonumber
\end{eqnarray}
be the Ehrhart series of $\theta$
where $l(k \theta)$ denotes the number of lattice points in $k \theta$, 
then 
\begin{eqnarray}
(1-t)^{\dim \sigma} P_\theta(t) & = &
\sum_{\ell=0}^{\dim \sigma-1} d_\ell(\widetilde{X^\vee_{\mathcal T}}/ \sigma) \, \, (t-1)^\ell
\nonumber
\end{eqnarray}
(see the proof of [BD, Theorem 4.4], also Proposition \ref{Ehrhart}).
If $\dim \sigma=m \geq 1$, 
then $\theta = (n+1) \theta'$
where $\theta'$ is an $(m-1)$-dimensional regular simplex.
It follows that
\begin{eqnarray}
l(k \theta) & = &  l(k (n+1) \theta') 
\quad = \quad { k(n+1) + m-1 \choose m-1}.
\nonumber
\end{eqnarray}
Set 
\begin{eqnarray}
g_m(t) & := &
(1-t)^m \sum_{k \geq 0} { k(n+1) + m-1 \choose m-1} \, t^k, 
\qquad m \geq 1, 
\nonumber
\end{eqnarray}
then for any $\sigma \in \Sigma_\triangle$ with $\dim \sigma \geq 1$, 
we have
\begin{eqnarray}
\sum_i \dim \HH^i({\pi^\vee}^{-1}(x_\sigma), \mathbb Q) \,\, t^i 
& = & g_{\dim \sigma} (t^2).
\nonumber
\end{eqnarray}
This formula also holds for the unique $0$-dimensional cone
if we set $g_0(t) =1$.
From (\ref{form2}), we now have
\begin{eqnarray}
\sum_{b \in \mathbb Z} s_{\tau, b} \,\, t^{\dim(\tau)+b} & = &
\sum_{j=0}^{\dim \tau} (-1)^{\dim \tau-j} {\dim \tau \choose j} g_j(t^2).
\nonumber
\end{eqnarray}
Plugging the above into (\ref{form1}) yields
\begin{eqnarray}
G(t) & = & 
\sum_{i=0}^{n-1} {n+1 \choose i}
\left( \sum_{j=0}^i (-1)^{i-j} {i \choose j} g_j(t^2) \right)
\left( \sum_{k=0}^{n-i-1} t^{2k} \right).
\nonumber
\end{eqnarray}
Set 
\begin{eqnarray}
\tilde G(t) & :=& 
\sum_{i=0}^{n-1} {n+1 \choose i}
\left( \sum_{j=0}^i (-1)^{i-j} {i \choose j} g_j(t) \right)
\left( \sum_{k=0}^{n-i-1} t^{k} \right), 
\nonumber
\end{eqnarray}
then $G(t) = \tilde G(t^2)$.

For $0\leq j\leq n-1$, the coefficient of $g_j(t)$ in $\tilde G(t)$ is
\begin{eqnarray}
& & \sum_{i=j}^{n-1} {n+1 \choose i} (-1)^{i-j} {i \choose j}
\left( \sum_{k=0}^{n-i-1} t^{k} \right)
\nonumber \\
& = &  {n+1 \choose j} \sum_{i=j}^{n-1} (-1)^{i-j} {n+1-j \choose i-j} 
\left( \sum_{k=0}^{n-i-1} t^{k} \right) 
\nonumber \\
& = & {n+1 \choose j} \sum_{i=0}^{n-j-1} (-1)^i {n-j+1 \choose i} 
\left( \sum_{k=0}^{n-i-j-1} t^{k} \right) 
\nonumber \\
& = & {n+1 \choose j} \sum_{k=0}^{n-j-1} 
\left( \sum_{i=0}^{n-j-1-k} (-1)^i {n-j+1 \choose i} \right) \, t^k
\nonumber \\
& = & {n+1 \choose j} \underbrace{\sum_{k=0}^{n-j-1} 
(-1)^{n-j-1-k} {n-j \choose k+1} t^k}_I
\nonumber
\end{eqnarray}
where the first equality uses ${n+1 \choose i} {i \choose j} = {n+1 \choose j}{n+1-j \choose i-j}$,
and the last equality uses 
$\sum_{i=0}^b (-1)^i {a \choose i} = (-1)^b {a-1 \choose b}$
for $a > b \geq 0$.
Note that 
\begin{eqnarray}
I & = & \frac{\sum_{k=1}^{n-j} (-1)^{n-j-k} {n-j \choose k} t^k}{t}
\quad = \quad \frac{ (t-1)^{n-j} - (-1)^{n-j} }{t}
\nonumber \\
& = & (-1)^{n-j-1} \frac{ 1- (1-t)^{n-j} }{1-(1-t)} 
\quad = \quad (-1)^{n-j-1} \sum_{i=0}^{n-j-1} (1-t)^i,
\nonumber
\end{eqnarray}
hence, to summarize, the coefficient of $g_j(t)$ in $\tilde G(t)$ is 
$\displaystyle{(-1)^{n-j-1} {n+1 \choose j} \sum_{i=0}^{n-j-1} (1-t)^i}$.

By the definition of $g_j(t)$, we now have
\begin{eqnarray}
\tilde G(t) & = & (-1)^{n-1} \sum_{i=0}^{n-1}  (1-t)^i
\nonumber \\
& & \quad + \quad 
\sum_{j=1}^{n-1} (-1)^{n-j-1} {n+1 \choose j} \left( \sum_{i=0}^{n-j-1} (1-t)^i \right)
(1-t)^j \sum_{k \geq 0} {k(n+1) +j-1 \choose j-1} t^k
\nonumber \\
& = & (-1)^{n-1} \sum_{i=0}^{n-1}  (1-t)^i 
\nonumber \\
& & \quad + \quad
\sum_{j=1}^{n-1} (-1)^{n-j-1} {n+1 \choose j} \left( \sum_{i=j}^{n-1} (1-t)^i \right) 
\sum_{k \geq 0} {k(n+1) +j-1 \choose j-1} t^k.
\nonumber
\end{eqnarray}

The goal is to prove
$G(t) = \displaystyle{\sum_{i=0}^{n-1} a(i(n+1)) t^{2i}}$ or
$\tilde G(t) = \displaystyle{\sum_{i=0}^{n-1} a(i(n+1)) t^i}$.
It is equivalent to prove 
\begin{eqnarray}
\tilde G(t^{n+1}) & = & \sum_{i=0}^{n-1} a(i(n+1)) t^{i(n+1)}.
\nonumber
\end{eqnarray}
The right-hand side is obtained from the generating series
\begin{eqnarray}
\sum_{s=0}^{(n-1)(n+1)} a(s) t^s & = & (1+t+t^2+\cdots+t^{n-1})^{n+1}
\nonumber
\end{eqnarray}
by deleting those monomials $a(s) t^s$ for which $n+1 \nmid s$.
Since
\begin{eqnarray}
\tilde G(t^{n+1}) & = & 
(-1)^{n-1} \sum_{i=0}^{n-1}  (1-t^{n+1})^i 
\nonumber \\
& & \quad + \quad
\sum_{j=1}^{n-1} (-1)^{n-j-1} {n+1 \choose j} \left( \sum_{i=j}^{n-1} (1-t^{n+1})^i \right) 
\sum_{k \geq 0} {k(n+1) +j-1 \choose j-1} t^{k(n+1)}, 
\nonumber
\end{eqnarray}
we put
\begin{eqnarray}
F(t) & := & 
(-1)^{n-1} \sum_{i=0}^{n-1}  (1-t^{n+1})^i 
\nonumber \\
& & \quad + \quad
\sum_{j=1}^{n-1} (-1)^{n-j-1} {n+1 \choose j} \left( \sum_{i=j}^{n-1} (1-t^{n+1})^i \right) 
\underbrace{\sum_{k \geq 0} {k +j-1 \choose j-1} t^{k}}_{II}.
\nonumber
\end{eqnarray}
Then $\tilde G(t^{n+1})$ is obtained from $F(t)$ by throwing out monomials with exponents not divisible by $n+1$.
We conclude that it suffices to prove
\begin{eqnarray}
F(t) & = & (1+t+t^2+\cdots+t^{n-1})^{n+1}.
\nonumber
\end{eqnarray}

Since $II = \displaystyle{\frac{1}{(1-t)^j}}$, 
we have
\begin{eqnarray}
F(t) & = & 
\sum_{j=0}^{n-1} (-1)^{n-j-1} {n+1 \choose j} 
\left( \sum_{i=j}^{n-1} (1-t^{n+1})^i \right) \frac{1}{(1-t)^j}
\nonumber \\
& = & 
\sum_{j=0}^{n-1} (-1)^{n-j-1} {n+1 \choose j} 
\sum_{i=j}^{n-1} (1+t+\cdots+t^n)^i (1-t)^{i-j}
\nonumber \\
& = & \sum_{i=0}^{n-1} \, \square^i \,\, \sum_{j=0}^i (-1)^{n-j-1} {n+1 \choose j} (1-t)^{i-j}
\nonumber
\end{eqnarray}
where $\square = 1+t+\cdots+t^n$.
Using the identity
\begin{eqnarray}
\sum_{i=0}^m {M \choose i} (t-1)^{m-i} & = &
\sum_{j=0}^m {M-1-j \choose m-j} t^j
\quad \quad \text{for } 0 \leq m < M
\nonumber
\end{eqnarray}
which, for example, 
can be proved by induction on $M-m$, 
we get
\begin{eqnarray}
F(t) & = & 
\sum_{i=0}^{n-1} (-1)^{n-i-1} \square^i \sum_{j=0}^i {n-j \choose i-j} t^j
\quad = \quad 
\sum_{j=0}^{n-1} \left( \sum_{i=j}^{n-1} (-1)^{n-i-1} \square^i {n-j \choose i-j} \right) t^j
\nonumber \\
& = &
\sum_{j=0}^{n-1} \square^j 
\left( \sum_{i=0}^{n-j-1} (-1)^{n-i-j-1} \square^i {n-j \choose i} \right) t^j
\nonumber \\
& = & 
\sum_{j=0}^{n-1} t^j \square^j
[ \square^{n-j} - (\square-1)^{n-j} ]
\nonumber \\
& = & 
\sum_{j=0}^{n-1} ( t^j \square^n - t^n \square^j \triangle^{n-j} )
\qquad \qquad \qquad \text{where   } \triangle = 1+t+\cdots+t^{n-1}
\nonumber \\
& = & 
\triangle \square^n - (\square-\triangle) \sum_{j=0}^{n-1} \square^j \triangle^{n-j}
\nonumber \\
& = & 
\triangle \square^n - (\triangle \square^n - \triangle^{n+1}) 
\quad = \quad \triangle^{n+1}.
\nonumber
\end{eqnarray}
We are done.
\end{proof}

\section{A general formula}

In this section, we prove a general formula for the rank of the cupping map (\ref{cupping}).
This is one of the crucial steps towards proving the general hyperplane conjecture.

Recall that $\triangle, \triangle^\vee$ are a pair of $n$-dimensional reflexive polytopes, and 
$X=\mathbb P_\triangle$, $X^\vee = \mathbb P_{\triangle^\vee}$ are the associated projective toric varieties.
Let $Y$ be a $\triangle$-regular Calabi-Yau hypersurface in $X$, i.e. it intersects each toric orbit transversely, and similarly 
let $Y^\vee$ be a $\triangle^\vee$-regular Calabi-Yau hypersurface in $X^\vee$.
Suppose $\mathcal T$, resp. $\mathcal T^\vee$,
is a regular projective triangulation of $\triangle$, resp. $\triangle^\vee$.
Denote by $\Sigma_{\triangle, \mathcal T}$ 
and $\Sigma_{\triangle^\vee, \mathcal T^\vee}$
the refinements of the fans $\Sigma_\triangle$ and $\Sigma_{\triangle^\vee}$
that $\mathcal T$ and $\mathcal T^\vee$ induce.
Let $\pi: \widetilde X \to X$ and 
$\pi^\vee: \widetilde{X^\vee} \to X^\vee$
be the corresponding projective crepant resolutions
where $\widetilde X = \mathbb P_{\Sigma_{\triangle^\vee, \mathcal T^\vee}}$ and 
$\widetilde{X^\vee} = \mathbb P_{\Sigma_{\triangle, \mathcal T}}$.
Also, 
let $\widetilde Y$ and $\widetilde {Y^\vee}$ be the inverse images of 
$Y$ and $Y^\vee$ under $\pi$ and $\pi^\vee$.
Note that both $\widetilde Y$ and $\widetilde{Y^\vee}$ are smooth Calabi-Yau hypersurfaces
and they intersect the orbits of $\widetilde X$ and $\widetilde{X^\vee}$ 
in smooth subvarieties of codim 1 or the empty set.

As seen in Section 4, 
the number of linearly independent period integrals for the $\widetilde Y$-family
predicted by the hyperplane conjecture is equal to the rank of 
$\smile\!\! [\widetilde{Y^\vee}]: \HH^\bullet(\widetilde{X^\vee}, \mathbb Q) \to \HH^\bullet(\widetilde{X^\vee}, \mathbb Q)$, 
or equivalently the rank of 
$\smile\!\! [Y^\vee]: \mathbb H^\bullet( X^\vee, R \pi^\vee_* \underline{\mathbb Q}_{\widetilde{X^\vee}})
\to \mathbb H^\bullet( X^\vee, R \pi^\vee_* \underline{\mathbb Q}_{\widetilde{X^\vee}})$. 

\begin{notation}
For our purposes, 
it is more convenient to consider the shifted IC-complex
$\mathcal I_Z := \IC_Z[-\dim Z]$
of a complex algebraic variety $Z$.
Then $\IHH^\bullet(Z, \mathbb Q) = \mathbb H^\bullet(Z, \mathcal I_Z)$.
\end{notation}

Applying the decomposition theorem \ref{toric fibration} to the map 
$\pi^\vee: \widetilde{X^\vee} \to X^\vee$ 
and applying the cohomological shifts, 
we obtain
\begin{eqnarray}
R\pi^\vee_* \underline{\mathbb Q}_{\widetilde{X^\vee}} & \cong &
\bigoplus_{\sigma \in \Sigma_\triangle} \bigoplus_{b \in \mathbb Z}
\mathcal I_{V(\sigma)}^{ \oplus s_{\sigma, b}} [ -\dim \sigma -b].
\nonumber
\end{eqnarray}

\begin{notation}
\label{multiplicity series}
Define $S_\sigma(t): = \sum_{b \in \mathbb Z} s_{\sigma, b} t^{\dim \sigma + b}$
for any $\sigma \in \Sigma_\triangle$.
Note that $S_\sigma(t)$ encodes the multiplicity with which $\mathcal I_{V(\sigma)}$ 
appears in the decomposition of 
$R\pi^\vee_* \underline{\mathbb Q}_{\widetilde{X^\vee}}$
according to the cohomological grading.
It differs from the $S$-polynomial in [dCMM, Section 7] by a degree shift.
\end{notation}

\begin{notation}
Define $P_{\pi^\vee, \tau}(t): = 
\sum_{k \in \mathbb Z} \mathcal H^k( R\pi^\vee_* \underline{\mathbb Q}_{\widetilde{X^\vee}} )_{x_\tau} t^k$
where $\tau \in \Sigma_\triangle$ and 
$x_\tau$ is an arbitrary point in the orbit $O(\tau)$ of $X^\vee$.
By proper base change, one has 
$P_{\pi^\vee, \tau}(t) = \sum_{k \in \mathbb Z}
\HH^k( {\pi^\vee}^{-1} (x_\tau), \mathbb Q ) t^k$.
This also differs from the $P$-polynomial in [dCMM, Section 7] by a degree shift.
\end{notation}

\begin{notation}
For a pair of cones $\sigma \subseteq \tau$ in $\Sigma_\triangle$, 
define $R_{\sigma, \tau} (t): = \sum_{k \in \mathbb Z} 
\mathcal H^k( \mathcal I_{V(\sigma)} )_{x_\tau} t^k$.
This is known as the local intersection cohomology Poincar\'{e} polynomial of $V(\sigma)$
(see [F]).
Again, our $R_{\sigma, \tau}(t)$ differs from the $R$-polynomial in 
[dCMM, Section 7] by a shift in degree.
\end{notation}

With the shifted notations, we now have
\begin{eqnarray}
P_{\pi^\vee, \tau}(t) & = & 
\sum_{\sigma \subseteq \tau, \sigma \in \Sigma_\triangle} S_\sigma(t) R_{\sigma, \tau}(t)
\nonumber
\end{eqnarray}
for any $\tau \in \Sigma_\triangle$
(see the proof of [dCMM, Theorem 7.3]).

All these polynomials admit combinatorial descriptions.
We recall some basic facts about partially ordered sets (poset), 
and certain polynomials associated to a finite Eulerian poset
(see [S1], [S3], [BBo2, Section 2], [dCMM, Section 6]).

Let $(\mathcal P, \leq)$ be a finite poset, and let 
$\text{Int}(\mathcal P) = \{ [a, b]: a \leq b, a, b \in \mathcal P \}$ 
denote the set of intervals of $\mathcal P$.
Let $K$ be a commutative ring with identity.
One defines the \emph{incidence algebra}
$\mathbb I(\mathcal P, K)$ consisting of the set of functions
$f: \text{Int}(\mathcal P) \to K$, 
$[a, b] \mapsto f(a, b)$, 
equipped with the \emph{convolution} product
\begin{eqnarray}
f \ast g (a, b) & := &
\sum_{a \leq c \leq b} f(a, c) g(c, b).
\nonumber
\end{eqnarray}
This gives $\mathbb I(\mathcal P, K)$ the structure of an associative $K$-algebra
with an identity element $\delta$, 
defined by $\delta(a, b)=1$ if $a=b$, 
and $\delta(a, b)=0$ otherwise.
It is easy to prove that $f$ is invertible in $\mathbb I(\mathcal P, K)$ if and only if 
$f(a, a)$ is a unit of $K$ for every $a \in \mathcal P$.
Furthermore, if $f$ is invertible with inverse $g$ and 
$\phi, \psi: \mathcal P \to K$ are functions such that 
$\phi(a) = \sum_{b \leq a} \psi(b) f(b, a)$, 
then $\psi(b) = \sum_{a \leq b} \phi(a) g(a, b)$.
For our purposes, $K$ could be $\mathbb Z$ or $\mathbb Z[t]$.

From now on, we also assume that the finite poset $(\mathcal P, \leq)$ 
has a unique minimal element $\hat 0$,
a unique maximal element $\hat 1$, 
and that every maximal chain in $\mathcal P$ has the same length $d$.
We call $d$ the \emph{rank} of $\mathcal P$ and denote it by 
$\text{rk} (\mathcal P)$.
All the posets we consider in this section satisfy this condition.
If $\mathcal P$ is such a finite poset, 
then every closed interval $[a, b]$ of $\mathcal P$ also satisfies this condition.
Define the rank function as follows
$\rho: \mathcal P \to \{0, 1, 2, \ldots, d \}$; 
$a \mapsto \text{rk} [\hat 0, a]$.
Then $\text{rk} [a, b] = \rho(b) - \rho(a)$ for all $a \leq b$.

Let $(\mathcal P, \leq)$ be a finite poset as above, 
and let $K = \mathbb Z$.
The zeta function $\zeta: \text{Int} (\mathcal P) \to \mathbb Z$
given by $\zeta(a, b) =1$ for all $[a, b] \in \text{Int} (\mathcal P)$
is obviously invertible with respect to the convolution product
of $\mathbb I(\mathcal P, \mathbb Z)$.
The inverse of $\zeta$, denoted by 
$\mu_{\mathcal P}$, is called the 
\emph{Mobius function} of $\mathcal P$.
The poset $\mathcal P$ is called \emph{Eulerian} if 
$\mu_{\mathcal P} (a, b) = (-1)^{\rho(b)-\rho(a)}$ for all $a \leq b$ in $\mathcal P$.
An equivalent characterization of a finite Eulerian poset
is that each of its nontrivial closed intervals contains 
equal numbers of even and odd rank elements.

If $(\mathcal P, \leq)$ is Eulerian, 
then every interval $[a, b]$ of $\mathcal P$ is Eulerian.
If $\mathcal P$ is Eulerian of rank $d$, 
then the dual poset $\mathcal P^*$ is also an Eulerian poset of rank $d$
with rank function $\rho^*(a) = d - \rho(a)$.

\begin{example}
[{[BBo2, Example 2.3]}]
\label{face poset}
Let $C$ be a $d$-dimensional finite strongly convex polyhedral cone in $\mathbb R^d$.
Then the face poset $\mathcal P_C$ of $C$ satisfies all of the above assumptions 
with minimal element $\{0\}$, maximal element $C$, and is Eulerian of rank $d$.
The rank function $\rho$ is equal to the dimension of the face.
Similarly, 
if $\triangle$ is a full-dimensional polytope in $\mathbb R^{d-1}$, 
then the face poset of $\triangle$, 
regarding both the empty set and $\triangle$ as faces of $\triangle$, 
is Eulerian of rank $d$ with the rank function equal to the dimension of the face plus 1
using the convention $\dim \emptyset =-1$.
\end{example}

\begin{define}
[{[S1, \S 2], [BBo2, Definition 2.4]}]
Let $\mathcal P = [\hat 0, \hat 1]$ be an Eulerian poset of rank $d$.
Define two polynomials $H(\mathcal P, t)$, $G(\mathcal P, t)$ in $\mathbb Z[t]$
by the following recursive rules:
\begin{eqnarray}
G(\mathcal P, t) & = & H(\mathcal P, t) \quad = \quad 1 \qquad \qquad \text{if } d=0,
\nonumber \\
H(\mathcal P, t) & = & \sum_{\hat 0 <a \leq \hat 1} (t-1)^{\rho(a)-1} G([a, \hat 1], t) 
\qquad \qquad \text{for  } d>0
\nonumber \\
G(\mathcal P, t) & = & \tau_{<d/2} ( (1-t) H(\mathcal P, t) ) 
\qquad \qquad \text{for  } d>0
\nonumber
\end{eqnarray}
where $\tau_{<r}$ denotes the truncation operator 
$\mathbb Z[t] \to \mathbb Z[t]$ defined by 
$\tau_{<r} (\sum_i a_i t^i) = \sum_{i<r} a_i t^i$.
\end{define}

\begin{remark}
It is easy to check that if $\mathcal P= \{ \hat 0, \hat 1\}$ is an Eulerian poset of rank $1$,
then one has $G(\mathcal P, t) = H(\mathcal P, t) = 1$.
If $\mathcal P$ is a finite Eulerian poset of rank $2$, 
in which case $\mathcal P = \{ \hat 0, \alpha, \beta, \hat 1\}$
with $\alpha$ and $\beta$ incomparable and $\rho(\alpha) = \rho(\beta)=1$, 
then $H(\mathcal P, t) = 1+t$ and $G(\mathcal P, t) = 1$.
\end{remark}

\begin{theorem}
[{[S1, Theorem 2.4], [BBo2, Theorem 2.5]}]
Let $\mathcal P$ be a finite Eulerian poset of rank $d\geq 1$, 
then $H(\mathcal P, t) = t^{d-1} H(\mathcal P, \frac{1}{t})$.
\end{theorem}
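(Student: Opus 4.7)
The plan is to proceed by induction on $d = \text{rk}(\mathcal{P})$. The base case $d=1$ is immediate: then $\mathcal{P} = \{\hat{0}, \hat{1}\}$ and the recursion gives $H(\mathcal{P}, t) = (t-1)^{0} G([\hat{1}, \hat{1}], t) = 1$, which trivially satisfies $1 = t^{0} \cdot 1$. For the inductive step, fix $d \geq 2$ and assume the palindromic identity holds for every Eulerian poset of rank strictly less than $d$.

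My strategy is to form the "defect" polynomial
\[
K(\mathcal{P}, t) := H(\mathcal{P}, t) - t^{d-1} H(\mathcal{P}, 1/t)
\]
and show it vanishes by expanding both terms with the defining recursion. Substituting $t \mapsto 1/t$ in
$H(\mathcal{P}, t) = \sum_{\hat{0} < a \leq \hat{1}} (t-1)^{\rho(a)-1} G([a, \hat{1}], t)$
and multiplying by $t^{d-1}$ yields
\[
t^{d-1} H(\mathcal{P}, 1/t) = \sum_{\hat{0} < a \leq \hat{1}} (-1)^{\rho(a)-1} (t-1)^{\rho(a)-1} \, t^{d-\rho(a)} G([a, \hat{1}], 1/t),
\]
so $K(\mathcal{P}, t)$ is a single sum over $a > \hat{0}$ of terms involving the difference $G([a, \hat{1}], t) - (-1)^{\rho(a)-1} t^{d-\rho(a)} G([a, \hat{1}], 1/t)$. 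Each interval $[a, \hat{1}]$ is Eulerian of rank $d - \rho(a) < d$, so the inductive hypothesis applies to $H([a, \hat{1}], t)$; the defining identity $G([a,\hat 1], t) = \tau_{<(d-\rho(a))/2}((1-t) H([a,\hat 1], t))$ then gives explicit control of $t^{d-\rho(a)} G([a, \hat{1}], 1/t)$ in terms of $H([a, \hat{1}], t)$ and an upper-truncation.

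To process the resulting expression, I would introduce the companion identity obtained by Möbius-inverting the $H$-recursion in the incidence algebra $\mathbb{I}(\mathcal{P}, \mathbb{Z}[t])$. Writing the recursion as $h = \phi \ast g$ where $\phi([a,b]) = (t-1)^{\rho(b)-\rho(a)-1}$ for $a < b$, the inverse $\phi^{-1}$ can be computed using the Eulerian property $\mu_{\mathcal{P}}(a, b) = (-1)^{\rho(b)-\rho(a)}$, producing a dual expression of $G([a, \hat{1}], t)$ as a sum of $H$-polynomials of subintervals with signs $(-1)^{\rho(b)-\rho(a)}$. Substituting this dual expression into the formula for $K(\mathcal{P}, t)$ rewrites it as a double sum over chains $\hat{0} < a \leq b \leq \hat{1}$, which, after exchanging the order of summation, collapses by the definition of $G$ together with the truncation degree bound $\deg G([a,\hat 1], t) < (d-\rho(a))/2$.

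The main obstacle will be this last telescoping step: verifying that after the double-sum rearrangement, the high-degree and low-degree contributions cancel cleanly. The cancellation is driven by the Eulerian signs $(-1)^{\rho(b)-\rho(a)}$, but one must carefully track how the truncation operator $\tau_{<r}$ interacts with the substitution $t \mapsto 1/t$ and the multiplication by $t^{d-1}$; in particular, the identity $\tau_{<r}(p(t)) = p(t) - \tau_{\geq r}(p(t))$ is needed to convert the palindromic hypothesis for $H([a,\hat 1], t)$ into a usable statement about $G([a,\hat 1], t)$. The degree bound $\deg G < d/2$ built into the defining recursion is crucial: it forces $G(\mathcal{P}, t)$ to be uniquely determined by the "lower half" of $(1-t)H(\mathcal{P}, t)$, so once $K(\mathcal{P}, t)$ is recognized as a polynomial with vanishing lower half (by the induction) and vanishing upper half (by the palindromic form of the bookkeeping), it must be identically zero.
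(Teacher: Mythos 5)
The paper states this result with citations to [S1, Theorem~2.4] and [BBo2, Theorem~2.5] but gives no proof, so your proposal is being judged on its own terms. Your high-level plan (induction on rank, expand the defining recursion, exploit Eulerian cancellation) is the right one and something close to it does work, but there are two genuine gaps.

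First, the M\"obius-inversion step is not valid as described. Writing $h = \phi \ast g$ with $\phi(a,b) = (t-1)^{\rho(b)-\rho(a)-1}$ for $a<b$ forces $\phi(a,a)=0$ (the $c=a$ term must not appear in the recursion for $H$), and an element of the incidence algebra is invertible precisely when its diagonal entries are units. So $\phi$ has no inverse, and there is no ``dual expression of $G([a,\hat 1],t)$ as a signed sum of $H$-polynomials of subintervals.'' Indeed $G$ is \emph{not} obtained from $H$ by any convolution formula; it is obtained by the truncation $G(\mathcal P,t) = \tau_{<d/2}\bigl((1-t)H(\mathcal P,t)\bigr)$, and truncation does not commute with convolution. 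This is the crux of why your telescoping paragraph remains hand-waving: the claimed companion identity does not exist.

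Second, you invoke the inductive hypothesis on $[a,\hat 1]$ ``for every $a > \hat 0$,'' but when $a = \hat 1$ the interval has rank $0$, where the conclusion fails: $H(\{\hat 1\},t)=1$ while $t^{-1}H(\{\hat 1\},1/t)=t^{-1}$. Concretely, the relation you are using, namely $t^{d-\rho(a)}G([a,\hat 1],1/t) = G([a,\hat 1],t)-(1-t)H([a,\hat 1],t)$, is only valid for $\hat 0 < a < \hat 1$; for $a=\hat 1$ the left side is $1$ and the right side is $t$. This boundary term must be split off, and if you carry it along naively the final cancellation leaves an extra $(-1)^{d-1}(t-1)^d$ that does not vanish.

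The argument can be completed without M\"obius inversion. After rewriting $t^{d-1}H(\mathcal P,1/t)=\sum_{\hat 0 < a \leq \hat 1}(-1)^{\rho(a)-1}(t-1)^{\rho(a)-1}\,t^{d-\rho(a)}G([a,\hat 1],1/t)$ exactly as you did, substitute the inductive relation above only for $\hat 0 < a < \hat 1$ and keep the $a=\hat 1$ term as $(-1)^{d-1}(t-1)^{d-1}\cdot 1$. The $H([a,\hat 1],t)$ that now appears should itself be expanded by the defining recursion $H([a,\hat 1],t)=\sum_{a<b\leq\hat 1}(t-1)^{\rho(b)-\rho(a)-1}G([b,\hat 1],t)$. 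Swapping the double sum over $\hat 0 < a < b \leq \hat 1$ and using the Eulerian identity $\sum_{\hat 0 < a < b}(-1)^{\rho(a)-1} = 1 + (-1)^{\rho(b)}$ (an immediate consequence of $\mu(\hat 0,b)=(-1)^{\rho(b)}$) collapses everything: the ``$1$'' reproduces $H(\mathcal P,t)$, and the ``$(-1)^{\rho(b)}$'' exactly cancels the $G$-sum (including the separated $a=\hat 1$ term). This is where the Eulerian hypothesis enters, rather than through an inversion in the incidence algebra.
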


\begin{define}
[{[BBo2, Definition 3.19]}]
Let $\mathcal P$ be a finite Eulerian poset of rank $d \geq 1$.
For $d \geq 2$, 
define $H_{\text{Lef}}(\mathcal P, t)$ to be the polynomial of degree $\leq d-2$  
with the following properties:
\begin{enumerate}
\item
$H_{\text{Lef}}(\mathcal P, t) = t^{d-2} H_{\text{Lef}}(\mathcal P, \frac{1}{t})$, 

\item
$\tau_{\leq \frac{d-2}{2}} H_{\text{Lef}}(\mathcal P, t) = \tau_{\leq \frac{d-2}{2}} H(\mathcal P, t)$.
\end{enumerate}
For $d =1$, put $H_{\text{Lef}}(\mathcal P, t) = 0$.
\end{define}

\begin{lemma}
\label{H_lef}
Let $\mathcal P$ be a finite Eulerian poset of rank $d \geq 1$.
Then 
\begin{eqnarray}
H(\mathcal P, t) - t^{d-1} G(\mathcal P, \frac{1}{t}) \quad = \quad 
&  H_{\text{Lef}}(\mathcal P, t) & \quad =  \quad 
\frac{H(\mathcal P, t) - G(\mathcal P, t)}{t}.
\nonumber
\end{eqnarray}
\end{lemma}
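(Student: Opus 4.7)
The plan is to show that both candidate expressions coincide and that the common value satisfies the two defining properties of $H_{\text{Lef}}(\mathcal P, t)$, which characterize it uniquely.

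First I would set $F(t) := (1-t)H(\mathcal P, t)$, so that $G(\mathcal P, t) = \tau_{<d/2} F(t)$ by definition. Using the palindromic identity $H(\mathcal P, t) = t^{d-1} H(\mathcal P, 1/t)$, a short formal calculation yields $t^d F(1/t) = -F(t)$; in words, $F$ is \emph{anti}-palindromic of degree $d$. Writing $F = \sum f_i t^i$, this gives $f_i = -f_{d-i}$ for all $i$, and in particular $f_{d/2}=0$ when $d$ is even. The anti-palindromic structure then forces
\begin{equation*}
t^d G(\mathcal P, 1/t) \;=\; \sum_{i<d/2} f_i\, t^{d-i} \;=\; -\sum_{j>d/2} f_j\, t^j \;=\; G(\mathcal P, t) - F(t),
\end{equation*}
which, after rearranging and dividing by $t$, is precisely the identity $H(\mathcal P, t) - t^{d-1} G(\mathcal P, 1/t) = (H(\mathcal P, t) - G(\mathcal P, t))/t$. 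This proves the two candidate polynomials coincide; call the common value $Q(t)$.

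Next I would verify that $Q$ matches the two defining properties of $H_{\text{Lef}}(\mathcal P, t)$. From the form $(H-G)/t$: since $F(0)=H(0)$ and $G$ agrees with $F$ in every degree below $d/2$, the constant terms of $H$ and $G$ agree, so the quotient is a polynomial; the degree bounds $\deg H\leq d-1$ and $\deg G<d/2$ force $\deg Q \leq d-2$. For condition (2), I use the form $H - t^{d-1} G(1/t)$: because $\deg G < d/2$, the subtracted term $t^{d-1}G(1/t)$ has only monomials of degree $\geq \lfloor d/2 \rfloor$, so it cannot affect coefficients in degrees $\leq \lfloor (d-2)/2\rfloor$, giving $\tau_{\leq (d-2)/2} Q = \tau_{\leq (d-2)/2} H$. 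For condition (1), I apply $t \mapsto 1/t$ and multiply by $t^{d-2}$ to the form $(H-G)/t$, using $t^{d-1}H(1/t) = H(t)$, obtaining $t^{d-2} Q(1/t) = H(t) - t^{d-1} G(1/t) = Q(t)$. Since a palindromic polynomial of degree $\leq d-2$ is determined by its coefficients in degrees $\leq (d-2)/2$, properties (1) and (2) characterize $H_{\text{Lef}}(\mathcal P,t)$ uniquely, whence $Q(t) = H_{\text{Lef}}(\mathcal P,t)$.

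The main obstacle I anticipate is keeping the parity of $d$ straight: when $d$ is even, $\tau_{<d/2}$ and $\tau_{\leq (d-2)/2}$ coincide and anti-palindromicity of $F$ forces the middle coefficient $f_{d/2}$ to vanish, while when $d$ is odd one must split $F$ cleanly into halves $i\leq(d-1)/2$ and $i\geq(d+1)/2$ with no middle term to worry about, and compare $\tau_{<d/2}=\tau_{\leq(d-1)/2}$ to $\tau_{\leq(d-2)/2}=\tau_{\leq(d-3)/2}$. Once these parity-dependent identifications are settled, every remaining step reduces to formal polynomial manipulation.
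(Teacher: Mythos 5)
Your proof is correct, and the paper in fact states this lemma without proof (it sits between the cited [BBo2, Definition 3.19] and Lemma 6.10, with no argument supplied), so your derivation fills a real gap. The computation is sound: from the palindromic identity $H(\mathcal P, t)=t^{d-1}H(\mathcal P, 1/t)$ you correctly derive that $F=(1-t)H$ is anti-palindromic of degree $d$, that $G=\tau_{<d/2}F$ together with $f_{d/2}=0$ (when $d$ is even) yields $t^dG(1/t)=G-F$, and rearranging gives the equality of the two candidate polynomials; and the verification that the common value $Q$ has degree $\leq d-2$, is palindromic of degree $d-2$, and truncates to $H$ below the middle degree, indeed pins it down as $H_{\text{Lef}}$ by the uniqueness built into the definition. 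The edge case $d=1$ is also covered, since there $H=G=1$ forces $Q=0=H_{\text{Lef}}$, consistent with the general argument because a palindromic polynomial of degree $\leq -1$ must vanish. Your parity bookkeeping at the end is accurate and is the only place one could realistically slip.
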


\begin{lemma}
[{[S2, Section 8], [BoM, Lemma 11.2]}]
\label{magical formula}
Let $\mathcal P=[\hat 0, \hat 1]$ be a finite Eulerian poset of rank $d >0$, then
\begin{eqnarray}
\sum_{\hat 0 \leq a \leq \hat 1} G([\hat 0, a], t) (-1)^{\text{rk} [a, \hat 1]} G([a, \hat 1]^*, t) 
& = & 0
\nonumber
\end{eqnarray}
In particular, this implies that the function 
$[a, b] \mapsto G([a, b], t)$ in the incidence algebra $\mathbb I(\mathcal P, \mathbb Z[t])$
is invertible 
and the inverse is given by 
$[a, b] \mapsto (-1)^{\text{rk} [a, b]} G([a, b]^*, t)$.
\end{lemma}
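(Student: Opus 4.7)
I would prove this by strong induction on the rank $d$ of $\mathcal{P}$. The base case $d=1$ is immediate: for $\mathcal{P} = \{\hat 0, \hat 1\}$, the sum reduces to $G(\{\hat 0\},t)\cdot(-1)\cdot G(\mathcal{P}^*,t) + G(\mathcal{P},t)\cdot 1 \cdot G(\{\hat 1\},t) = -1+1 = 0$. For $d \geq 2$, fix $\mathcal{P}$ and assume the identity for every Eulerian poset of rank strictly less than $d$; since every closed interval of an Eulerian poset is again Eulerian, the inductive hypothesis applies to every proper sub-interval of $\mathcal{P}$.

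The principal algebraic ingredient is the reformulation of Lemma \ref{H_lef} as $(t-1)H(\mathcal{Q},t) = t^{\text{rk}(\mathcal{Q})}G(\mathcal{Q},1/t) - G(\mathcal{Q},t)$, applied to $\mathcal{Q}=\mathcal{P}$, to intervals $[a,\hat 1]$, and to the dual intervals $[a,\hat 1]^*$. Substituting the defining recursion for $H$ into it and rearranging yields the ``dual recursion''
$$t^{d}\, G(\mathcal{P}, 1/t) \;=\; \sum_{a \in \mathcal{P}}(t-1)^{\rho(a)}\, G([a,\hat 1],t),$$
together with its analog on the dual interval $[a,\hat 1]^*$ of rank $d-\rho(a)$,
$$t^{d-\rho(a)}\, G([a,\hat 1]^*, 1/t) \;=\; \sum_{c:\, a \leq c \leq \hat 1}(t-1)^{d-\rho(c)}\, G([a,c]^*,t).$$
Writing $T(\mathcal{P},t) := \sum_{a} G([\hat 0,a],t)(-1)^{d-\rho(a)} G([a,\hat 1]^*,t)$, my plan is to use these dual recursions to rewrite the factor $G([a,\hat 1]^*,t)$ inside $T(\mathcal{P},t)$, interchange the order of summation, and apply the inductive hypothesis to collapse every inner convolution over a proper sub-interval. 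The degree bound $\deg G(\mathcal{Q},t) < \text{rk}(\mathcal{Q})/2$ built into the definition of $G$ gives $\deg_t T(\mathcal{P},t) < d/2$, so any surviving terms from the substitution must lie in the complementary range $[d/2,\,d-1]$ and hence cannot contribute; this degree separation is what forces $T(\mathcal{P},t)=0$.

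The main obstacle will be keeping track of the truncation operator $\tau_{<d/2}$ embedded in the definition of $G$: the dual recursions above are honest polynomial identities, but because $G$ is extracted from $(1-t)H$ by truncation, one must carefully separate the ``low-degree'' and ``high-degree'' halves of the auxiliary polynomials and exhibit the cancellation on each half, invoking the palindromic symmetry $H(\mathcal{Q},t)=t^{\text{rk}(\mathcal{Q})-1}H(\mathcal{Q},1/t)$ as needed. This bookkeeping is the technical heart of Stanley's argument in [S2, \S 8], recast in [BoM, Lemma 11.2].

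The ``in particular'' assertion is then formal: define $\Psi \in \mathbb{I}(\mathcal{P}, \mathbb{Z}[t])$ by $\Psi(a,b) = (-1)^{\rho(b)-\rho(a)} G([a,b]^*,t)$. Because $G([a,a],t)=1$ is a unit in $\mathbb{Z}[t]$, the function $[a,b] \mapsto G([a,b],t)$ is invertible in the incidence algebra. Applying the main identity to each closed interval $[a,b] \subseteq \mathcal{P}$ (itself Eulerian) yields $(G*\Psi)(a,b)=0$ for $a<b$, while $(G*\Psi)(a,a)=1$ trivially; hence $G*\Psi = \delta$, identifying $\Psi$ as the two-sided inverse of $G$.
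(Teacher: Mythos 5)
The paper itself does not prove this lemma — it is cited from Stanley [S2, \S 8] and Borisov--Mavlyutov [BoM, Lemma 11.2] — so there is no internal proof to compare your attempt against. What I can assess is whether your proposal is a complete proof, and it is not.

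The base case, the ``dual recursion'' $t^d G(\mathcal{P},1/t) = \sum_a (t-1)^{\rho(a)} G([a,\hat 1],t)$ derived from Lemma~\ref{H_lef}, the degree bound $\deg_t T(\mathcal{P},t) < d/2$, and the formal ``in particular'' step are all sound. But the inductive step — the substance of the lemma — is announced rather than executed. You write that one should ``use these dual recursions to rewrite the factor $G([a,\hat 1]^*,t)$, interchange the order of summation, and apply the inductive hypothesis to collapse every inner convolution,'' and then concede that tracking the truncation $\tau_{<d/2}$ is ``the technical heart of Stanley's argument.'' That concession is exactly where the proof stops. Moreover, the plan as described does not even launch cleanly: the dual recursion you derived expresses $t^{d-\rho(a)} G([a,\hat 1]^*,1/t)$, not $G([a,\hat 1]^*,t)$, so substituting it into $T(\mathcal{P},t)$ requires replacing $t \mapsto 1/t$ first, which introduces terms like $G([a,c]^*,1/t)$ with negative powers of $t$; one must then invoke the palindromy $(t-1)H = t^{\mathrm{rk}} G(1/t) - G(t)$ again to re-express those, and it is precisely this back-and-forth between $G(t)$ and $t^{\mathrm{rk}}G(1/t)$ through the truncation that must be controlled to exhibit the cancellation. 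You have not shown that the inner sums that appear after the interchange are of the form covered by the inductive hypothesis (convolutions of $G$ with $(-1)^{\cdot}G(\,\cdot^*)$ over proper subintervals), nor that the surviving terms land in degrees $\geq d/2$. Until those two points are demonstrated, the degree-separation argument has nothing to bite on, and the proof remains a sketch of the known argument rather than a proof.

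If you want to complete this along the lines you set up, the cleanest route is to prove directly, by induction on $\mathrm{rk}[a,b]$, that the convolution $h(a,b) := \sum_{a \leq c \leq b} G([a,c],t)\,(-1)^{\mathrm{rk}[c,b]} G([c,b]^*,t)$ vanishes for $a<b$: establish a functional equation for $h(a,b)$ of the form $h(a,b)(t) = (-1)^{\mathrm{rk}[a,b]}\, t^{\mathrm{rk}[a,b]}\, h(a,b)(1/t)$ by pushing the palindromy of $H$ through the convolution, then combine it with $\deg h(a,b) < \mathrm{rk}[a,b]/2$ to conclude $h(a,b)=0$. That functional equation is the step your proposal gestures at but does not supply.
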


The following is well-known.

\begin{prop}
\label{Ehrhart}
Let $\triangle$ be a full dimensional lattice polytope in $\mathbb R^{d-1}$.
Define the Ehrhart series
$\text{Ehr}_\triangle(t) := \sum_{k \geq 0} l(k \triangle) t^k$.
Then
\begin{enumerate}
\item
$(1-t)^d \text{Ehr}_\triangle(t)$ is a polynomial of degree $\leq d-1$, which we denote by 
\emph{$\mathcal S(\triangle, t)$}.

\item
If $\mathcal T$ is a regular triangulation of $\triangle$ 
meaning that all simplices in $\mathcal T$ have normalized volume $1$, 
and let $a_i$ be the number of $i$-dimensional simplices in $\mathcal T$
which are not contained in the boundary of $\triangle$, 
then 
$\mathcal S(\triangle, t) =  a_{d-1}+ a_{d-2}(t-1) + \cdots + a_0 (t-1)^{d-1}$.

\end{enumerate}
\end{prop}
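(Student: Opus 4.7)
The plan is to deduce Part (1) from classical Ehrhart theory, and Part (2) from decomposing the interior lattice point count of $\triangle$ via $\mathcal T$, combined with Ehrhart--Macdonald reciprocity.

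For Part (1), I would invoke Ehrhart's theorem: for a full-dimensional lattice polytope $\triangle \subset \mathbb R^{d-1}$ the counting function $k \mapsto l(k\triangle)$ is a polynomial in $k$ of degree $d-1$. A standard formal power series computation then shows that the generating series of any sequence given by a polynomial in $k$ of degree $\le d-1$ is a rational function of the form $P(t)/(1-t)^d$ with $\deg P \le d-1$, which is exactly the statement about $\mathcal S(\triangle, t)$.

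The heart of the argument is Part (2). The key local computation is: for a lattice $i$-simplex $\sigma$ with vertices $v_0, \ldots, v_i$ and normalized volume $1$, the condition $v_1-v_0, \ldots, v_i-v_0$ being a $\mathbb Z$-basis of the sublattice they span implies that the lattice points of $k\sigma$ are exactly the points $\sum_j b_j v_j$ with $b_j \in \mathbb Z_{\ge 0}$ and $\sum_j b_j = k$. Restricting to the relative interior means requiring $b_j \geq 1$ for all $j$, which gives $\binom{k-1}{i}$ interior lattice points for $k \ge 1$, so
\begin{equation*}
\sum_{k \ge 1} l^\circ(k\sigma)\, t^k \;=\; \frac{t^{i+1}}{(1-t)^{i+1}}.
\end{equation*}
Since any triangulation of $\triangle$ with vertices among its lattice points restricts to a triangulation of every face of $\triangle$, the interior $\triangle^\circ$ decomposes as the disjoint union of the relative interiors $\sigma^\circ$ for $\sigma \in \mathcal T$ with $\sigma \not\subset \partial\triangle$. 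Dilating and summing gives
\begin{equation*}
\sum_{k \ge 1} l^\circ(k\triangle)\, t^k \;=\; \sum_{i=0}^{d-1} a_i \, \frac{t^{i+1}}{(1-t)^{i+1}}.
\end{equation*}

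Next, I would apply Ehrhart--Macdonald reciprocity, $l^\circ(k\triangle) = (-1)^{d-1} l(-k\triangle)$ for $k \ge 1$, which in generating-function form reads
\begin{equation*}
\sum_{k \ge 1} l^\circ(k\triangle)\, t^k \;=\; \frac{t^{d}\, \mathcal S(\triangle, 1/t)}{(1-t)^{d}}.
\end{equation*}
Equating the two expressions and clearing denominators yields
\begin{equation*}
t^{d}\, \mathcal S(\triangle, 1/t) \;=\; \sum_{i=0}^{d-1} a_i\, t^{i+1} (1-t)^{d-i-1},
\end{equation*}
and substituting $t \mapsto 1/t$ and simplifying using $1 - 1/t = -(t-1)/t$ gives
\begin{equation*}
\mathcal S(\triangle, t) \;=\; \sum_{i=0}^{d-1} a_i\, (t-1)^{d-i-1} \;=\; a_{d-1} + a_{d-2}(t-1) + \cdots + a_0 (t-1)^{d-1},
\end{equation*}
as claimed.

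The argument is essentially routine once the right ingredients are assembled; there is no serious obstacle. The only delicate point is the bookkeeping around Ehrhart--Macdonald reciprocity in the substitution $t \mapsto 1/t$, and verifying that the unimodular assumption is used precisely where it is needed, namely to identify the lattice points of every $k\sigma$ explicitly so that the local Ehrhart series of $\sigma^\circ$ takes the simple form $t^{i+1}/(1-t)^{i+1}$.
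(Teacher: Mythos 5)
The paper states this proposition as ``well-known'' and gives no proof of its own (it points to the proof of [BD, Theorem 4.4] just before citing the proposition), so there is no paper argument to compare against line by line. Your proof is correct and is essentially the classical argument one finds in the literature, including in [BD]: for part (1) one appeals to Ehrhart's polynomiality theorem and the formal fact that the generating series of a degree-$(d-1)$ polynomial sequence is $P(t)/(1-t)^d$ with $\deg P\le d-1$; for part (2) one decomposes $\triangle^\circ$ as the disjoint union of relative interiors of simplices of $\mathcal T$ not contained in $\partial\triangle$, computes the interior Ehrhart series $t^{i+1}/(1-t)^{i+1}$ of a unimodular $i$-simplex from the $\mathbb Z$-basis provided by its edge vectors, and closes with Ehrhart--Macdonald reciprocity and the substitution $t\mapsto 1/t$. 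The unimodularity hypothesis enters exactly where you say it does. A minor alternative you could mention for completeness: one can avoid reciprocity entirely by using a shelling to decompose $\triangle$ into half-open unimodular simplices, computing $\mathrm{Ehr}_\triangle(t)$ directly as $\bigl(\sum_j h_j t^j\bigr)/(1-t)^d$, which identifies $\mathcal S(\triangle,t)$ with the $h$-polynomial of $\mathcal T$ and then translates into the stated $(t-1)$-expansion; but your reciprocity route is equally standard and arguably cleaner here since the coefficients $a_i$ are defined via interior simplices.
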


This concludes the preparation on posets and lattice polytopes.
We now go back to the discussion about the reflexive polytopes 
$\triangle, \triangle^\vee$ in $\mathbb R^n$.
From now on, let $\mathcal P = \mathcal P_\triangle = [\hat 0, \hat 1]$ 
be the face poset of $\triangle$
with minimal element $\hat 0 = \emptyset$, 
maximal element $\hat 1 = \triangle$, 
and rank function $\rho$. 
As pointed out in Example \ref{face poset}, 
$\mathcal P$ is a finite Eulerian poset of rank $n+1$.
There is a one-to-one correspondence 
between the faces of $\triangle$ and the faces of $\triangle^\vee$, 
which induces a canonical isomorphism between the dual poset $\mathcal P^*$ 
and the face poset of $\triangle^\vee$.
As in [BBo2], 
we use the faces of $\triangle$ as indices and denote by 
$\triangle_a$ the face of $\triangle$ corresponding to $a \in \mathcal P$, 
for example $\triangle_{\hat 0} = \emptyset$, 
$\triangle_{\hat 1} = \triangle$, 
and $\rho(a)= \dim \triangle_a +1$.
We denote by $\triangle_a^*$ the face of $\triangle^\vee$ 
that corresponds to the face $\triangle_a$ of $\triangle$.
One has $\dim \triangle_a + \dim \triangle_a^* = n-1$ for all $a \in \mathcal P$.

The cones in the fan $\Sigma_\triangle$ are in one-to-one correspondence with the faces of 
$\triangle$ except $\triangle_{\hat 1} = \triangle$, 
treating the origin as the cone over the empty set.
This allows us to identify $\Sigma_\triangle$ with the set
$[\hat 0, \hat 1) = \mathcal P\backslash \{ \hat 1 \}$.
Denote the orbit of the toric variety $X^\vee = \mathbb P_{\Sigma_{\triangle}}$ 
corresponding to $a \in [\hat 0, \hat 1)$ 
under the cone-orbit correspondence
by $X^\vee_a$, and 
its closure by $\bar X^\vee_a = \cup_{a \leq b <\hat 1} X^\vee_b$.
Also put $Y^\vee_a = Y^\vee \cap X^\vee_a$ and
$\bar Y^\vee_a = Y^\vee \cap \bar X^\vee_a$.
Similarly, we identify $\Sigma_{\triangle^\vee}$ with $(\hat 0, \hat 1]$, 
and denote the orbit of $X = \mathbb P_{\Sigma_{\triangle^\vee}}$
corresponding to $a \in (\hat 0, \hat 1]$ 
by $X_a$, and its closure by $\bar X_a = \cup_{\hat 0 <b \leq a} X_b$.
Put $Y_a = Y \cap X_a$ and 
$\bar Y_a = Y \cap \bar X_a$.

Using the new notations, we have the following decomposition for $\pi^\vee$
\begin{eqnarray}
R\pi^\vee_* \underline{\mathbb Q}_{\widetilde {X^\vee}} & \cong &
\bigoplus_{\hat 0 \leq a < \hat 1} \bigoplus_{k \in \mathbb Z}
\mathcal I_{\bar X^\vee_a}^{\oplus s_{a, k}} [- \rho(a)-k]
\label{decomposition for pi cech}
\end{eqnarray}
and
\begin{eqnarray}
P_{\pi^\vee, b} (t) & = & \sum_{\hat 0 \leq a \leq b} S_a(t) R_{a, b} (t)
\qquad \text{for    }  b \in [\hat 0, \hat 1). 
\label{dCMM 7.3'}
\end{eqnarray}
By [dCMM, Theorem 4.1, Proposition 4.6], 
$P_{\pi^\vee, b}(t)$, 
the Poincar\'{e} polynomial of 
the fiber of $\pi^\vee$ over a point in $X^\vee_b$, 
is equal to $\sum_{\ell \in \mathbb Z} d_\ell (\widetilde{X^\vee}/ b) (t^2-1)^\ell$, 
where $d_\ell(\widetilde{X^\vee}/ b)$ is the number of 
$(\rho(b)-\ell)$-dimensional cones in $\Sigma_{\triangle, \mathcal T}$ 
that are contained in $b \in \Sigma_\triangle$ 
but not in its boundary. 
Let $\mathcal T_b$ be the regular triangulation that $\mathcal T$ induces on $\triangle_b$, 
then clearly $d_\ell(\widetilde{X^\vee}/ b)$ is also the number of 
$(\rho(b)-\ell-1)$-dimensional simplices in $\mathcal T_b$
which do not belong to the boundary of $\triangle_b$.
Proposition \ref{Ehrhart} implies that
$P_{\pi^\vee, b} = \mathcal S(\triangle_b, t^2)$
with the convention
$\mathcal S(\triangle_{\hat 0}, t)= \mathcal S(\emptyset, t) = 1$.
When $\rho(b) \leq 1$, 
i.e. when $\triangle_b$ is the empty set $\emptyset$ or is a vertex of $\triangle$, 
then $\pi^\vee$ maps ${\pi^\vee}^{-1} (X^\vee_b)$ isomorphically to $X^\vee_b$
with fiber a point -
in both cases one has 
$P_{\pi^\vee, b} = \mathcal S(\triangle_b, t^2) = 1$.

It is also well-known that the local intersection cohomology Poincar\'{e} polynomial
$R_{a, b}(t)$ is equal to $G([a, b]^*, t^2)$
(see e.g. [F]).
The equation (\ref{dCMM 7.3'}) now reads
\begin{eqnarray}
\mathcal S (\triangle_b, t^2) & = & 
\sum_{\hat 0 \leq a \leq b} S_a(t) G([a, b]^*, t^2).
\nonumber
\end{eqnarray}
Lemma \ref{magical formula} immediately implies the following

\begin{prop}
\label{S tilde}
$S_a(t)$, defined to be $\sum_{k \in \mathbb Z} s_{a, k} t^{\rho(a)+k}$ in Notation \ref{multiplicity series}, for $a \in [\hat 0, \hat 1)$, 
is equal to 
$$\sum_{\hat 0 \leq c \leq a} \mathcal S(\triangle_c, t^2) (-1)^{\rho(a)-\rho(c)} G([c, a], t^2).
$$
\end{prop}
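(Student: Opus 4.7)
The plan is Möbius inversion in the incidence algebra $\mathbb I(\mathcal P, \mathbb Z[t])$, applied to the formula recorded immediately before the statement,
\[
\mathcal S(\triangle_b, t^2) \;=\; \sum_{\hat 0 \leq a \leq b} S_a(t) \, G([a,b]^*, t^2), \qquad b \in [\hat 0, \hat 1).
\]
This has the shape $\phi(b) = \sum_{a \leq b} \psi(a) h(a,b)$ with $\phi = \mathcal S(\triangle_\cdot, t^2)$, $\psi = S_\cdot(t)$, and kernel $h(a,b) := G([a,b]^*, t^2)$. Since $h(a,a) = 1$ is a unit in $\mathbb Z[t]$, the criterion recalled in the text ensures that $h$ is invertible in $\mathbb I(\mathcal P, \mathbb Z[t])$, so the Möbius inversion principle recalled just after the definition of the incidence algebra gives
\[
S_a(t) \;=\; \sum_{\hat 0 \leq c \leq a} \mathcal S(\triangle_c, t^2) \, h^{-1}(c, a).
\]

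The remaining and real content of the proposition is the explicit identification
\[
h^{-1}(a,b) \;=\; (-1)^{\rho(b)-\rho(a)} G([a,b], t^2).
\]
Lemma \ref{magical formula}, applied to an individual interval $[a, b]$ with $a < b$ (which is itself a finite Eulerian poset of positive rank), gives
\[
\sum_{a \leq c \leq b} G([a,c], t^2) \, (-1)^{\rho(b)-\rho(c)} \, G([c,b]^*, t^2) \;=\; 0.
\]
Using $\rho(b)-\rho(c) = (\rho(b)-\rho(a)) - (\rho(c)-\rho(a))$, I would factor the overall sign $(-1)^{\rho(b)-\rho(a)}$ out of the sum to rewrite this as
\[
\sum_{a \leq c \leq b} (-1)^{\rho(c)-\rho(a)} G([a,c], t^2) \, G([c,b]^*, t^2) \;=\; 0,
\]
which is precisely the vanishing of $(h^{-1} * h)(a,b)$ for $a < b$; combined with the trivial case $(h^{-1} * h)(a,a) = 1$, this shows that $h^{-1}$ is a left inverse, and hence the two-sided inverse, of $h$ in the incidence algebra.

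Substituting the resulting formula for $h^{-1}(c,a)$ into the Möbius-inverted relation produces exactly the claim. The argument is entirely formal once Lemma \ref{magical formula} is in hand; the only genuine subtlety is the sign bookkeeping, since Lemma \ref{magical formula} naturally provides the inverse of $f(a,b) = G([a,b], t^2)$, whereas the kernel $h(a,b) = G([a,b]^*, t^2)$ appearing here is the formally different sign-free version, and one must reshuffle signs via the additivity $\rho(b)-\rho(a) = (\rho(c)-\rho(a))+(\rho(b)-\rho(c))$ to match the two.
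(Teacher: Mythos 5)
Your proof is correct and is essentially the same as the paper's, which simply remarks that Lemma \ref{magical formula} immediately implies the result via Möbius inversion; you have filled in the sign-bookkeeping (re-applying Lemma \ref{magical formula} to each interval $[a,b]$ and using multiplicativity of the sign $(-1)^{\rho(\cdot)-\rho(\cdot)}$) that the paper leaves implicit.
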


\begin{remark}
\label{6.14}
The above expression is, in fact, the polynomial
$\widetilde{\mathcal S}(C_a, t^2)$ in [BoM, Definition 5.3],
where $C_a$ is a Gorenstein cone with supporting polyhedron $\triangle_a$
(see [BBo2, Definition 3.3]).
The duality property of $\widetilde{\mathcal S}(C_a, t^2)$ 
in [BoM, Remark 5.4] is, in this case, 
equivalent to the condition 
$s_{a, k} = s_{a, -k}$ in [dCMM, Theorem 5.1(i)]
\end{remark}

\begin{remark}
\label{6.15}
It is clear that $S_{\hat 0}(t)=1$, which means 
$\mathcal I_{X^\vee}$ appears exactly once as a direct summand of 
$R\pi^\vee_* \underline{\mathbb Q}_{\widetilde{X^\vee}}$.
It is also easy to check that
$S_a(t) =0$ if $\rho(a)=1$.
This means that the intersection complexes of the torus-invariant prime Weil divisors of $X^\vee$ make no appearance in the decomposition of $R\pi^\vee_* \underline{\mathbb Q}_{\widetilde{X^\vee}}$.
If $\rho(a)=2$ and $\triangle_a$ is an interval of length $m$, 
then $\mathcal S(\triangle_a, t) = 1+(m-1)t$ and
$S_a(t) = (m-1)t^2$.
\end{remark}

As in the proof of Theorem \ref{5.2}, 
the generating series 
\begin{eqnarray}
\sum_{i \geq 0} \dim (\HH^\bullet(\widetilde{X^\vee}, \mathbb Q)/ \text{ker} [\widetilde{Y^\vee}])_i \, t^i
\label{generating series}
\end{eqnarray}
is equal to 
$$
\sum_{\hat 0 \leq a < \hat 1} \left( \sum_{k \in \mathbb Z} s_{a, k} t^{\rho(a)+k} \right)
\left( \sum_i \dim (\IHH^\bullet(\bar X^\vee_a, \mathbb Q)/\text{ker} [Y^\vee])_i \, t^i \right).
$$
It follows from [F, Theorem 1.1] that
\begin{eqnarray}
\sum_i \dim \IHH^i(\bar X^\vee_a, \mathbb Q) \, t^i & = &  H([a, \hat 1]^*, t^2).
\nonumber
\end{eqnarray}
The hard Lefschetz theorem for intersection cohomology [BBD] implies that
\begin{eqnarray}
\sum_i \dim (\IHH^\bullet(\bar X^\vee_a, \mathbb Q)/\text{ker} [Y^\vee])_i \, t^i & = & 
H_{\text{Lef}}([a, \hat 1]^*, t^2).
\nonumber
\end{eqnarray}
Together with Proposition \ref{S tilde} and Lemma \ref{H_lef}, we have
\begin{eqnarray}
(\ref{generating series}) & = & 
\sum_{a \in [\hat 0, \hat 1)} 
\underbrace{
\left( \sum_{c \in [\hat 0, a]} \mathcal S(\triangle_c, t^2) (-1)^{\rho(a)-\rho(c)} G([c, a], t^2) \right)
}_{S_a(t)}
\underbrace{
\left( \frac{H([a, \hat 1]^*, t^2) - G([a, \hat 1]^*, t^2}{t^2} \right)
}_{H_{\text{Lef}}([a, \hat 1]^*, t^2)}
\nonumber
\end{eqnarray}

\begin{lemma}
\label{6.16}
$\displaystyle{\sum_{a \in [\hat 0, \hat 1)} S_a(t) \, H([a, \hat 1]^*, t^2) = \mathcal S(\triangle, t^2)}$.
\end{lemma}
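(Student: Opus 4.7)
The plan is to identify both sides of the identity with the Poincar\'{e} polynomial of $\widetilde{X^\vee_{\mathcal T}}$, thereby reducing the lemma to a standard toric-geometric calculation. On the left-hand side, applying the decomposition (\ref{decomposition for pi cech}) and taking hypercohomology on $X^\vee$ gives
\begin{eqnarray*}
H^i(\widetilde{X^\vee_{\mathcal T}}, \mathbb Q) & \cong & \bigoplus_{a<\hat 1}\bigoplus_k \IHH^{i-\rho(a)-k}(\bar X^\vee_a, \mathbb Q)^{\oplus s_{a,k}}.
\end{eqnarray*}
Taking Poincar\'{e} polynomials, and combining the definition of $S_a(t)$ from Notation \ref{multiplicity series} with the identification $\sum_i \dim \IHH^i(\bar X^\vee_a, \mathbb Q)\, t^i = H([a,\hat 1]^*, t^2)$ already invoked in the section, one obtains
\begin{eqnarray*}
\sum_{a<\hat 1} S_a(t)\, H([a,\hat 1]^*, t^2) & = & \sum_i \dim H^i(\widetilde{X^\vee_{\mathcal T}}, \mathbb Q)\, t^i.
\end{eqnarray*}

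For the right-hand side, $\widetilde{X^\vee_{\mathcal T}}$ is a smooth projective toric variety of dimension $n$ whose fan $\Sigma_{\triangle,\mathcal T}$ is complete and regular, so Danilov's formula for the Betti numbers of a smooth complete toric variety yields
\begin{eqnarray*}
\sum_i \dim H^i(\widetilde{X^\vee_{\mathcal T}}, \mathbb Q)\, t^i & = & \sum_{\sigma\in\Sigma_{\triangle,\mathcal T}} (t^2-1)^{n-\dim\sigma} \;=\; \sum_{k=0}^n d_k\,(t^2-1)^{n-k},
\end{eqnarray*}
where $d_k$ is the number of $k$-dimensional cones of $\Sigma_{\triangle,\mathcal T}$. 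On the other hand, Proposition \ref{Ehrhart}(2) gives $\mathcal S(\triangle,t^2)=\sum_{k=0}^n a_k\,(t^2-1)^{n-k}$, where $a_k$ is the number of $k$-dimensional simplices of $\mathcal T$ not contained in $\partial\triangle$. Hence the lemma reduces to showing $d_k=a_k$ for each $k$.

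The remaining point is the combinatorial identity $d_k=a_k$. Since $0$ lies in the interior of $\triangle$ and, by our definition of regular projective triangulation, is a vertex of every maximal-dimensional simplex of $\mathcal T$, the face of each maximal $n$-simplex opposite $0$ is contained in a facet of $\triangle$; consequently a simplex of $\mathcal T$ lies in $\partial\triangle$ precisely when it does not contain $0$. The map sending $\{0,v_{i_1},\ldots,v_{i_k}\}\in\mathcal T$ to the cone generated by $v_{i_1},\ldots,v_{i_k}$ (and $\{0\}$ to the zero cone) is then a bijection between $k$-simplices of $\mathcal T$ that contain $0$ and $k$-cones of $\Sigma_{\triangle,\mathcal T}$, giving $d_k=a_k$. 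This step is the only part of the argument that makes essential use of the hypothesis on $\mathcal T$; everything else is a formal consequence of the decomposition theorem together with Fieseler's and Danilov's formulas.
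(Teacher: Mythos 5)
Your proposal is correct and follows essentially the same route as the paper: both identify the left side with the Poincar\'e polynomial of $\widetilde{X^\vee_{\mathcal T}}$ via the decomposition (\ref{decomposition for pi cech}) together with Fieseler's formula $\sum_i \dim \IHH^i(\bar X^\vee_a,\mathbb Q)\,t^i = H([a,\hat 1]^*,t^2)$, then compute that Poincar\'e polynomial combinatorially and match it with $\mathcal S(\triangle,t^2)$ via Proposition \ref{Ehrhart}. The only difference is that you spell out the bijection between $k$-dimensional cones of $\Sigma_{\triangle,\mathcal T}$ and $k$-dimensional simplices of $\mathcal T$ containing $0$ (equivalently, not contained in $\partial\triangle$), a step the paper asserts without elaboration; your justification of it is correct.
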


\begin{proof}
The left-hand side is equal to
$\displaystyle{\sum_{a \in [\hat 0, \hat 1)} 
\left( \sum_{k \in \mathbb Z} s_{a, k} t^{\rho(a)+k} \right) 
\left( \sum_i \dim \mathbb H^i(\bar X^\vee_a, \mathcal I_{\bar X^\vee_a}) \, t^i \right) }$, 
which, by (\ref{decomposition for pi cech}), 
is equal to 
$\displaystyle{\sum_i \dim \mathbb H^i(X^\vee, R\pi^\vee_* \underline{\mathbb Q}_{\widetilde{X^\vee}})  \, t^i  = \sum_i \dim \HH^i(\widetilde{X^\vee}, \mathbb Q) \, t^i }$.
Since $\widetilde{X^\vee} = \mathbb P_{\Sigma_{\triangle, \mathcal T}}$
is smooth, 
the Poincar\'{e} polynomial 
$\displaystyle{ \sum_i \dim \HH^i(\widetilde{X^\vee}, \mathbb Q) \, t^i }$ is equal to 
$\displaystyle{ \sum_i a_i (t^2-1)^{n-i} }$, 
where $a_i$ is the number of $i$-dimensional cones in the fan 
$\Sigma_{\triangle, \mathcal T}$.
Since $\mathcal T$ is a regular projective triangulation of $\triangle$, 
the $a_i$ is precisely the number of $i$-dimensional simplicies in $\mathcal T$
that do not belong to the boundary of $\triangle$.
Hence, by Proposition \ref{Ehrhart}, 
we have $\displaystyle{ \sum_i a_i (t^2-1)^{n-i} = \mathcal S(\triangle, t^2)}$.
\end{proof}

\begin{lemma}
\label{6.17}
$\displaystyle{ \sum_{a \in [\hat 0, \hat 1)} S_a(t) \, G([a, \hat 1]^*, t^2) 
= \sum_{a \in [\hat 0, \hat 1)} \mathcal S(\triangle_a, t^2) (-1)^{n-\rho(a)} 
G([a, \hat 1], t^2)
}$.
\end{lemma}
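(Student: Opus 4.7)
The plan is to reduce Lemma 6.17 to Lemma \ref{magical formula} (the Stanley--Brenti identity) applied to each upper interval $[c,\hat 1]$ of $\mathcal P$.

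First I would substitute the formula of Proposition \ref{S tilde} into the left-hand side, writing
\begin{eqnarray}
\sum_{a \in [\hat 0,\hat 1)} S_a(t)\, G([a,\hat 1]^*, t^2)
&=& \sum_{a \in [\hat 0,\hat 1)} \left( \sum_{c \in [\hat 0, a]} \mathcal S(\triangle_c, t^2)\, (-1)^{\rho(a)-\rho(c)}\, G([c,a], t^2) \right) G([a,\hat 1]^*, t^2),
\nonumber
\end{eqnarray}
and swap the order of summation to obtain
\begin{eqnarray}
\sum_{c \in [\hat 0,\hat 1)} \mathcal S(\triangle_c, t^2) \sum_{a \in [c,\hat 1)} (-1)^{\rho(a)-\rho(c)}\, G([c,a], t^2)\, G([a,\hat 1]^*, t^2).
\nonumber
\end{eqnarray}
The desired identity then follows once I establish, for each $c \in [\hat 0,\hat 1)$, the local relation
\begin{eqnarray}
\sum_{a \in [c,\hat 1)} (-1)^{\rho(a)-\rho(c)}\, G([c,a], t^2)\, G([a,\hat 1]^*, t^2)
&=& (-1)^{n-\rho(c)}\, G([c,\hat 1], t^2).
\label{goal}
\end{eqnarray}

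Next I would apply Lemma \ref{magical formula} to the interval $[c,\hat 1]$, which is an Eulerian poset of rank $n+1-\rho(c) \geq 1$. That lemma gives
\begin{eqnarray}
\sum_{c \leq a \leq \hat 1} G([c,a], t^2)\, (-1)^{\mathrm{rk}[a,\hat 1]}\, G([a,\hat 1]^*, t^2) &=& 0.
\nonumber
\end{eqnarray}
Isolating the $a=\hat 1$ term (for which $G([\hat 1,\hat 1]^*, t^2)=1$ and $\mathrm{rk}[\hat 1,\hat 1]=0$) and using $\mathrm{rk}[a,\hat 1] = (n+1)-\rho(a)$, I would rewrite $(-1)^{(n+1)-\rho(a)} = (-1)^{(n+1)-\rho(c)}(-1)^{\rho(a)-\rho(c)}$ and factor out the $c$-dependent sign. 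This yields exactly (\ref{goal}) after dividing through by $(-1)^{(n+1)-\rho(c)}$.

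Substituting (\ref{goal}) back into the swapped double sum gives
\begin{eqnarray}
\sum_{a \in [\hat 0,\hat 1)} S_a(t)\, G([a,\hat 1]^*, t^2)
&=& \sum_{c \in [\hat 0,\hat 1)} \mathcal S(\triangle_c, t^2)\, (-1)^{n-\rho(c)}\, G([c,\hat 1], t^2),
\nonumber
\end{eqnarray}
which is Lemma 6.17. There is no real obstacle beyond sign bookkeeping; the only thing to watch is the degenerate case $\rho(c)=n$, where $[c,\hat 1]$ has rank $1$, the inner sum collapses to the single term $a=c$, and both sides of (\ref{goal}) evaluate to $1$, so the identity holds trivially.
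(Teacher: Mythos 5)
Your proof is correct and follows essentially the same approach as the paper: substitute Proposition \ref{S tilde}, swap the order of summation, and apply Lemma \ref{magical formula} to the interval $[c,\hat 1]$ with the $a=\hat 1$ term isolated. The sign bookkeeping and the check of the degenerate case are consistent with what the paper does more tersely.
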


\begin{proof}
The left-hand side is equal to
\begin{eqnarray}
\sum_{a \in [\hat 0, \hat 1)} 
\left( \sum_{c \in [\hat 0, a]} \mathcal S(\triangle_c, t^2) (-1)^{\rho(a)-\rho(c)} G([c, a], t^2) \right)
G([a, \hat 1]^*, t^2) 
\nonumber
\end{eqnarray}
or 
\begin{eqnarray}
\sum_{c \in [\hat 0, \hat 1)} 
\mathcal S(\triangle_c, t^2) 
\underbrace{\left( 
\sum_{a \in [c, \hat 1)} (-1)^{\rho(a)-\rho(c)} G([c, a], t^2) G([a, \hat 1]^*, t^2)
\right)}_I.
\nonumber 
\end{eqnarray}
By Lemma \ref{magical formula}, 
if we sum over the closed nontrivial interval $[c, \hat 1]$ in expression $I$, 
we get $0$,
hence $I = 0 - (-1)^{n+1-\rho(c)} G([c, \hat 1], t^2) = (-1)^{n-\rho(c)} G([c, \hat 1], t^2)$.
\end{proof}

\begin{prop}
\label{6.18}
One has
\begin{eqnarray}
\sum_{i \geq 0} \dim (\HH^\bullet(\widetilde{X^\vee}, \mathbb Q)/ \text{ker} [\widetilde{Y^\vee}])_i \, t^i
& = & 
\sum_{a \in [\hat 0, \hat 1]} 
t^{-2} (-1)^{n+1-\rho(a)} 
\mathcal S(\triangle_a, t^2) G([a, \hat 1], t^2).
\label{rank formula}
\end{eqnarray}
\end{prop}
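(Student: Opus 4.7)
My plan is to derive Proposition 6.18 as a straightforward consequence of the computation immediately preceding it together with Lemma 6.16 and Lemma 6.17, with the only care being the bookkeeping of the indexing set $[\hat 0, \hat 1]$ versus $[\hat 0, \hat 1)$ and the signs introduced by replacing $n-\rho(a)$ with $n+1-\rho(a)$.

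First, I would start from the formula already established on the last displayed line before the statement, namely
\begin{eqnarray}
\sum_{i \geq 0} \dim (\HH^\bullet(\widetilde{X^\vee}, \mathbb Q)/ \text{ker} [\widetilde{Y^\vee}])_i \, t^i
& = &
\sum_{a \in [\hat 0, \hat 1)} S_a(t) \cdot \frac{H([a, \hat 1]^*, t^2) - G([a, \hat 1]^*, t^2)}{t^2}.
\nonumber
\end{eqnarray}
I would split the right-hand side into two sums and pull out the factor $t^{-2}$:
\begin{eqnarray}
(\ref{generating series})
& = &
\frac{1}{t^2}\Bigl( \sum_{a \in [\hat 0, \hat 1)} S_a(t)\, H([a, \hat 1]^*, t^2)
- \sum_{a \in [\hat 0, \hat 1)} S_a(t)\, G([a, \hat 1]^*, t^2) \Bigr).
\nonumber
\end{eqnarray}

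Next, I would substitute Lemma \ref{6.16} for the first inner sum and Lemma \ref{6.17} for the second inner sum. This yields
\begin{eqnarray}
(\ref{generating series})
& = &
\frac{1}{t^2}\Bigl( \mathcal S(\triangle, t^2)
- \sum_{a \in [\hat 0, \hat 1)} (-1)^{n-\rho(a)} \mathcal S(\triangle_a, t^2)\, G([a, \hat 1], t^2) \Bigr)
\nonumber \\
& = &
\frac{1}{t^2} \mathcal S(\triangle, t^2)
+ \sum_{a \in [\hat 0, \hat 1)} t^{-2}(-1)^{n+1-\rho(a)} \mathcal S(\triangle_a, t^2)\, G([a, \hat 1], t^2).
\nonumber
\end{eqnarray}

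Finally, I would recognize the leading term as the contribution of $a = \hat 1$ in the sum on the right-hand side of the proposition: since $\triangle_{\hat 1} = \triangle$, $\rho(\hat 1) = n+1$, and $G([\hat 1, \hat 1], t^2) = 1$ (as the one-element poset has rank $0$), the $a = \hat 1$ term equals $t^{-2}(-1)^{0} \mathcal S(\triangle, t^2) \cdot 1 = t^{-2} \mathcal S(\triangle, t^2)$. Thus the two displayed lines combine to give exactly
\begin{eqnarray}
\sum_{a \in [\hat 0, \hat 1]} t^{-2}(-1)^{n+1-\rho(a)} \mathcal S(\triangle_a, t^2)\, G([a, \hat 1], t^2),
\nonumber
\end{eqnarray}
which is the right-hand side of (\ref{rank formula}). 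There is no genuine obstacle here; the main points of care are (i) ensuring the sign flip when passing from $(-1)^{n-\rho(a)}$ to $(-1)^{n+1-\rho(a)}$, and (ii) correctly accounting for the boundary case $a = \hat 1$ to convert the half-open index set $[\hat 0, \hat 1)$ in Lemmas \ref{6.16} and \ref{6.17} into the closed index set $[\hat 0, \hat 1]$ appearing in the statement.
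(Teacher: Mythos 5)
Your proposal follows exactly the paper's route: the paper proves Proposition \ref{6.18} by citing Lemma \ref{6.16} and Lemma \ref{6.17} and noting $\triangle_{\hat 1} = \triangle$, which is precisely the computation you spell out. Your derivation is correct; you have simply made explicit the sign bookkeeping and the absorption of the leading $t^{-2}\mathcal S(\triangle, t^2)$ term as the $a=\hat 1$ summand, both of which the paper leaves implicit.
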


\begin{proof}
This is immediate from Lemma \ref{6.16} and Lemma \ref{6.17}
(note that $\triangle_{\hat 1} = \triangle$).
\end{proof}

\begin{remark}
If $C$ is a Gorenstein cone with supporting polyhedron $\triangle$, 
then the right-hand side of (\ref{rank formula}) is 
$\displaystyle{\frac{1}{t^2} \widetilde{\mathcal S}(C, t^2)}$ in the sense of [BoM, Definition 5.3].
\end{remark}

Let $i: Y \hookrightarrow X$ be the inclusion.
Since $Y$ intersects the orbits of $X$ transversely, 
one has canonical isomorphisms
$i^* \mathcal I_X \cong \mathcal I_Y$ and
$i^! \mathcal I_X \cong \mathcal I_Y[-2]$. 
This is used to define 
\begin{eqnarray}
i^* \,\, : \,\, \IHH^k (X, \mathbb Q) & \to & \IHH^k (Y, \mathbb Q)
\nonumber
\end{eqnarray}
and the Gysin map
\begin{eqnarray}
i_! \,\, : \,\, \IHH^k (Y, \mathbb Q) & \to & \IHH^{k+2}(X, \mathbb Q)
\nonumber
\end{eqnarray}
([GM, 5.4.1, 5.4.3]).
The Gysin map is also the dual of 
$i^*: \IHH^{2n-2-k}(X, \mathbb Q) \to \IHH^{2n-2-k}(Y, \mathbb Q)$
with respect to the non-degenerate pairing on intersection cohomology
\begin{eqnarray} 
\IHH^k(X, \mathbb Q) \times \IHH^{2n-k}(X, \mathbb Q) \to \mathbb Q, & & 
\IHH^k(Y, \mathbb Q) \times \IHH^{2n-2-k}(Y, \mathbb Q) \to \mathbb Q.
\nonumber
\end{eqnarray}
The composite 
$i_! i^*: \IHH^\bullet(X, \mathbb Q) \to \IHH^{\bullet+2}(X, \mathbb Q)$
is given by the cup product $\smile\!\! [Y]$.
Since $Y$ is ample, 
the Lefschetz hyperplane theorem for intersection cohomology ([GM])
\footnote{Theorem 7.1 in [GM] is stated for a hyperplane section, 
but the proof works for an ample hypersurface $Y$ 
which is transverse to a Whitney stratification of $X$ by 
noting that the complement $X \backslash Y$ is affine.}
implies that 
$i^*: \IHH^k(X, \mathbb Q) \to \IHH^k(Y, \mathbb Q)$ 
is an isomorphism for $0 \leq k \leq n-2$,
and injective for $k=n-1$.
The hard Lefschetz theorem for intersection cohomology ([BBD]) implies that
\begin{eqnarray}
(\smile\!\! [Y])^k \,\, : \,\,  \IHH^{n-k}(X, \mathbb Q) & \to & \IHH^{n+k}(X, \mathbb Q)
\nonumber
\end{eqnarray}
is an isomorphism for all $k$, $0\leq k \leq n$.
From the weak and hard Lefschetz theorems, 
it is easy to deduce the following facts:

\begin{enumerate}[label=(\roman*)]

\item
$i^*: \IHH^k(X, \mathbb Q) \to \IHH^k(Y, \mathbb Q)$
is surjective for $n \leq k \leq 2n$; 

\item
the Gysin map 
$i_!: \IHH^k(Y, \mathbb Q) \to \IHH^{k+2}(X, \mathbb Q)$ 
is injective for $0\leq k \leq n-2$, 
surjective for $k=n-1$, 
and an isomorphism for $n \leq k \leq 2n-2$; 

\item
$\IHH^k(Y, \mathbb Q) = \text{im } i^* \oplus \text{ker } i_!$ for all $k$, 
and $\text{ker } i^* = \text{ker} (\smile\!\! [Y]: \IHH^\bullet(X, \mathbb Q) \to \IHH^{\bullet+2}(X, \mathbb Q) )$.

\end{enumerate}

It follows from the work of M. Saito ([Sa]) that 
$\IHH^k(X, \mathbb Q)$ and $\IHH^k(Y, \mathbb Q)$ both 
carry a natural pure Hodge structure of weight $k$, 
and the maps 
$i^*: \IHH^k(X, \mathbb Q) \to \IHH^k(Y, \mathbb Q)$ and
$i_!: \IHH^k(Y, \mathbb Q) \to \IHH^{k+2}(X, \mathbb Q) (1)$ 
are morphisms of pure Hodge structures.

\begin{define}
\label{vanishing intersection cohomology}
$\IHH^{n-1}_{\text{van}} (Y, \mathbb Q) := \text{ker } (i_!: \IHH^{n-1}(Y, \mathbb Q) \to \IHH^{n+1}(X, \mathbb Q)(1))$
is called the \emph{vanishing intersection cohomology} of $Y$.
It has a pure Hodge structure of weight $n-1$.
\end{define}

Let $E_{\text{int}}(X; u, v)$ be the Hodge-Deligne polynomial for the intersection cohomology of $X$, likewise one has $E_{\text{int}}(Y; u, v)$.
The reader can refer to [dCMM, Section 3] for a useful discussion about the Hodge-Deligne polynomial of a complex algebraic variety, 
and [BBo2, Section 3] for the intersection cohomology version.
It is known that
$E_{\text{int}}(X; u, v) = H(P, uv)$
([BBo2, Theorem 3.16]).

\begin{define}
\label{vanishing intersection E-polynomial}
We define the $E$-polynomial of the vanishing intersection cohomology of $Y$ as follows
\begin{eqnarray}
E_{\text{int}}^{\text{van}}(Y; u, v) & := & (-1)^{n-1} \sum_{p+q=n-1} h^{p, q}
(\IHH^{n-1}_{\text{van}}(Y, \mathbb Q)) u^p v^q.
\nonumber
\end{eqnarray}
It is clear that $E_{\text{int}}^{\text{van}}(Y; u, v)$
is a homogeneous symmetric polynomial of degree $n-1$.
\end{define}

It follows from the Lefschetz theorems for intersection cohomology that 
\begin{eqnarray}
E_{\text{int}}^{\text{van}}(Y; u, v)  & = &
E_{\text{int}}(Y; u, v) - H_{\text{Lef}}(P, uv),
\nonumber
\end{eqnarray}
([BBo2, Proposition 3.21]), 
and we have the following formula

\begin{prop}
[{[BBo2, Theorem 3.22, (9)]}]
\label{6.22}
\begin{eqnarray}
E_{\text{int}}^{\text{van}}(Y; t, 1) & = & 
\sum_{a \in [\hat 0, \hat 1]} t^{-1} (-1)^{\rho(a)} \mathcal S(\triangle_a, t) G([a, \hat 1], t).
\nonumber
\end{eqnarray}
\end{prop}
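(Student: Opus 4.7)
The plan is to combine the identity
\[
E_{\text{int}}^{\text{van}}(Y; u, v) = E_{\text{int}}(Y; u, v) - H_{\text{Lef}}(\mathcal P, uv)
\]
from [BBo2, Proposition 3.21] with a toric stratification formula for $E_{\text{int}}(Y; u, v)$, and then reorganize via M\"obius inversion on the Eulerian face poset $\mathcal P$. After specializing at $v = 1$, the task reduces to computing $E_{\text{int}}(Y; t, 1) - H_{\text{Lef}}(\mathcal P, t)$, where by Lemma \ref{H_lef} the second term equals $t^{-1}(H(\mathcal P, t) - G(\mathcal P, t))$.

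First, I would stratify $Y = \bigsqcup_{\hat 0 < a \leq \hat 1} Y_a$ by toric orbits of $X$. The $\triangle$-regularity implies that each $Y_a = Y \cap X_a$ is a smooth nondegenerate hypersurface in the torus $X_a$ of dimension $\rho(a) - 1$, and transversality gives $i^*\mathcal I_X \cong \mathcal I_Y$ compatibly with the stratification. Combining additivity of the Hodge--Deligne polynomial under the open-closed decomposition with the identification of the local intersection cohomology Poincar\'e polynomials of the $\bar X_a$ with the $G$-polynomials of intervals in $\mathcal P$ (the same input used throughout Section 6), one expresses $E_{\text{int}}(Y; u, v)$ as a sum over $a \in (\hat 0, \hat 1]$ with weights of the shape $E(Y_a; u, v) \cdot G([a, \hat 1], uv)$, modulo standard sign and degree-shift conventions arising from $\mathcal I_Y = \IC_Y[-(n-1)]$.

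Second, at $v = 1$ the Danilov--Khovanskii formula for the Euler polynomial of a nondegenerate affine torus hypersurface identifies $E(Y_a; t, 1)$ with an explicit combination of the Ehrhart data of $\triangle_a$ and the Euler polynomial of the ambient torus, schematically
\[
E(Y_a; t, 1) = t^{-1}(-1)^{\rho(a)-1}\mathcal S(\triangle_a, t) + (\text{correction built from } (1-t)^{\rho(a)-1}).
\]
Substituting into the stratification sum splits $E_{\text{int}}(Y; t, 1)$ into the desired main sum $\sum_{a \in (\hat 0, \hat 1]} t^{-1}(-1)^{\rho(a)} \mathcal S(\triangle_a, t)\, G([a, \hat 1], t)$ plus a correction assembled purely from $G$-polynomials and factors $(1-t)^{\rho(a)-1}$.

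Third, I would combine this correction with $-H_{\text{Lef}}(\mathcal P, t)$. Using the recursive definitions of $H$ and $G$ and M\"obius inversion on the Eulerian poset $\mathcal P$ (Lemma \ref{magical formula}), the combined expression collapses to the missing $a = \hat 0$ contribution $t^{-1} G([\hat 0, \hat 1], t) = t^{-1} G(\mathcal P, t)$, completing the face-sum in the statement. The principal obstacle is the sign and $t^{-1}$-bookkeeping throughout, especially the cancellation in step three via Lemma \ref{magical formula}; the mechanism parallels the manipulations carried out in the proofs of Lemmas \ref{6.16} and \ref{6.17} above.
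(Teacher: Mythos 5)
The paper does not prove Proposition \ref{6.22}: it is stated with a citation to [BBo2, Theorem 3.22, (9)], and the proof of Lemma \ref{6.25}, which generalizes it, simply defers to ``the proof of [BBo2, Theorem 3.22, (9)].'' There is no in-paper argument for you to reproduce. Your sketch does correctly reconstruct the broad shape of the Batyrev--Borisov argument: reduce to computing $E_{\text{int}}(Y; t, 1) - H_{\text{Lef}}(\mathcal P, t)$ via [BBo2, Proposition 3.21] and Lemma \ref{H_lef}; stratify $Y$ by the torus orbits $Y_a = Y \cap X_a$ for $a \in (\hat 0, \hat 1]$; use additivity of the compactly supported $E$-polynomial together with Fieseler's $G$-polynomial description of the local intersection cohomology stalks (the same input that yields $E_{\text{int}}(X; u, v) = H(\mathcal P, uv)$ when applied to $X$ itself); insert the Danilov--Khovanskii formula for the $E$-polynomial of a nondegenerate affine torus hypersurface; and resolve the resulting combinatorics by Lemma \ref{magical formula} and the recursions defining $H$ and $G$. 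Your observation that the $a = \hat 0$ term $t^{-1} G(\mathcal P, t)$ in the target sum arises entirely from the $-H_{\text{Lef}}$ piece (since $\mathcal S(\triangle_{\hat 0}, t) = 1$ and $\rho(\hat 0) = 0$) is also correct.

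That said, what you have written is a roadmap, not a proof. The Danilov--Khovanskii input in step two is left explicitly ``schematic,'' and the step-three collapse -- where the real work lies -- is asserted rather than carried out. Lemma \ref{magical formula} alone does not deliver the cancellation; one must unwind a nested double sum over $c \leq a$ coming from both the $\mathcal S$-terms and the $(1-t)$-corrections, reindex it, and match it term by term against the recursive definitions of $H$ and $G$. If you want a self-contained proof you must supply those computations; otherwise citing [BBo2], as the paper does, is the appropriate resolution.
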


We have the following formula for the rank of 
$\smile\!\! [\widetilde{Y^\vee_{\mathcal T}}]:
\HH^\bullet(\widetilde{X^\vee_{\mathcal T}}, \mathbb Q) \to 
\HH^\bullet(\widetilde{X^\vee_{\mathcal T}}, \mathbb Q)$.

\begin{theorem}
\label{general formula}
One has 
\begin{eqnarray}
\dim \HH^\bullet(\widetilde{X^\vee_{\mathcal T}}, \mathbb Q)/\text{ker} [\widetilde{Y^\vee_{\mathcal T}}] & = & 
\dim \IHH^{n-1}_{\text{van}}(Y, \mathbb Q),
\nonumber
\end{eqnarray}
that is, the number of linearly independent period integrals predicted by the HLY hyperplane conjecture for the Calabi-Yau family $\widetilde{Y}$  
is equal to the dimension of the vanishing intersection cohomology of $Y$. 
A more refined formula is 
\begin{eqnarray}
\dim (\HH^\bullet(\widetilde{X^\vee_{\mathcal T}}, \mathbb Q)/\text{ker} [\widetilde{Y^\vee_{\mathcal T}}])_{2k} 
& = &  
h^{k, n-1-k} ( \IHH^{n-1}_{\text{van}}(Y, \mathbb Q) )
\nonumber
\end{eqnarray}
independent of the regular projective triangulation $\mathcal T$ of $\triangle$.
\end{theorem}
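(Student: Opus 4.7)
The plan is to derive the theorem by directly comparing the two polynomial identities already established in the excerpt, namely Proposition \ref{6.18} on the generating series of the graded dimensions of $\HH^\bullet(\widetilde{X^\vee_{\mathcal T}}, \mathbb Q)/\text{ker}\,[\widetilde{Y^\vee_{\mathcal T}}]$, and Proposition \ref{6.22} on $E_{\text{int}}^{\text{van}}(Y; t, 1)$. Both are expressed as sums over the face poset of $\triangle$ of the same building blocks $\mathcal S(\triangle_a, \cdot)$ and $G([a,\hat 1], \cdot)$, so the theorem should fall out by matching these two formulas.

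First I would substitute $t \mapsto t^2$ in the formula of Proposition \ref{6.22}, obtaining
\begin{equation*}
E_{\text{int}}^{\text{van}}(Y; t^2, 1) \;=\; \sum_{a \in [\hat 0, \hat 1]} t^{-2} (-1)^{\rho(a)} \mathcal S(\triangle_a, t^2) G([a, \hat 1], t^2).
\end{equation*}
Comparing this with the formula of Proposition \ref{6.18}, the only difference is that the sign $(-1)^{\rho(a)}$ is replaced by $(-1)^{n+1-\rho(a)}$. Since $(-1)^{n+1-\rho(a)} = (-1)^{n+1}(-1)^{\rho(a)}$, one obtains
\begin{equation*}
\sum_{i \geq 0} \dim (\HH^\bullet(\widetilde{X^\vee_{\mathcal T}}, \mathbb Q)/\text{ker}\,[\widetilde{Y^\vee_{\mathcal T}}])_i \, t^i \;=\; (-1)^{n+1} E_{\text{int}}^{\text{van}}(Y; t^2, 1).
\end{equation*}
By Definition \ref{vanishing intersection E-polynomial}, the right-hand side equals $(-1)^{n+1}(-1)^{n-1} \sum_{p+q=n-1} h^{p,q}(\IHH^{n-1}_{\text{van}}(Y, \mathbb Q)) t^{2p} = \sum_{p+q=n-1} h^{p,q}(\IHH^{n-1}_{\text{van}}(Y, \mathbb Q)) t^{2p}$, since $(-1)^{2n}=1$.

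Equating the coefficients of $t^{2k}$ on both sides then yields the refined identity
\begin{equation*}
\dim (\HH^\bullet(\widetilde{X^\vee_{\mathcal T}}, \mathbb Q)/\text{ker}\,[\widetilde{Y^\vee_{\mathcal T}}])_{2k} \;=\; h^{k, n-1-k}(\IHH^{n-1}_{\text{van}}(Y, \mathbb Q)),
\end{equation*}
and summing over $k$ gives the unrefined identity. In particular, since the right-hand side depends only on $\triangle$ (via $Y$) and not on $\mathcal T$, the graded dimensions on the left are independent of the regular projective triangulation $\mathcal T$, as asserted. One should also check consistency: the left-hand generating series is automatically a polynomial in $t^2$ because $\widetilde{X^\vee_{\mathcal T}}$ is smooth and its cohomology is concentrated in even degrees, matching the fact that the right-hand side as derived only involves even powers of $t$.

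The argument is essentially a bookkeeping exercise once Propositions \ref{6.18} and \ref{6.22} are in hand; the bulk of the real work has been done earlier in the section through the decomposition theorem for toric fibrations, the Möbius-inversion formula for $S_a(t)$ in Proposition \ref{S tilde}, and the hard Lefschetz theorems for intersection cohomology. The only mild subtlety is tracking the sign $(-1)^{n+1}$ against the normalization convention $(-1)^{n-1}$ in the definition of $E_{\text{int}}^{\text{van}}(Y; u, v)$; this is the step where I would be most careful, but it resolves cleanly.
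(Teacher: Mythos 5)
Your proposal is correct and follows exactly the paper's own proof: compare Proposition \ref{6.18} and Proposition \ref{6.22} (after substituting $t\mapsto t^2$), observe that the signs differ by the overall factor $(-1)^{n+1}=(-1)^{n-1}$, and then unwind Definition \ref{vanishing intersection E-polynomial}. The extra sign bookkeeping you spell out is correct but is precisely what the paper's one-line chain of equalities leaves implicit.
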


\begin{proof}
It follows from Proposition \ref{6.18}, Proposition \ref{6.22}, and Definition \ref{vanishing intersection E-polynomial} that
\begin{eqnarray}
\sum_{k \geq 0} \dim (\HH^\bullet(\widetilde{X^\vee}, \mathbb Q)/ \text{ker} [\widetilde{Y^\vee}])_k \, t^k
& = & 
(-1)^{n-1} E_{\text{int}}^{\text{van}}(Y; t^2, 1) 
\quad = \quad 
\sum_{k=0}^{n-1} 
h^{k, n-1-k} ( \IHH^{n-1}_{\text{van}}(Y, \mathbb Q) ) t^{2k}.
\nonumber
\end{eqnarray}
\end{proof}

\begin{remark}
Let $i^\vee: Y^\vee \to X^\vee$ and $\widetilde{i^\vee}: \widetilde{Y^\vee} \to \widetilde{X^\vee}$
be the inclusions.
The result [M, Theorem 5.6] implies that 
there is a natural decomposition 
$\HH^\bullet(\widetilde{Y^\vee}, \mathbb Q) = \HH^\bullet_{\text{toric}} (\widetilde{Y^\vee}, \mathbb Q)
\oplus \HH^\bullet_{\text{res}}(\widetilde{Y^\vee}, \mathbb Q)$,
where the toric part 
$\HH^\bullet_{\text{toric}} (\widetilde{Y^\vee}, \mathbb Q)$
is the image of 
$\widetilde{i^\vee}^*: \HH^\bullet(\widetilde{X^\vee}, \mathbb Q) \to
\HH^\bullet(\widetilde{Y^\vee}, \mathbb Q)$, 
and the residue part is the image of the residue map
$\text{Res}: \HH^{\bullet+1} (\widetilde{X^\vee}\backslash \widetilde{Y^\vee}, \mathbb Q)
\to \HH^{\bullet}(\widetilde{Y^\vee}, \mathbb Q)$
or the kernel of the Gysin map
$\widetilde{i^\vee}_!: \HH^\bullet(\widetilde{Y^\vee}, \mathbb Q) \to \HH^{\bullet+2}(\widetilde{X^\vee}, \mathbb Q)$.
Furthermore, 
[M, Theorem 5.1] implies that
$\text{ker } \widetilde{i^\vee}^* = \text{ker } (\widetilde{i^\vee}_! \widetilde{i^\vee}^*)
= \text{ker } ( \smile\!\! [\widetilde{Y^\vee}]:
\HH^\bullet(\widetilde{X^\vee}, \mathbb Q) \to 
\HH^\bullet(\widetilde{X^\vee}, \mathbb Q) )$.
Hence, the quotient
$\HH^\bullet(\widetilde{X^\vee}, \mathbb Q)/\text{ker } [\widetilde{Y^\vee}]$
is isomorphic to the toric part $\HH^\bullet_{\text{toric}}(\widetilde{Y^\vee}, \mathbb Q)$ of 
$\HH^\bullet(\widetilde{Y^\vee}, \mathbb Q)$.
On the other hand, one may prove Theorem 5.1 and Theorem 5.6 of [M]
using the decomposition theorem and the Lefschetz theorems for intersection cohomology.
\end{remark}

Note that $Y_a$ is an affine hypersurface in the orbit 
$X_a \cong (\mathbb C^*)^{\rho(a)-1}$ of $X$.
The closure $\bar Y_a$ is an ample hypersurface in the 
$(\rho(a)-1)$-dimensional toric subvariety $\bar X_a$
and is transverse to the toric stratification of $\bar X_a$.
Similar to Definition \ref{vanishing intersection cohomology} and Definition \ref{vanishing intersection E-polynomial}, 
we define the vanishing intersection cohomology
$\IHH_{\text{van}}^{\rho(a)-2}(\bar Y_a, \mathbb Q):=
\text{ker}( (i_a)_!: \IHH^{\rho(a)-2}(\bar Y_a, \mathbb Q) \to \IHH^{\rho(a)}(\bar X_a, \mathbb Q) )$
and its $E$-polynomial 
$E_{\text{int}}^{\text{van}}(\bar Y_a; u, v) := (-1)^{\rho(a)-2}
\sum_{p+q=\rho(a)-2} h^{p, q} (\IHH_{\text{van}}^{\rho(a)-2} (\bar Y_a, \mathbb Q) )
u^p v^q$.
Then one has 
$E_{\text{int}}^{\text{van}}(\bar Y_a; u, v)=
E_{\text{int}}(\bar Y_a; u, v) - H_{\text{Lef}}([\hat 0, a], uv)$
and the following formula

\begin{lemma}
\label{6.25}
For any $a \in (\hat 0, \hat 1]$, 
one has
\begin{eqnarray}
E_{\text{int}}^{\text{van}}( \bar Y_a; t, 1) 
& = & \sum_{c \in [0, \hat a]} t^{-1} (-1)^{\rho(c)} \mathcal S(\triangle_c, t) G([c, a], t).
\nonumber
\end{eqnarray}
\end{lemma}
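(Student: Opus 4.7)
The plan is to apply Proposition \ref{6.22} to the pair $(\bar X_a, \bar Y_a)$ in place of $(X, Y)$, viewing $\bar X_a$ as the ambient Gorenstein toric variety and $\bar Y_a$ as an ample transverse hypersurface in its own right.

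First I would unpack the toric and polytope data of $\bar X_a$: it is the complete $(\rho(a)-1)$-dimensional toric variety associated to the Gorenstein cone $C_a$ over the face $\triangle_a$; its face poset (in bijection with its torus orbits) is canonically the closed interval $[\hat 0, a] \subset \mathcal P$; and for each $c \in [\hat 0, a]$, the face of $\triangle_a$ indexed by $c$ is literally $\triangle_c$. Under this identification, the interval $[c, a]$ in $[\hat 0, a]$ plays the role of $[a, \hat 1]$ in $\mathcal P$, and the Gorenstein structure on $C_a$ is precisely the one underlying the polynomial $\widetilde{\mathcal S}(C_a, t^2)$ mentioned in Remark \ref{6.14}. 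Next, I would verify that $\bar Y_a$ meets the required hypotheses: the $\triangle$-regularity of $Y$ gives transversality with every toric stratum of $\bar X_a$, and ampleness of $[\bar Y_a]$ is inherited by restriction from the ampleness of $[Y]$. With these inputs, the formula of Proposition \ref{6.22} applied to $(\bar X_a, \bar Y_a)$ with $[\hat 0, a]$ in place of $\mathcal P$ and $a$ in place of $\hat 1$ reads
$$E_{\text{int}}^{\text{van}}(\bar Y_a; t, 1) = \sum_{c \in [\hat 0, a]} t^{-1} (-1)^{\rho(c)} \mathcal S(\triangle_c, t) G([c, a], t),$$
which is the claimed identity.

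The main obstacle is that Proposition \ref{6.22}, as quoted from [BBo2], is formulated for a Calabi-Yau anticanonical hypersurface in a \emph{reflexive} ambient polytope, whereas $\triangle_a$ is only a Gorenstein face of $\triangle$ and the restriction $(-\omega_X)|_{\bar X_a}$ generally differs from $-\omega_{\bar X_a}$. To handle this I would invoke the Gorenstein-cone generalization of [BBo2, Theorem 3.22] (implicit in [BBo2] and made explicit in [BoM, Definition 5.3, Remark 5.4]): the right-hand side of the formula depends only on the face poset and Ehrhart data of the ambient Gorenstein polytope together with the ampleness and transversality of the hypersurface, not on that hypersurface being a section of $-\omega_{\bar X_a}$. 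Alternatively, I could re-derive the identity intrinsically by applying the decomposition theorem to an MPCP desingularization of $\bar X_a$, extracting the vanishing part via the hard Lefschetz theorem for intersection cohomology, and mimicking the Mobius-inversion steps behind Propositions \ref{6.18} and \ref{6.22} on the interval $[\hat 0, a]$; this route yields the formula without further external citation.
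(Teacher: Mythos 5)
Your proposal is essentially correct and matches the paper's approach. The paper's own proof is a single line — "The case $a=\hat 1$ is just Proposition \ref{6.22}. For the other values of $a$, the proof is identical to the proof of [BBo2, Theorem 3.22, (9)]" — so it too is just re-running the [BBo2] argument on the interval $[\hat 0, a]$ with the Gorenstein polytope $\triangle_a$ in place of the reflexive $\triangle$, which is exactly the second route you describe. You also correctly flag the one non-trivial point, namely that $\triangle_a$ is Gorenstein but not reflexive and that $(-\omega_X)|_{\bar X_a} \neq -\omega_{\bar X_a}$, which is why one cannot literally quote Proposition \ref{6.22} with $\triangle_a$ substituted for $\triangle$.

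One small caveat on your first route: [BoM, Definition 5.3] and [BoM, Remark 5.4] introduce the $\widetilde{\mathcal S}$-polynomial and its palindromic symmetry, but they do not in themselves give the Gorenstein-cone version of [BBo2, Theorem 3.22] that you want to invoke; in the paper the relation $\widetilde{\mathcal S}(C_a,t)/t = (-1)^{\rho(a)} E^{\text{van}}_{\text{int}}(\bar Y_a; t, 1)$ is in fact a \emph{consequence} of Lemma \ref{6.25}, derived afterwards in the remark that follows it, so citing [BoM] there would be circular. Your second route — redo the Mobius-inversion argument of [BBo2, Theorem 3.22, (9)] on $[\hat 0, a]$ using the ampleness and transversality of $\bar Y_a$ in $\bar X_a$ — is the one that actually closes the gap, and it coincides with what the paper does.
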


\begin{proof}
The case $a=\hat 1$ is just Proposition \ref{6.22}.
For the other values of $a$, 
the proof is identical to the proof of [BBo2, Theorem 3.22, (9)].
\end{proof}

\begin{remark}
Let $C$ be the cone in $\mathbb R^{n+1} = \mathbb R^n \times \mathbb R$ over the polyhedron 
$\triangle \times 1$. 
For any $a \in [\hat 0, \hat 1]$, let 
$C_a$ be the face of $C$ which is supported by $\triangle_a$; 
for instance, $C_{\hat 0} = \{0\}$, $C_{\hat 1} = C$.
Let 
\begin{eqnarray}
\widetilde{\mathcal S}(C_a, t) & := & 
\sum_{c \in [\hat 0, a]} \mathcal S(\triangle_c, t) (-1)^{\rho(a)-\rho(c)} G([c, a], t)
\nonumber
\end{eqnarray}
be the $\widetilde{\mathcal S}$-polynomials introduced in [BoM, Definition 5.3].
These polynomials play two roles.
In the range $a \in [\hat 0, \hat 1)$, 
$\widetilde{\mathcal S}(C_a, t^2)$ is equal to 
$S_a(t)=\sum_{k \in \mathbb Z} s_{a, k} t^{\rho(a)+k}$, 
the generating function for the multiplicity of $\mathcal I_{\bar X^\vee_a}[-q]$
in the decomposition of $R\pi^\vee_* \underline{\mathbb Q}_{\widetilde{X^\vee}}$
as $q$ varies
(see Remark \ref{6.14}).
By restricting the decomposition 
(\ref{decomposition for pi cech}) for $\pi^\vee: \widetilde{X^\vee} \to X^\vee$ 
to the Calabi-Yau hypersurfaces,
we obtain the following decomposition for
$\pi^\vee: \widetilde{Y^\vee} \to Y^\vee$:
\begin{eqnarray}
R\pi^\vee_* \underline{\mathbb Q}_{\widetilde {Y^\vee}} & \cong &
\bigoplus_{\hat 0 \leq a < \hat 1} \bigoplus_{k \in \mathbb Z}
\mathcal I_{\bar Y^\vee_a}^{\oplus s_{a, k}} [- \rho(a)-k].
\label{decomposition for pi cech Y}
\end{eqnarray}
So, $S_a(t)$, i.e. $\widetilde{\mathcal S}(C_a, t^2)$, 
is also the generating function for the multiplicity of $\mathcal I_{\bar Y^\vee_a}[-q]$, 
as $q$ varies, 
in the decomposition of $R\pi^\vee_* \underline{\mathbb Q}_{\widetilde{Y^\vee}}$.
In the range $a \in (\hat 0, \hat 1]$, 
Lemma \ref{6.25} implies that
$\widetilde{\mathcal S}(C_a, t)$ has no constant term and 
\begin{eqnarray}
\frac{\widetilde{\mathcal S}(C_a, t)}{t}
& = & 
(-1)^{\rho(a)} E_{\text{int}}^{\text{van}}(\bar Y_a; t, 1) 
\quad = \quad
\sum_{k=0}^{\rho(a)-2} h^{k, \rho(a)-2-k} (\IHH_{\text{van}}^{\rho(a)-2} (\bar Y_a, \mathbb Q) ) t^k.
\nonumber
\end{eqnarray}
In this capacity, 
the duality property 
of $\widetilde{\mathcal S}(C_a, t)$ in 
[BoM, Remark 5.4]
is equivalent to the symmetry of Hodge numbers
$h^{p, q} (\IHH_{\text{van}}^{\rho(a)-2} (\bar Y_a, \mathbb Q)) = 
h^{q, p} (\IHH_{\text{van}}^{\rho(a)-2} (\bar Y_a, \mathbb Q))$
for $p+q = \rho(a)-2$.
When $\rho(a)=1$, 
i.e. $\triangle_a$ is a vertex of $\triangle$,
we have seen in Remark \ref{6.15}
that $\widetilde{\mathcal S}(C_a, t) =0$,
which means that the intersection complexes of torus-invariant prime Weil divisors of $X^\vee$ 
do not appear in the decomposition of 
$R\pi^\vee_* \underline{\mathbb Q}_{\widetilde{X^\vee}}$.
In the other capacity of 
$\widetilde{\mathcal S}(C_a, t)$, 
this is due to the fact that
$Y$ does not pass through the torus-fixed points of $X$, 
i.e. $Y_a = \bar Y_a = \emptyset$ when 
$\triangle_a$ is a vertex of $\triangle$.
When $\rho(a)=2$, 
say $\triangle_a$ is an interval of length $m$, 
then $\widetilde{\mathcal S}(C_a, t)= (m-1)t$, 
$\frac{\widetilde{\mathcal S}(C_a, t)}{t} = (m-1)$.
This corresponds to the fact that 
$\bar Y_a$ is $m$ distinct points in 
$\bar X_a \cong \mathbb P^1$.
For $a=\hat 1$, 
we have 
$\frac{\widetilde{\mathcal S}(C, t)}{t} = 
(-1)^{\dim Y} E_{\text{int}}^{\text{van}}(Y; t, 1)$
and 
$\frac{\widetilde{\mathcal S}(C, t^2)}{t^2}
= \sum_k \dim \HH^k_{\text{toric}}(\widetilde{Y^\vee}, \mathbb Q) t^k$.

Taking the cohomology groups 
on both sides of 
(\ref{decomposition for pi cech Y})
yields
\begin{eqnarray}
\HH^\bullet(\widetilde{Y^\vee}, \mathbb Q) & \cong & 
\bigoplus_{a\in [\hat 0, \hat 1)} \bigoplus_{k \in \mathbb Z}
\IHH^{\bullet-\rho(a)-k} (\bar Y_a^\vee, \mathbb Q)^{\oplus s_{a, k}}, 
\label{cohomological decomposition for pi cech Y}
\end{eqnarray}
where each $\IHH^\bullet(\bar Y_a^\vee, \mathbb Q)$
splits into the direct sum of the ``toric part"
$\text{im} ( (i^\vee_a)^*: 
\IHH^\bullet(\bar X_a^\vee, \mathbb Q) \to \IHH^\bullet(\bar Y_a^\vee, \mathbb Q))$
and the vanishing part
$\IHH^{n-1-\rho(a)}_{\text{van}}(\bar Y^\vee_a, \mathbb Q) = \text{ker} (i^\vee_a)_!$.
Let $C^\vee$ be the dual cone of $C$, 
or the cone over $(\triangle^\vee, 1)$, 
and let $C_a^*$ be the face of $C^\vee$ supported by $\triangle_a^*$.
Then $E_{\text{int}}^{\text{van}}(\bar Y_a^\vee; t, 1)= 
(-1)^{n-1-\rho(a)} \frac{\widetilde{\mathcal S}(C_a^*, t)}{t}$, 
and hence
\begin{eqnarray}
E_{\text{int}}^{\text{van}}(\bar Y^\vee_a; u, v) 
& = & 
u^{n-1-\rho(a)} E_{\text{int}}^{\text{van}}(\bar Y_a^\vee; \frac{v}{u}, 1) 
\quad = \quad 
(-u)^{n+1-\rho(a)} \frac{1}{uv} \widetilde{\mathcal S}(C_a^*, \frac{v}{u})
\nonumber
\end{eqnarray}
for all $a \in [\hat 0, \hat 1)$.
The $E$-polynomial of the toric part 
$(i^\vee_a)^* \IHH^\bullet(\bar X^\vee_a, \mathbb Q)$ of 
$\IHH^\bullet(\bar Y^\vee_a, \mathbb Q)$
is given by 
$H_{\text{Lef}}([a, \hat 1]^*, uv)$.
The string-theoretic $E$-polynomial of $Y^\vee$ is just 
the Hodge-Deligne $E$-polynomial of the crepant resolution $\widetilde{Y^\vee}$.
Using [dCMM, Theorem 4.1], 
the following formula in [BoM, Theorem 7.2]
\begin{eqnarray}
E_{\text{st}}(Y^\vee; u, v) & = & 
\sum_{a \in [\hat 0, \hat 1]} (uv)^{-1} (-u)^{n+1-\rho(a)} 
\widetilde{\mathcal S}(C_a, uv)
\widetilde{\mathcal S}(C_a^*, \frac{v}{u}) 
\quad = \quad 
\frac{\widetilde{\mathcal S}(C, uv)}{uv} + \sum_{a \in [\hat 0, \hat 1)} \cdots
\nonumber \\
& = & 
\sum_{a \in [\hat 0, \hat 1)} \widetilde{\mathcal S}(C_a, uv) H_{\text{Lef}}([a, \hat 1]^*, uv) 
+ \sum_{a \in [\hat 0, \hat 1)} \widetilde{\mathcal S}(C_a, uv) 
E_{\text{int}}^{\text{van}} (\bar Y^\vee_a; u, v)
\nonumber
\end{eqnarray}
could be interpreted as a consequence of the decomposition 
(\ref{decomposition for pi cech Y}).
[BoM, Theorem 8.3] has a similar interpretation in terms of the decomposition theorem.
\end{remark}

\begin{remark}
\label{6.27}
Theorem \ref{5.2} follows as a corollary to Theorem \ref{general formula}.
When $X = \mathbb P^n$, 
$Y$ is a smooth hypersurface of degree $n+1$ in the projective space.
The (vanishing) intersection cohomology of $Y$ is just the ordinary 
(vanishing) cohomology of $Y$, 
hence $h^{k, n-1-k}(\IHH^{n-1}_{\text{van}}(Y, \mathbb Q)) = h^{k, n-1-k} \HH^{n-1}_{\text{van}}(Y, \mathbb Q)$.
Following from works of Griffiths [G], 
the vanishing cohomology of a sufficiently ample hypersurface $Y$ in a smooth variety $X$ can be realized as the residues of meromorphic differential forms with poles along $Y$.
When $Y$ is a smooth hypersurface of degree $d$ in $\mathbb P^n$, 
defined by the equation $f(z_0, z_1, \ldots, z_n) =0$, 
the residue map induces an isomorphism
$$R_f^{pd-n-1} \cong \HH_{\text{van}}^{n-p, p-1}(Y)$$ 
where $R_f^{pd-n-1}$ is the degree $(pd-n-1)$ component of the Jacobian ring
$$R_f = \mathbb C[z_0, z_1, \ldots, z_n]/(\frac{\partial f}{\partial z_0}, \frac{\partial f}{\partial z_1}, \ldots, \frac{\partial f}{\partial z_n})$$
(see [V, Corollary 6.12, Remark 6.8]).
Let $Y$ be the Fermat hypersurface
$z_0^{n+1} + z_1^{n+1} + \cdots + z_n^{n+1}=0$, 
then one has 
$\dim \HH_{\text{van}} ^{n-1-k, k}(Y) = \dim R^{k(n+1)}$
where $R = \mathbb C[z_0, z_1, \ldots, z_n]/(z_0^n, z_1^n, \ldots, z_n^n)$.
The dimension of $R^{k(n+1)}$ is clearly equal to the number 
$a(k(n+1))$ defined in Definition \ref{a(s)}.
\end{remark}

Theorem \ref{general formula} implies that, 
if the hyperplane conjecture is true, 
then the rank of the period sheaf associated to the universal family of smooth Calabi-Yau hypersurfaces in $\widetilde X$ 
is equal to the dimension of the vanishing intersection cohomology 
of a $\triangle$-regular Calabi-Yau hypersurface $Y$ in $X$.
Recall that 
if $Y$ is defined by $f$, then 
the residue of $\frac{\omega}{f}$ defines a cohomology class 
in $\HH^{n-1, 0}(\widetilde Y_f, \mathbb C)$
(see Section 2).
Using the decomposition theorem,
one could show that it in fact 
defines an intersection cohomology class in 
$\IHH^{n-1, 0} (Y_f, \mathbb C)$.
This will be covered in a future paper
when we use the decomposition theorem and the Radon transform
\footnote{This was suggested to the second author by Zhiwei Yun.}
to study the period sheaf of Calabi-Yau hypersurfaces in $\widetilde X$.

\section{Appendix}

Let $\triangle$ be the polytope in $\mathbb R^n$ with vertices
$$(n,-1, \ldots,-1), \,\, (-1, n, \ldots,-1), \,\, \ldots, \,\, (-1, \ldots,-1, n), \,\, (-1,-1, \ldots,-1).$$
We will use repeated \emph{star subdivision} to show that 
$\triangle$ admits a regular projective triangulation.

Recall the definition of star subdivision and its basic properties from [CLS, 11.1].
Let $N$ be a lattice, 
$\Sigma$ be a fan in $N_{\mathbb R}$ 
consisting of strongly convex rational polyhedral cones, 
and let $| \Sigma | = \cup_{\sigma \in \Sigma} \sigma$ be the support of $\Sigma$.
The pair $(N, \Sigma)$ defines a toric variety $X_\Sigma$.
Given a primitive element $v \in |\Sigma| \cap N$, 
let $\Sigma^*(v)$ be the set of the following cones:

(a) $\sigma$, where $v \notin \sigma \in \Sigma$, 

(b) the cone generated by $\tau$ and $v$, where $v \notin \tau \in \Sigma$ and $\{ v \} \cup \tau \subset \sigma \in \Sigma$.

\noindent
The new fan $\Sigma^*(v)$ is called the star subdivision of $\Sigma$ at $v$.
It has the following properties:

(i) $\Sigma^*(v)$ is a refinement of $\Sigma$,

(ii) the induced toric morphism $X_{\Sigma^*(v)} \to X_\Sigma$ is projective.

Using the lattice points in $\partial \triangle$, 
we perform a sequence of star subdivisions on $\Sigma_\triangle$:
$$
\Sigma_\triangle = \Sigma_\triangle^0 \dashrightarrow \Sigma_\triangle^1 \dashrightarrow \Sigma_\triangle^2 \dashrightarrow \cdots
\dashrightarrow \Sigma_\triangle^N.
$$
Since each morphism $X_{\Sigma_\triangle^i} \to X_{\Sigma_\triangle^{i-1}}$ is projective, 
so is their composition $X_{\Sigma_\triangle^N} \to X_{\Sigma_\triangle}$.
Since $X_{\Sigma_\triangle}$ is projective, 
it follows that $X_{\Sigma_\triangle^N}$ is projective.
We also want $X_{\Sigma_\triangle^N}$ to be smooth.
To this end, 
we describe below an order in which to choose the lattice points of $\partial \triangle$ so that the corresponding iterated star subdivisions induce a regular triangulation of $\partial \triangle$.

Let $\triangle_k[m]$ denote $m$ times a $k$-simplex of normalized volume 1.
In this notation, the polytope $\triangle$ is $\triangle_n[n+1]$
with faces
$\triangle_k[n+1]$, $0 \leq k \leq n-1$, 
and a unique interior integral point.
Each face $\triangle_k[n+1]$ contains in its interior a copy of $\triangle_k[n-k]$
with integral vertices.

Consider a simplex $\triangle_k[m]$ with vertices $0, me_1, me_2, \ldots, me_k$, 
where $e_1, \ldots, e_k$ is the standard basis of $\mathbb R^k$, 
and $m \geq k+2$.
Put $v_0 =0$, $v_i = m e_i$ for $1 \leq i \leq k$,
and write $\triangle_k[m] = \langle v_0, v_1, \ldots, v_k \rangle$.
Let $w_0 = e_1 + \cdots + e_k$, and 
$w_i = e_1 + \cdots + e_{i-1} + (m-k) e_i + e_{i+1} + \cdots + e_k$
for $1 \leq i \leq k$.
These are integral points in the interior of $\triangle_k[m]$
whose convex hull is a copy of $\triangle_k[m-k-1]$ 
containing all integral interior points of $\triangle_k[m]$.
We use the sequence $w_0, w_1, \ldots, w_k$ to subdivide $\triangle_k[m]$.
For example, 
$w_0$ divides $\triangle_k[m]$ into $(k+1)$-number of $k$-simplices:
$\langle w_0, v_0, v_1, \ldots, \hat v_i, \ldots, v_k \rangle$, $1 \leq i \leq k$,
and $\langle w_0, v_1, \ldots, v_k \rangle$.
The last one contains $w_1$ as an interior point which further splits into the union of 
$(k+1)$-number of $k$-simplices:
$\langle w_1, v_1, \ldots, v_k \rangle$, 
$\langle w_0, w_1, v_1, v_2, \ldots, \hat v_i, \ldots, v_k \rangle$ for $2 \leq i \leq k$, 
and $\langle w_0, w_1, v_2, \ldots, v_k \rangle$.
Repeat this and we get a subdivision of $\triangle_k[m]$ into 
$(k(k+1)+1)$-number of $k$-simplices:
$$
\langle w_0, \ldots, \hat w_i, \ldots, w_{r-1}, w_r, v_r, \ldots, v_k \rangle, 
$$
$$
\langle w_0, \ldots,  w_r, v_r, v_{r+1}, \ldots, \hat v_j, \ldots, v_k \rangle, 
$$
for $0 \leq i \leq r-1$, $r+1 \leq j \leq k$, $0 \leq r \leq k$, 
and $\triangle_k[m-k-1] = \langle w_0, w_1, \ldots, w_k \rangle$.
Each $$
\langle w_0, \ldots, \hat w_i, \ldots, w_{r-1}, w_r, v_r, \ldots, v_k \rangle
= \langle w_0, \ldots, \hat w_i, \ldots, w_{r-1}, w_r \rangle \ast \langle v_r, \ldots, v_k \rangle
$$
is the join of a face of the smaller simplex $\triangle_k[m-k-1]$
and a face of the bigger one $\triangle_k[m]$.
Moreover, it has the property that its normalized volume equals the product of the normalized volumes of the two faces, i.e. 
$$\text{vol} \langle w_0, \ldots, \hat w_i, \ldots, w_{r-1}, w_r, v_r, \ldots, v_k \rangle
= \text{vol} \langle w_0, \ldots, \hat w_i, \ldots, w_{r-1}, w_r \rangle \cdot 
\text{vol} \langle v_r, \ldots, v_k \rangle.
$$
Hence, a triangulation of $\langle w_0, \ldots, \hat w_i, \ldots, w_{r-1}, w_r, v_r, \ldots, v_k \rangle$ which induces regular triangulations of 
$\langle w_0, \ldots, \hat w_i, \ldots, w_{r-1}, w_r \rangle$ and
$\langle v_r, \ldots, v_k \rangle$
must be regular.
The same is true for the other family of simplices.
Next, we triangulate the interior simplex $\triangle_k[m-k-1]$ as follows.

Let $\triangle_k[l]$, $l \geq 1$, be a simplex with vertices $0, le_1, \ldots, le_k$.
First we use the sequence 
$$(l-1)e_1, \ldots, (l-1) e_k, (l-2) e_1, \ldots, (l-2) e_k, \ldots, e_1, \ldots, e_k$$
to get a triangulation of $\triangle_k[l]$ into $(k(l-1)+1)$-number of $k$-simplices:
$$\langle i e_1, \ldots, i e_j \rangle \ast \langle (i+1) e_j, \ldots, (i+1) e_k \rangle, 
$$
$1 \leq i \leq l-1$, $1\leq j \leq k$, 
and $\langle 0, e_1, \ldots, e_k \rangle$.
Note that
$\langle i e_1, \ldots, i e_j \rangle \ast \langle (i+1) e_j, \ldots, (i+1) e_k \rangle$, 
the join of a face of $\triangle_{k-1}[i]= \langle ie_1, \ldots, ie_k \rangle$ and
a face of $\triangle_{k-1}[i+1] = \langle (i+1) e_1, \ldots, (i+1) e_k \rangle$, 
also satisfies the property that
$$
\text{vol} ( \langle i e_1, \ldots, i e_j \rangle \ast \langle (i+1) e_j, \ldots, (i+1) e_k \rangle )
= \text{vol} \langle i e_1, \ldots, i e_j \rangle \cdot
\text{vol} \langle (i+1) e_j, \ldots, (i+1) e_k \rangle.
$$
Hence, regular triangulations of the $(k-1)$-simplices
$\langle ie_1, \ldots, ie_k \rangle$, $2 \leq i \leq l$
will induce a regular triangulation of $\langle 0, le_1, \ldots, le_k\rangle$.
One proceeds with induction on $k$.

Back to the polytope $\triangle$.
First, we use
the vertices of the simplices $\triangle_{n-1}[1]$ contained in the interiors
of the codim 1 faces $\triangle_{n-1}[n+1]$ for making star subdivisions, 
next we use the vertices of the simplices $\triangle_{n-2}[2]$ 
in the interiors of the codim 2 faces $\triangle_{n-2}[n+1]$
and then the integral points of $\triangle_{n-2}[2]$ 
to carry out subdivisions as described in the previous two paragraphs, 
after that, we move on to codim 3 faces, etc.
In the end, this yields a regular projective triangulation of $\triangle$.

\end{document}